\newtheorem{theorem}{Theorem}[section]
\newtheorem{corollary}[theorem]{Corollary}
\newtheorem{lemma}[theorem]{Lemma}
\newtheorem{proposition}[theorem]{Proposition}
\theoremstyle{definition}
\newtheorem{definition}[theorem]{Definition}
\theoremstyle{remark}
\newtheorem{remark}[theorem]{Remark}
\newtheorem{example}[theorem]{Example}
\definecolor{mr}{rgb}{0.1,0.2,0.7}
\newcommand{\tadeusz}{\color{magenta}}
\newcommand{\normal}{\color{black}}
\newcommand{\calA}{\mathcal{A}}
\newcommand{\calB}{\mathcal{B}}
\newcommand{\calF}{\mathcal{F}}
\newcommand{\tcalF}{\tilde{\mathcal{F}}}
\newcommand{\calFd}{\mathcal{F}^{(d)}}
\newcommand{\tcalFd}{\tilde{\mathcal{F}}^{(d)}}
\newcommand{\tcalFdd}{\tilde{\mathcal{F}}^{(d+2)}}
\newcommand{\V}{L}
\newcommand{\R}{\mathds{R}}
\newcommand{\N}{{\mathds{N}}}
\newcommand{\RR}{\mathrm{I\kern-0.20emR}}
\newcommand{\D}{\mathrm{d}\kern0.2pt}
\newcommand{\ve}{{\varepsilon}}
\DeclareMathOperator{\supp}{supp}
\DeclareMathOperator{\dist}{dist}
\title[Gradient estimates of harmonic functions for L{\'e}vy processes]{Gradient estimates of harmonic functions and transition densities for L{\'e}vy processes}
\author[T. Kulczycki]{Tadeusz Kulczycki}
\author[M. Ryznar]{Micha{\l} Ryznar}
\thanks{The research was supported in part by NCN grant no. 2011/03/B/ST1/00423.}
\address{Institute of Mathematics and Computer Science, Wroc{\l}aw University of Technology, Wyb. Wyspia{\'n}skiego 27, 50-370 Wroc{\l}aw, Poland.}
\email{Tadeusz.Kulczycki@pwr.wroc.pl}
\email{Michal.Ryznar@pwr.wroc.pl}
\begin{document}
%\sloppy
%\footnotetext{}
\begin{abstract}
We prove gradient  estimates for harmonic functions with respect to a $d$-dimensional  unimodal pure-jump L\'evy process under some mild assumptions on the  density of its L\'evy measure. These assumptions allow for a construction of an unimodal L\'evy process in $\R^{d+2}$ with the same characteristic exponent as the original process. The relationship between the two processes provides a fruitful source of gradient estimates of transition densities. We also construct another process called a difference process which is very useful in the analysis of differential properties of harmonic functions. Our results extend the gradient estimates from \cite{BKN2002} to a wide family of isotropic pure-jump process including a large class of subordinate Brownian motions.  
\end{abstract}

\maketitle

\section{Introduction}
The main purpose of this paper is to investigate the growth properties of a gradient of functions which are harmonic with respect to some isotropic L{\'e}vy processes in $\R^d$. Another aim is to obtain gradient estimates of transition densities of these processes. Our main result concerning a gradient of harmonic functions is the following theorem.

\begin{theorem}
\label{main}
Let $X$ be an isotropic L{\'e}vy process in $\R^d$ satisfying assumptions (A) (formulated below). Let $D \subset \R^d$ be an open, nonempty set and let $f: \R^d \to [0,\infty)$ be a function which is harmonic with respect to $X$ in $D$. Then $\nabla f(x)$ exists for any $x \in D$ and we have
\begin{equation}
\label{maineq}
|\nabla f(x)| \le c\frac{f(x)}{\delta_D(x) \wedge 1}, \quad \quad x \in D, 
\end{equation} 
where $\delta_D(x) = \dist(x,\partial D)$ and $c$ is a constant depending only on the process $X_t$.
\end{theorem}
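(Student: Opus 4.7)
\smallskip

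\noindent\textbf{Proof plan.}

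The overall plan is to obtain $\nabla f(x)$ by differentiating a Poisson-type representation of $f$ on a small ball centred at $x$. Fix $x\in D$, set $r=(\delta_D(x)\wedge 1)/4$, and let $B=B(x,2r)$. Since $f\ge 0$ is harmonic with respect to $X$ in $D\supset B$, the Ikeda--Watanabe formula gives
\begin{equation}
\label{planrep}
f(y)=\int_{\overline{B}^c} f(z)\, K_B(y,z)\, dz, \qquad y\in B(x,r),
\end{equation}
where $K_B(y,z)=\int_B G_B(y,w)\,\nu(z-w)\,dw$, $G_B$ is the Green function of $B$, and $\nu$ is the density of the L\'evy measure of $X$. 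If I can justify differentiation under the integral sign and prove
\begin{equation}
\label{planbd}
|\nabla_y K_B(y,z)|\le \frac{c}{r}\, K_B(y,z), \qquad y\in B(x,r),\ z\in\overline{B}^c,
\end{equation}
then integrating against $f(z)\ge 0$ in \eqref{planrep} yields \eqref{maineq} immediately.

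The first key input is a gradient bound on the free transition density $p_t$ of $X$. Assumption (A) is tailored so that one can construct an isotropic pure-jump L\'evy process $\tilde X$ in $\R^{d+2}$ sharing the characteristic exponent of $X$; this amounts, in the radial Fourier picture, to two formal differentiations. One then has an explicit identity of the shape $\nabla p_t(w)=-2\pi w\,\tilde p_t(w)$, and standard unimodal upper estimates for $\tilde p_t$ furnish quantitative control on $|\nabla p_t(w)|$ at every time scale. Writing $G_B=\int_0^\infty p_t^B\, dt$ for the killed semigroup and using dominated convergence, this control lifts to a gradient bound on $G_B$.

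The second key input is the \emph{difference process} advertised in the abstract, used to propagate the free-space estimate to $G_B$ (equivalently to $K_B$) with the correct factor $1/r$. To bound the increment $K_B(y+h,z)-K_B(y,z)$ for small $h$, I would couple $X$ started from $y$ with an independent copy started from $y+h$ and study $Y_t=(y+X_t^{(1)})-(y+h+X_t^{(2)})$. Under assumption (A) the process $Y$ is a well-behaved isotropic L\'evy process whose small-time behaviour is computable, and a standard hitting-time estimate shows that the probability of the two coupled paths disagreeing on $\{\tau_B>t\}$ is controlled by $|h|/r$. Combined with the gradient estimate on $p_t$, this simultaneously delivers the differentiability of $K_B(\cdot,z)$ at $y$ and the $1/r$ scaling in \eqref{planbd}.

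The main obstacle is precisely this transfer from free-space gradient estimates to estimates for the killed object $G_B$. The $(d+2)$-dimensional lift gives the right control on $|\nabla p_t|$ and the difference process gives the right control on increments, but reconciling the boundary effect of killing at $\partial B$ with the scaling $1/r$ — under only the mild assumptions (A), hence without explicit Poisson kernel formulas as in the stable case treated in \cite{BKN2002} — is the crux. Once \eqref{planbd} is established, dominated convergence applied to \eqref{planrep} yields both the existence of $\nabla f(x)$ and the estimate \eqref{maineq}.
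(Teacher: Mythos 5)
Your outline --- differentiate the Ikeda--Watanabe representation of $f$ on a small ball and prove the scale-invariant bound $|\nabla_y K_B(y,z)|\le (c/r)K_B(y,z)$ --- is the right shape, and you have correctly flagged the two essential inputs: the $(d+2)$-dimensional lift of Theorem~\ref{dertransition} to control $|\nabla p_t|$, and a ``difference process'' to transfer that control through the killing. But the difference process you describe is not the one the paper uses, and as written your version would fail. With \emph{independent} copies $X^{(1)},X^{(2)}$ the process $Y_t=(y+X^{(1)}_t)-(y+h+X^{(2)}_t)$ is a L\'evy process started at $-h$ with doubled L\'evy measure; it a.s.\ never hits $0$, so ``the probability of the two paths disagreeing'' is identically $1$ and your claimed $|h|/r$ estimate has no content. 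If you instead meant a \emph{synchronous} coupling $X^{(2)}_t=X^{(1)}_t$, then $Y_t\equiv -h$ and the whole increment of $K_B$ lives in the mismatch of the two exit times from $B$; controlling that mismatch near $\partial B$ with the $1/r$ scaling is precisely the hard part and is not resolved by a ``standard hitting-time estimate.''

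The paper's difference process is a method-of-images object, not a coupling: it is the Hunt process on $\R^d_+$ with sub-Markov transition density $\tilde p_t(x,y)=p_t(x-y)-p_t(\hat x-y)$, where $\hat x$ reflects $x$ across $\{x_1=0\}$ (Section~4), and for a symmetric ball $B$ it yields $\tilde G_{B_+}(x,y)=G_B(x,y)-G_B(\hat x,y)$. The crux, Proposition~\ref{Lipschitz}, is the antisymmetrization identity
$$
f(x)-f(\hat x)=\int_{B_+}\tilde G_{B_+}(x,y)\bigl(g(y)-g(\hat y)\bigr)\,dy,\qquad g(y)=\int_{B^c} f(z)\nu(y-z)\,dz,
$$
for $x=he_1$, combined with sharp upper bounds on $\tilde G_{B_+}$ (Proposition~\ref{Greenhatest}, Lemma~\ref{tildeGreen}) that come from the $(d+2)$-lift and the potential-kernel estimates, the Lipschitz estimate for $g$ (Lemma~\ref{Lipschitzg}), Lemma~\ref{Greenintegral}, and the Harnack inequality; the integral is split into three pieces and each is bounded by $chf(0)/r$. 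That finite-difference inequality is then applied to $G_B(\cdot,y)$, which is harmonic off its pole, to get Proposition~\ref{derGreen}, while existence of $\nabla f$ is established separately by dominated convergence (Lemma~\ref{dermeanexit}, Proposition~\ref{existence}). The reflection identity is exactly what cancels the worst part of the singularity of $\nabla_y G_B(y,w)$ as $w\to y$ before integrating in $w$; a direct attack on $\nabla_y K_B$, as you propose, would have to confront that singularity and the boundary behaviour of $G_B$ head-on, which is the obstruction you yourself identified but did not resolve.
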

The proof of this result is based on a new observation about gradient of transition densities for L{\'e}vy processes (see Theorem \ref{dertransition}) and a new concept of a difference process (see Section 4). It also uses recent results of P. Kim and A. Mimica \cite{KM2012} and K. Bogdan, T. Grzywny and M. Ryznar \cite{BGR2013}, \cite{BGR2013_1}, \cite{G2013}. The dependence of the constant $c$ in Theorem \ref{main} on the process $X_t$  will be further clarified in Remark \ref{constants}.

\begin{remark}
We  use a convention that for a radial function $f:\R^d \to \R$ we  write $f(x) = f(r)$, if $x \in \R^d$ and $|x| = r$.
\end{remark}

{\bf{Assumptions (A).}}

(H0) $X = (X_t, t \ge 0)$ is a pure-jump isotropic L{\'e}vy process in $\R^d$ with the characteristic exponent $\psi$ (i.e. $E^0 e^{i\xi X_t} = e^{-t\psi(\xi)}$). We assume that its L{\'e}vy measure is infinite and has the density $\nu(x) = \nu(|x|)$.

(H1) $\nu(r)$ is nonincreasing, absolutely continuous such that $-\nu'(r)/r$ is nonincreasing,  satisfies $\nu(r) \le a_1 \nu(r+1), r \ge 1$ and $\nu(r) \le a_1 \nu(2r), 0<r \le 1$ for some constant $a_1$.

(H2) The scale invariant Harnack inequality holds for the process $X$ (for the precise definition see Preliminaries).

\vskip 6pt

The derivative $\nu'(r)$ is understood as a function (defined a.e. on $(0, \infty)$) such that  $\nu(r) = -\int_r^\infty\nu'(\rho) d\rho,\,  r>0 $.
In fact, under the assumption that $-\nu'(r)/r$ is nonincreasing on the set where it is
defined, we can always take a version which is well defined for each point $r > 0$ and
$-\nu'(r)/r$ is nonincreasing on $(0, \infty)$. Throughout the whole paper we use that meaning
of $\nu'(r)$.
Note also that if $\nu(r)$ is convex then $-\nu'(r)/r$ is nonincreasing (in the above sense). 

Observe that the condition (H2) is also necessary for the gradient estimate (\ref{maineq}), since (\ref{maineq}) implies the scale invariant Harnack inequality.

The next result exhibits  some examples of classes of processes which satisfy assumptions (A). Before its formulation  we  introduce the definition of {\it{weak lower scaling condition}} (cf. \cite{BGR2013}). Let $\varphi$ be a non-negative, non-zero function on $[0,\infty)$. We say that $\varphi$ satisfies {\it{weak lower scaling condition}} WLSC($\underline{\alpha},\theta_0,\underline{C}$) if there are numbers $\underline{\alpha} > 0$, $\theta_0 \ge 0$ and $\underline{C} > 0$ such that
$$
\varphi(\lambda \theta) \ge \underline{C} \lambda^{\underline{\alpha}} \varphi(\theta), \quad \text{for} \quad \lambda \ge 1, \, \theta \ge \theta_0.
$$

\begin{proposition}
\label{classes} 
Let us consider the following conditions:

\vskip 2pt

{\bf{Assumptions $(\mathbf{A1})$.}} We assume (H0), (H1) and 

(H3) $\psi$ satisfies WLSC($\underline{\alpha},\theta_0,\underline{C}$).

\vskip 2pt

{\bf{Assumptions $(\mathbf{A2})$.}} We assume 

(H4) $X = (X_t, t \ge 0)$ is a subordinate Brownian motion, that is $X_t = B_{S_t}$, where $B = (B_t, t \ge 0)$ is the Brownian motion in $\R^d$ (with the generator $\Delta$) and $S = (S_t, t \ge 0)$ is a subordinator independent of $B$. The L{\'e}vy measure of $S$ is infinite.

(H5) The potential measure of $S$ has a decreasing density.

(H6) The L{\'e}vy measure of $S$ is infinite and has a decreasing density $\nu_S(r)$. 

(H7) There exist constants $\delta \in (0,1]$, $\theta_0 > 0$, $\overline{C}$ such that the Laplace exponent $\phi$ of $S$ satisfies
$$
\frac{\phi'(\lambda \theta)}{\phi'(\theta)} \le \overline{C} \lambda^{-\delta}, \quad \text{for} \quad \lambda \ge 1, \, \theta \ge \theta_0.
$$

(H8) The density of the L{\'e}vy measure $\nu(x) = \nu(|x|)$ of the process $X$ satisfies $\nu(r) \le a_1 \nu(r+1), r \ge 1$, for some constant $a_1 \ge 1$.

(H9) $d \ge 3$.

\vskip 2pt

{\bf{Assumptions (A3).}} We assume (H4), (H7) and 

(H10) The Laplace exponent $\phi$ of $S$ is a complete Bernstein function.

\vskip 2pt
Assumptions (A1) or (A2) or (A3) imply assumptions (A).
\end{proposition}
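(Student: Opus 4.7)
The plan is to establish the three implications $(\mathbf{A}i)\Rightarrow(\mathbf{A})$ for $i=1,2,3$ separately, verifying (H0), (H1), (H2) in each case. Since (A1) differs from (A) only by replacing the Harnack inequality (H2) with the weak lower scaling (H3), my strategy is to dispatch $(\mathbf{A1})\Rightarrow(\mathbf{A})$ first and then funnel (A2) and (A3) into the (A1) framework by checking that the subordinate Brownian motion structure implies (H0), (H1) and a scaling condition on $\psi$.

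For (A1), (H0) and (H1) are part of the hypothesis, so only the Harnack inequality (H2) requires proof. Under isotropy, unimodality, and WLSC of $\psi$, the scale-invariant Harnack inequality is precisely what is established by Bogdan, Grzywny and Ryznar in \cite{BGR2013, BGR2013_1, G2013}, and I would simply invoke those results.

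For (A2), (H0) follows from (H4) together with the infinite L\'evy measure granted by (H6), and I would use the subordination formula
\[
\nu(x)=\int_0^\infty (4\pi t)^{-d/2}\, e^{-|x|^2/(4t)}\,\nu_S(t)\,dt
\]
to exhibit a radial, absolutely continuous, nonincreasing density. Differentiating under the integral would give
\[
-\frac{\nu'(r)}{r}=\int_0^\infty (4\pi t)^{-d/2}(2t)^{-1}\, e^{-r^2/(4t)}\,\nu_S(t)\,dt,
\]
manifestly nonincreasing in $r$, which handles the monotonicity part of (H1). The large-$r$ doubling is exactly (H8). For the small-$r$ doubling $\nu(r)\le a_1\nu(2r)$ on $(0,1]$, I would use sharp two-sided estimates of the form $\nu(r)\asymp r^{-d}\phi(r^{-2})$ valid in dimension $d\ge 3$ under (H6) (this is where (H9) enters), combined with the doubling of $\phi$ at infinity coming from (H7). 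Finally, (H7) translates into WLSC for $\phi$ and hence for $\psi(\xi)=\phi(|\xi|^2)$, so case (A1) supplies (H2).

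For (A3), the complete Bernstein assumption (H10) yields via the Stieltjes representation both a decreasing density $\nu_S$ (so (H6) holds) and a decreasing density of the potential measure (so (H5) holds), while sharp two-sided estimates of $\nu(r)$ in terms of $\phi(r^{-2})$ are available without dimension restriction in the complete Bernstein setting. Combined with (H7), this delivers both small- and large-scale doubling, making (H8) and (H9) redundant, so (A3) reduces to the mechanism of (A2). The hard part throughout will be the small-scale doubling of $\nu$: unlike (A1), there is no scaling assumption placed directly on $\psi$, so one must pass through the scaling of $\phi'$ in (H7) and sharp estimates of $\nu(r)$ through $\phi$, which is where \cite{KM2012} and the decreasing L\'evy density of $S$ (and in (A2) the restriction $d\ge 3$) are indispensable.
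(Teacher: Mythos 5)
Your treatment of (A1) is sound in outline (cite BGR/Grzywny for the Harnack inequality), though you omit the dimensional subtlety the paper handles: \cite[Theorem 1]{G2013} covers $d\ge 3$, and for $d\le 2$ the paper must first pass to the $(d+2)$-dimensional process via Theorem \ref{dertransition} and then invoke \cite[Corollary 6]{G2013}. That detail is not cosmetic, since the gradient estimate is claimed in all dimensions.

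The serious gap is your plan to funnel (A2) and (A3) into (A1). You assert that ``(H7) translates into WLSC for $\phi$ and hence for $\psi$.'' This is false, and the failure is not peripheral: it defeats the whole reduction. Condition (H7) is an \emph{upper} bound $\phi'(\lambda\theta)\le \overline{C}\lambda^{-\delta}\phi'(\theta)$; it controls how fast $\phi'$ decays, but places no lower bound on the growth of $\phi$. A concrete counterexample is the geometric stable subordinator $\phi(\lambda)=\log(1+\lambda^{\beta/2})$ (Example 2 of this very paper): here $\phi'(\lambda)\sim (\beta/2)\lambda^{-1}$ at infinity, so (H7) holds with $\delta=1$, yet $\psi(\xi)=\log(1+|\xi|^\beta)$ satisfies $\psi(\lambda\xi)/\psi(\xi)\to 1$ as $|\xi|\to\infty$, so WLSC fails for any $\underline{\alpha}>0$. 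Thus (A3) (and likewise (A2)) covers processes that (A1) does not, and the reduction cannot work. The paper's actual route is to obtain (H2) under (A2)/(A3) by citing \cite[Theorem 1.2]{KM2012} directly — that theorem is a Harnack inequality for subordinate Brownian motions precisely tailored to situations where $\psi$ may vary slowly and WLSC is unavailable — and then to verify (H1) separately using Remark \ref{remsubordinated} plus the two-sided bound $\nu(r)\approx \phi'(r^{-2})r^{-d-2}$ on $(0,1]$ from \cite[Proposition 4.2]{KM2012}. (Note your version $\nu(r)\asymp r^{-d}\phi(r^{-2})$ is not the estimate that is proved there, and it is not equivalent without an extra scaling hypothesis relating $\phi'$ to $\phi/\lambda$.) You should replace the WLSC step by the direct appeal to \cite{KM2012} for the Harnack inequality, and use the $\phi'$-based estimate of $\nu$ together with (H7) for the small-scale doubling.
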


More concrete examples of processes satisfying assumptions (A) are in Section 7. 
 
\begin{remark}
Conditions (H5), (H6), (H7), (H8) are exactly the same as conditions (A1), (A2), (A3), (1.2) in a recent, very interesting paper by P. Kim and A. Mimica \cite{KM2012}. Notation used in this paper and in \cite{KM2012} is slightly different.
\end{remark}

Our gradient estimates of harmonic functions for L{\'evy} processes are based on the following observation about a gradient of transition densities for these processes (cf. also Proposition \ref{dertransitionold}).
\begin{theorem}
\label{dertransition}
Let $X$ be a pure-jump isotropic L{\'e}vy process in $\R^d$ with the characteristic exponent $\psi$. We assume that its L{\'e}vy measure is infinite and has the density $\nu(x) = \nu(|x|)$ such that  
$\nu(r) $ is nonincreasing, absolutely continuous and $-\nu'(r)/r$ is nonincreasing.  We denote transition densities of $X$ by $p_t(x) = p_t(|x|)$. Then there exists a L{\'e}vy process $X_t^{(d+2)}$ in $\R^{d+2}$ with the characteristic exponent $\psi^{(d+2)}(\xi) = \psi(|\xi|)$, $\xi \in \R^{d+2}$ and the radial, radially nonincreasing transition density $p_{t}^{(d+2)}(x) = p_{t}^{(d+2)}(|x|)$ satisfying
\begin{equation}
\label{derivativeptr}
p_t^{(d+2)}(r) = \frac{-1}{2 \pi r} \frac{d}{dr} p_t(r), \quad \quad r > 0.
\end{equation}
Moreover  $p_t^{(d+2)}$ is continuous at any $x\neq0$.
\end{theorem}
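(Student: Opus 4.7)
The plan is to construct $X^{(d+2)}$ explicitly through its L\'evy measure and then use a classical Fourier--Hankel dimensional-descent identity both to identify its characteristic exponent and to obtain formula \eqref{derivativeptr}.

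I would first set $\nu^{(d+2)}(r):=-\nu'(r)/(2\pi r)$ for $r>0$. By the hypotheses on $\nu$, $\nu^{(d+2)}$ is nonnegative and nonincreasing. To verify it is the density of a L\'evy measure on $\R^{d+2}$ I pass to polar coordinates and integrate by parts:
$$
\int_{\R^{d+2}}(1\wedge|y|^2)\,\nu^{(d+2)}(|y|)\,dy
=-\frac{\omega_{d+1}}{2\pi}\int_0^\infty(1\wedge r^2)\,\nu'(r)\,r^{d}\,dr,
$$
and one integration by parts reduces the right-hand side to a finite linear combination of $\int_0^1 r^{d+1}\nu(r)\,dr$ and $\int_1^\infty r^{d-1}\nu(r)\,dr$, both finite by the L\'evy condition on $\nu$. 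The boundary terms $r^{d+2}\nu(r)|_{r\to 0^+}$ and $r^d\nu(r)|_{r\to\infty}$ vanish because $\nu$ is nonincreasing and those integrals converge. This yields a pure-jump isotropic L\'evy process $X^{(d+2)}$ in $\R^{d+2}$ with L\'evy density $\nu^{(d+2)}(|y|)$.

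Next I would prove a Fourier--Hankel descent identity. Writing the $n$-dimensional Fourier transform of a radial function $F(|x|)$ as
$\mathcal{H}_n[F](\rho)=(2\pi)^{n/2}\rho^{1-n/2}\int_0^\infty F(r)J_{n/2-1}(r\rho)r^{n/2}\,dr$
and using the Bessel recurrence $\frac{d}{dr}\bigl[r^{d/2}J_{d/2}(r\rho)\bigr]=\rho\,r^{d/2}J_{d/2-1}(r\rho)$, a single integration by parts yields
$$
\mathcal{H}_{d+2}\!\left[-\tfrac{1}{2\pi r}F'(r)\right](\rho)=\mathcal{H}_d[F](\rho)
$$
whenever boundary contributions vanish. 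Applied to a two-sided cutoff $\nu_\eps(r)=\nu(r)\mathds{1}_{\eps\le r\le 1/\eps}$ and passing to $\eps\to 0^+$ via monotone/dominated convergence (with tails controlled by the bounds from the previous step), this identity translates the L\'evy--Khintchine representation $\psi(\xi)=\int_{\R^d}(1-\cos(\xi\cdot x))\nu(|x|)\,dx$ into $\psi^{(d+2)}(\xi)=\psi(|\xi|)$.

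For the transition-density identity I combine Watanabe's unimodality theorem, which guarantees that the isotropic process $X$ with isotropic unimodal L\'evy measure has a radial, radially nonincreasing transition density $p_t$, with $C^1$-regularity of $p_t$ on $(0,\infty)$ (obtained by differentiating under the integral in its Fourier inversion, using sufficient decay of $e^{-t\psi(|\xi|)}$). Setting $q_t(r):=-p_t'(r)/(2\pi r)\ge 0$, the descent identity applied to $F=p_t$ gives $\mathcal{H}_{d+2}[q_t](\rho)=\mathcal{H}_d[p_t](\rho)=e^{-t\psi(\rho)}$, matching the Fourier transform of the transition density of $X^{(d+2)}$; Fourier uniqueness then identifies $q_t=p_t^{(d+2)}$, proving \eqref{derivativeptr}. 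Radial monotonicity of $p_t^{(d+2)}$ follows from a second application of Watanabe's theorem to $X^{(d+2)}$, whose L\'evy measure is itself radially nonincreasing, and continuity at $x\ne 0$ is inherited from the differentiability of $p_t$ on $(0,\infty)$. The main obstacle is the rigorous justification of the descent identity in the L\'evy setting: since $\nu$ is not integrable at the origin and $p_t$ is not a priori differentiable there, the regularization and boundary-term analysis in both applications of the descent identity require care.
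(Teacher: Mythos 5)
Your construction of $\nu^{(d+2)}(r)=-\nu'(r)/(2\pi r)$, the verification that it is a L\'evy measure by integration by parts, the Bessel/Hankel descent identity showing $\psi^{(d+2)}(\xi)=\psi(|\xi|)$, and the use of monotonicity of $-\nu'(r)/r$ to get unimodality of $X^{(d+2)}$ all match the paper's strategy. The gap is in the final step.

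You propose to prove the identity \eqref{derivativeptr} by differentiating the Fourier inversion formula $p_t(x)=(2\pi)^{-d}\int e^{ix\xi}e^{-t\psi(\xi)}\,d\xi$ under the integral sign, ``using sufficient decay of $e^{-t\psi(|\xi|)}$.'' But Theorem~\ref{dertransition} imposes \emph{no} growth condition on $\psi$ beyond infiniteness of the L\'evy measure, and there is no guaranteed decay: for instance, for a variance-gamma-type process one has $\psi(\xi)\asymp\log(1+|\xi|^2)$, so $e^{-t\psi(\xi)}\asymp(1+|\xi|^2)^{-t}$ is not even integrable when $t\le d/2$; $p_t$ may be unbounded and the pointwise Fourier inversion formula simply fails. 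In that regime your argument cannot get off the ground, yet the theorem is still supposed to apply. You flag at the end that ``the regularization and boundary-term analysis\dots require care,'' but you do not supply the regularization, and a cutoff of $\nu$ alone will not help, because the problem is the lack of decay of $e^{-t\psi}$ in the Fourier variable, not the singularity of $\nu$ at the origin.

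The paper resolves this by a different kind of regularization: it first proves Proposition~\ref{dertransitionold}, which is exactly your Fourier-side argument but under the extra hypothesis $\lim_{\rho\to\infty}\psi(\rho)/\log\rho=\infty$ (so that $\int_0^\infty e^{-t\psi(r)}(r^{d-1}+r^{d+1})\,dr<\infty$ and the Fourier inversion and differentiation under the integral are all legitimate). It then perturbs the \emph{process}, not the L\'evy density, by adding an independent tiny Cauchy component: $\psi_\ve(\xi)=\psi(\xi)+\ve|\xi|$. The perturbed symbol trivially satisfies the growth condition, so Proposition~\ref{dertransitionold} applies to $X_{(\ve)}$ and yields, after integrating in $r$,
$$p_{\ve,t}(r_2)-p_{\ve,t}(r_1)=\int_{r_1}^{r_2}\frac{-1}{2\pi r}\,p^{(d+2)}_{\ve,t}(r)\,dr.$$
The passage $\ve\to 0$ is then justified by weak convergence $p_{\ve,t}\to p_t$, $p^{(d+2)}_{\ve,t}\to p^{(d+2)}_t$ together with Lemma~\ref{continuity} (continuity of unimodal transition densities off the origin) and Lemma~\ref{contunimodal} (weak convergence of continuous radial nonincreasing densities implies pointwise convergence away from $0$). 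Differentiating the resulting integral identity recovers \eqref{derivativeptr}. This two-stage scheme --- prove the clean Fourier statement under a strong decay hypothesis, then approximate a general process by well-behaved ones and pass to the limit using unimodality --- is the missing idea in your proposal, and without it the proof does not cover the full class of processes in the theorem.
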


\begin{remark}
\label{remsubordinated}
Note that if $X_t = B_{S_t}$ is a subordinate Brownian motion and the Levy measure of $S$ is infinite then the above result is obvious and well-known.  We note that  the assumptions
of Theorem \ref{dertransition}  on $\nu(x)$ are automatically satisfied in this case.
\end{remark} 

Let $\varphi$ be a non-negative, non-zero function on $[0,\infty)$. We say that $\varphi$ satisfies {\it{weak upper scaling condition}} WUSC($\overline{\alpha},\theta_0,\overline{C}$) if there are numbers $\overline{\alpha} \in (0,2)$, $\theta_0 \ge 0$ and $\overline{C} > 0$ such that
$$
\varphi(\lambda \theta) \le \overline{C} \lambda^{\overline{\alpha}} \varphi(\theta), \quad \text{for} \quad \lambda \ge 1, \, \theta \ge \theta_0.
$$

Using Theorem \ref{dertransition} and the estimates of $p_t(x)$ obtained in \cite[Corollary 7, Theorem 21]{BGR2013} we obtain the following result which seems to be of independent interest.
\begin{corollary}\label{derestimates}
Let $X$ be an isotropic L{\'e}vy process in $\R^d$ satisfying assumptions $A$. Then its transition density $p_t(x) = p_t(|x|)$ satisfies
$$
\left|\frac{d}{dr} p_t(r)\right| \le c(d) \frac{1\wedge t\psi^*(1/r)}{ r^{d+1}},  \quad t,r>0.
$$
If additionally $\psi$ satisfies WLSC($\underline{\alpha},\theta_0,\underline{C}$), then 
\begin{equation*}
\label{lowergradpt}
\left|\frac{d}{dr} p_t(r)\right| \le c(d,\underline{\alpha})\frac r{\underline{C}^{(d+2)/\underline{\alpha}+1}}\left( [\psi^-(1/t)]^{d+2}\wedge \frac{ t\psi^*(1/r)}{ r^{d+2}}\right),  \quad t\psi^*(\theta_0)\le 1/\pi^2.
\end{equation*}
If additionally $\psi$ satisfies WLSC($\underline{\alpha},\theta_0,\underline{C}$) and WUSC($\overline{\alpha},\theta_0,\overline{C})$, then we have
\begin{equation*}
\label{uppergradpt}
\left|\frac{d}{dr} p_t(r)\right| \ge c^* r\left( [\psi^-(1/t)]^{d+2}\wedge \frac{ t\psi^*(1/r)}{ r^{d+2}}\right),  \quad t\psi^*(\theta_0/r_0)\le 1, \quad r< r_0/\theta_0,
\end{equation*}
where $ c^*=c^*(d, \underline{\alpha},\overline{\alpha},\underline{C},\overline{C}), \quad  r_0=r_0(d, \underline{\alpha},\overline{\alpha},\underline{C},\overline{C})$. Note that if the scaling conditions are global, that is $\theta_0=0$, then the last two estimates hold for all $t, r>0$. Here  $\psi^-$ denotes the generalized inverse of $\psi^*(r)=sup_{\rho\le r}\psi(\rho)$.
\end{corollary}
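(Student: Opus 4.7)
The plan is to reduce everything to density estimates in dimension $d+2$ via Theorem \ref{dertransition} and then invoke the results of \cite{BGR2013}. The identity in Theorem \ref{dertransition} rewrites
$$
\left|\frac{d}{dr} p_t(r)\right| = 2\pi r \, p_t^{(d+2)}(r), \qquad r>0,
$$
so every estimate on the left-hand side is equivalent, up to the factor $2\pi r$, to an estimate on the transition density $p_t^{(d+2)}$ of the isotropic unimodal L\'evy process $X^{(d+2)}$ in $\R^{d+2}$. Since $\psi^{(d+2)}(\xi)=\psi(|\xi|)$, the functions $\psi$ and $\psi^{(d+2)}$ are identical when viewed as functions of the radial variable, hence $\psi^*$ and $\psi^-$ are the same for both processes, and any WLSC or WUSC condition for $\psi$ transfers verbatim to the $(d+2)$-dimensional setting.

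With this reduction, the three inequalities follow by quoting the three corresponding upper/lower bounds on the transition density of an isotropic unimodal pure-jump L\'evy process from \cite[Corollary 7, Theorem 21]{BGR2013}, applied in dimension $n=d+2$. For the first inequality, the universal bound $p_t^{(d+2)}(r)\le c(d)(1\wedge t\psi^*(1/r))/r^{d+2}$ from \cite[Corollary 7]{BGR2013} is multiplied by $2\pi r$. For the second, under WLSC, \cite[Theorem 21]{BGR2013} (used as an upper bound in $\R^{d+2}$) yields
$$
p_t^{(d+2)}(r)\le c(d,\underline{\alpha})\,\underline{C}^{-(d+2)/\underline{\alpha}}\Bigl([\psi^-(1/t)]^{d+2}\wedge \tfrac{t\psi^*(1/r)}{r^{d+2}}\Bigr),
$$
in the range $t\psi^*(\theta_0)\le 1/\pi^2$ coming from the scaling regime in \cite{BGR2013}; multiplying by $2\pi r$ and absorbing the $\underline{C}^{-1}$ coming from the standard matching between $\psi^*$ and $\psi$ under WLSC produces the factor $\underline{C}^{-((d+2)/\underline{\alpha}+1)}$. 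The third inequality is obtained in the same way from the matching lower bound in \cite[Theorem 21]{BGR2013}, valid under both WLSC and WUSC; the restriction $r<r_0/\theta_0$ comes from the small-radius regime where the two-sided estimate of \cite{BGR2013} operates, and becomes vacuous when $\theta_0=0$.

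The only nonroutine step is verifying that $X^{(d+2)}$ satisfies the hypotheses of \cite[Corollary 7, Theorem 21]{BGR2013}. Theorem \ref{dertransition} already provides that $p_t^{(d+2)}$ is radial and radially nonincreasing, and that $X^{(d+2)}$ is a pure-jump isotropic L\'evy process; one must also check that its L\'evy measure is infinite with a radially nonincreasing density. This follows from the infiniteness and the monotonicity assumption on $\nu$, combined with the radial descent formula relating the L\'evy measures of $X$ and $X^{(d+2)}$ that is implicit in the construction of Theorem \ref{dertransition}. Once this is in place, the cited theorems apply directly.

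The main obstacle I expect is the careful bookkeeping of the dependence of the constants in \cite{BGR2013} on the ambient dimension, in particular extracting the explicit $\underline{C}^{-((d+2)/\underline{\alpha}+1)}$ prefactor; this requires revisiting the proof of \cite[Theorem 21]{BGR2013} in dimension $d+2$ rather than taking its statement as a black box, because the scaling constant enters through a power of $\underline{C}^{-1/\underline{\alpha}}$ each time one rescales in $r$ inside the proof.
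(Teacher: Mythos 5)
Your proposal matches the paper's intended proof exactly: the corollary is stated immediately after the remark that it follows from Theorem \ref{dertransition} together with \cite[Corollary 7, Theorem 21]{BGR2013}, and the paper gives no further details, so your filling-in is precisely the argument. Your reduction $\left|\frac{d}{dr} p_t(r)\right| = 2\pi r\, p_t^{(d+2)}(r)$ is correct (the sign is right since $p_t$ is radially nonincreasing), the transfer of $\psi^*$, $\psi^-$, WLSC and WUSC to the $(d+2)$-dimensional process is automatic because they share the same radial profile $\psi$, and the verification that $X^{(d+2)}$ is a pure-jump isotropic unimodal L\'evy process with infinite L\'evy measure and radially nonincreasing L\'evy density is indeed established within the proof of Theorem \ref{dertransition}, so the hypotheses of the cited results of \cite{BGR2013} in dimension $d+2$ are met. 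Your caveat about constant bookkeeping is legitimate but minor; the paper also quotes the $\underline{C}^{-((d+2)/\underline{\alpha}+1)}$ prefactor directly from the dimension-$(d+2)$ form of the BGR2013 estimates without re-deriving it.
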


Theorem \ref{main} implies the following result.

\begin{corollary}
\label{gradientGreenmain}
Let $X$ be an isotropic L{\'e}vy process in $\R^d$ satisfying assumptions (A). Let $D \subset \R^d$ be an open, nonempty set, if $X$ is not transient we assume additionally that $D$ is bounded. Let $G_D(x,y)$ be the Green function corresponding to the process $X$ for the set $D$ (for the definition of $G_D$ see Preliminaries). Then $\nabla_x G_D(x,y)$ exists for any $x, y \in D$, $x \ne y$,  and we have
\begin{equation}
\label{eqGreen}
|\nabla_x G_D(x,y)| \le c\frac{G_D(x,y)}{\delta_D(x) \wedge |x - y| \wedge 1}, \quad \quad x \in D, 
\end{equation} 
where $\delta_D(x) = \dist(x,\partial D)$.
\end{corollary}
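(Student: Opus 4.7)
The plan is to reduce Corollary \ref{gradientGreenmain} directly to Theorem \ref{main} applied to the function $f(x) = G_D(x,y)$ regarded as a function of $x$, with $y \in D$ fixed. The key observation is that although $G_D(\cdot,y)$ is not harmonic on all of $D$ (it has a singularity at $y$), it is harmonic on the open set $D' := D \setminus \{y\}$. Moreover, for $x \in D'$, we have the elementary identity
$$\delta_{D'}(x) = \dist(x, \partial D \cup \{y\}) = \delta_D(x) \wedge |x-y|,$$
so Theorem \ref{main} applied on $D'$ produces exactly the bound $|\nabla f(x)| \le c f(x) / (\delta_{D'}(x)\wedge 1)$, which is the right-hand side of \eqref{eqGreen}.

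Before invoking Theorem \ref{main}, I would verify its three hypotheses for $f=G_D(\cdot, y)$ on $D'$. First, existence and finiteness of $G_D$: in the transient case $G_D \le G_{\R^d} < \infty$ off the diagonal for any open $D$, and in the non-transient case the assumption that $D$ is bounded together with standard comparability of $G_D$ with the expected exit time (available under assumptions (A), in particular (H1)--(H2) and the Harnack inequality) guarantees that $G_D(\cdot,y)$ is well-defined and finite on $D \setminus \{y\}$. Second, $G_D(\cdot,y) \ge 0$ by construction. Third, harmonicity: by the strong Markov property, for any open $U$ with $\overline{U} \subset D'$ and any $x \in U$,
$$G_D(x,y) = \Ee^x G_D(X_{\tau_U}, y),$$
which is the defining property of harmonicity used throughout the paper.

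Putting these together, Theorem \ref{main} applied to $f = G_D(\cdot,y)$ on $D'$ yields that $\nabla_x G_D(x,y)$ exists for each $x \in D$, $x \ne y$, and
$$|\nabla_x G_D(x,y)| \le c\,\frac{G_D(x,y)}{\delta_{D'}(x) \wedge 1} = c\,\frac{G_D(x,y)}{\delta_D(x) \wedge |x-y| \wedge 1},$$
which is precisely \eqref{eqGreen}. The constant $c$ depends only on the process, as in Theorem \ref{main}.

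I do not expect any serious obstacle here; the only mildly delicate point is the justification that $G_D(\cdot,y)$ is finite and harmonic on $D \setminus \{y\}$ in the recurrent case, but the boundedness assumption on $D$ combined with the standard potential-theoretic machinery available under assumptions (A) (in particular the Harnack inequality (H2), which gives the necessary regularity) handles this routinely.
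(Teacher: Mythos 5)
Your proof is correct and is essentially the argument the paper has in mind: the paper already records in the Preliminaries that $x \mapsto G_D(x,y)$ is harmonic in $D\setminus\{y\}$ and introduces the corollary with the single phrase ``Theorem \ref{main} implies the following result,'' so the intended proof is precisely the reduction you give, using $\delta_{D\setminus\{y\}}(x)=\delta_D(x)\wedge|x-y|$ and applying Theorem \ref{main} on $D\setminus\{y\}$.
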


The potential theory of L{\'e}vy processes, especially subordinate Brownian motions, has attracted a lot of attention during recent years see e.g. \cite{BKK2013, CKS2010, CKS2013, KimSongVondracek2012, KimSongVondracek2012a}. Our study is in the scope of this type of research. The assertion of Theorem \ref{main} is well known for  harmonic functions with respect to the Brownian motion and symmetric $\alpha$-stable proceses \cite{BKN2002}, where explicit formulas for the Poisson kernel for a ball served as a main tool. For the processes treated in the present  paper such formulas are not available and we had to take another approach based on Theorem \ref{dertransition}.   A result similar to Theorem \ref{main} is also known for harmonic functions with respect to Schr{\"o}dinger operators based on the Laplacian and the fractional Laplacian (\cite{CZ1990}, \cite{BKN2002}, \cite{K2013}). Some probabilistic ideas used in our paper are, to some extent, similar to the concept of coupling from M. Cranston and Z. Zhao's paper \cite{CZ1990}. The idea of coupling for L{\'e}vy processes was used by R. Schilling, P. Sztonyk and J. Wang in \cite{SSzW2012} where gradient estimates of the corresponding transition semigroups were derived. They are of the type $||\nabla P_t u||_{\infty} \le c ||u||_{\infty} f^{-1}(1/t)$, $u \in \mathcal{B}_b(\R^d)$, where $|\Re \psi(\xi)| \approx f(|\xi|)$ as $|\xi| \to \infty$. Recently, K. Kaleta and P. Sztonyk in \cite{KSz2013} obtained also gradient estimates of transition densities for L{\'e}vy processes. Note that our sharp, two-sided estimates (\ref{lowergradpt}-\ref{uppergradpt}) obtained in Corollary \ref{derestimates} are of different form than those obtained in \cite{SSzW2012} and \cite{KSz2013}. The results in \cite{SSzW2012} and \cite{KSz2013} are obtained under more general assumptions but they are not as sharp as ours.

Estimates of derivatives of harmonic functions with respect to some (not necessarily symmetric) $\alpha$-stable processes were obtained by P. Sztonyk in \cite{Sz2010}. However these estimates were obtained for a different class of processes and they are not pointwise (as our estimates) which is crucial in applications (see below). Moreover, P. Sztonyk in \cite{Sz2010} obtained gradient estimates only for $\alpha \ge 1$.

It seems that for applications the most important are Corollaries \ref{derestimates} and \ref{gradientGreenmain}. For example they could be used to study operators $L + b \nabla$ (where $L$ is the generator of a L{\'e}vy process in $\R^d$ and $b: \R^d \to \R$). Such operators (when $L = -(-\Delta)^{\alpha/2}$) are intensively studied both in the theory of partial differential equations (see e.g. \cite{S2013}) and in the theory of stochastic processes (see e.g. \cite{BJ2007}, \cite{CKS2012}). Especially the techniques used by K. Bogdan, T. Jakubowski in \cite{BJ2007} and by Z.-Q. Chen, P. Kim, R. Song in \cite{CKS2012} demand pointwise gradient estimates of transition densities and a Green function exactly of the type presented in our paper. It seems that our estimates would allow to extend results from \cite{BJ2007} and \cite{CKS2012} (obtained there for the fractional Laplacian) to more general generators of L{\'e}vy processes.  Our estimates also seem  to be useful in the study of the spectral theory related to L{\'e}vy processes and Schr{\"o}dinger operators based on their generators.

The paper is organized as follows. Section 2 is preliminary. In Section 3 we prove Theorem \ref{dertransition}. Section 4 concerns the difference process. In Section 5 we prove some auxiliary facts concerning the Green function and the L{\'e}vy measure. In Section 6 we prove Theorem \ref{main}. In the last section we present examples of processes satisfying assumptions (A) and we also present an example of a harmonic function with respect to some pure-jump, isotropic L{\'e}vy process for which the gradient does not exist at some point.

\section{Preliminaries}

For $x \in \R^d$ and $r > 0$ we let $B(x,r) = \{y \in \R^d: \, |y - x| < r\}$. By $a \wedge b$ we denote $\min(a,b)$ for $a, b \in \R$. When $D \subset \R^d$ is an open set we denote by $\mathcal{B}(D)$ a family of Borel subsets of $D$.

A Borel measure on $\R^d$ is called {\it{isotropic unimodal}} if on $\R^d \setminus \{0\}$ it is absolutely continuous with respect to the Lebesgue measure and has a finite radial nonincreasing density function (such measures may have an atom at the origin). 

A L{\'e}vy process $X = (X_t, t \ge 0)$ in $\R^d$ is called isotropic unimodal if its transition probability $p_t(dx)$ is isotropic unimodal for all $t > 0$.  When additionally $X$ is a pure-jump process then the following L{\'e}vy-Khintchine formula holds for $t > 0$ and $\xi \in \R^d$,
$$
E^0 e^{i\xi X_t} = \int_{\R^d} e^{i\xi x} p_t(dx) = e^{-t \psi(\xi)} \quad \text{where} \quad \psi(\xi) = \int_{\R^d} (1 - \cos(\xi x)) \nu(dx).
$$
$\psi$ is the characteristic exponent of $X$ and $\nu$ is the L{\'e}vy measure of $X$. $E^0$ is the expected value for the process $X$ starting from $0$. Recall that a L{\'e}vy measure is a measure concentrated on $\R^d \setminus \{0\}$ such that $\int_{\R^d} (|x|^2 \wedge 1) \nu(dx) < \infty$. Isotropic unimodal pure-jump L{\'e}vy measures are characterized in \cite{W1983} by unimodal L{\'e}vy measures $\nu(dx) = \nu(x) \, dx = \nu(|x|) \, dx$. 

Unless explicitly stated otherwise in what follows we assume that $X$ is a pure-jump isotropic unimodal L{\'e}vy process in $\R^d$ with (isotropic unimodal) infinite L{\'e}vy measure $\nu$. Then for any $t > 0$ the measure $p_t(dx)$ has a radial, radially nonincreasing density function $p_t(x) = p_t(|x|)$ on $\R^d$ with no atom at the origin. However, it may happen that $p_t(0) = \infty$, for some $t > 0$. As usual, we denote by $P^x$ and $E^x$ the probability measure and the corresponding expectation for the the process starting from $x \in \R^d$.

The process $X$ is said to be transient if $P^0(\lim_{t \to \infty} |X_t| = \infty) = 1$. For $ d \ge 3$ the process $X$ is always transient (see e.g. \cite{G2013}, the remark after Lemma 5).

For a transient process by $U$ we denote the potential kernel for the process $X$. That is 
$$U(x)= \int_0^\infty p_{t}(x)\,dt,\quad x \in \R^d.$$
 By $U^{(d+2)}$ we denote the potential kernel for the process $X^{(d+2)}$ defined in Theorem \ref{dertransition}. Since the process $X^{(d+2)}$ lives in at least three-dimensional space then $U^{(d+2)}(x)<\infty, \, x\neq 0$.  T. Grzywny in \cite{G2013} obtained estimates of the potential kernel in terms of the symbol $\psi$, which play an important role in the present paper.

We define the maximal characteristic function $\psi^*(r) = \sup_{s \le r} \psi(s)$, where $r \ge 0$. We have \cite[Proposition 2]{BGR2013} $\psi(r) \le \psi^*(r) \le \pi^2 \psi(r)$, $r \ge 0$. The function  $\psi^*$ has the property  \cite[Lemma 1]{G2013}, 
$$\psi^*(r)\le 2\frac{1+s^2}{s^2}\psi^*(sr),\quad r,s>0.$$
In the sequel the following  nondecreasing function will play an important role in our development 
$$
\V(r) = \left(\psi^*\left(\frac{1}{r}\right)\right)^{-1/2}, \quad  r > 0, 
$$
and $\V(0) = 0$.  As an immediate consequence of the above property of $\psi^*$  we have 
\begin{equation}\V(sr)\le  \sqrt{2(1+s^2)}\V(r), \quad r,s>0. \label{psi_star}\end{equation}

This property will be frequently used throughout the paper without further mention while comparing values of  $\V$ at points with fixed ratio.  
There are many important quantitie related to the process $X$, which enjoy precise estimates in terms of  $\V(r)$.
We have \cite[Corollary 7]{BGR2013},
\begin{equation}
\label{denestimates}
p_t(x) \le \frac{c t}{\V^2({|x|}) |x|^d}, \quad t > 0, \, x \in \R^d,
\end{equation}
\begin{equation}
\label{Levymeasureestimates}
\nu(x) \le \frac{c}{\V^2({|x|}) |x|^d}, \quad x \in \R^d,
\end{equation}
where $c = c(d)$. Under some further conditions (\ref{denestimates}-\ref{Levymeasureestimates}) can be reveresed  (\cite{BGR2013}). 
Note that the upper bound of the L\'evy density yields 
\begin{equation}\label{0_Levy}\limsup_{r\searrow 0} r^{d+2}\nu(r)\le \limsup_{r\searrow 0}  \frac{c r^2}{\V^2(r)}=0, \end{equation}
and
\begin{equation}\label{infty_Levy}\limsup_{r\to \infty} r^{d}\nu(r)\le \limsup_{r\to \infty}  \frac{c}{\V^2(r)}=0.\end{equation}
For the proof that $\limsup_{r\searrow 0}  \frac{r^2}{\V^2(r)}=0$, see \cite[Lemma 2.5]{BGR2013_1}.

The first exit time of an open, nonempty set $D \subset \R^d$ of the process $X$ is defined by $\tau_D = \inf\{t > 0: \, X_t \notin D\}$.

\begin{definition}
A Borel function $f: \R^d \to \R$ is called {\it{harmonic}} with respect to the process $X$ in an open, nonempty set $D \subset \R^d$ if for any bounded, open, nonempty set $B$, such that $\overline{B} \subset D$
$$
f(x) = E^x\left(f\left(X(\tau_B)\right)\right), \quad \quad x \in B.
$$
We understand that the expectation is absolutely convergent.
\end{definition}

\begin{definition}\label{Harnack}
The {\it{scale invariant Harnack inequality}} holds for the process $X$ if there exists a constant $a_2$ such that for any $x_0 \in R^d$, $r \in (0,1]$, and any function $h$ nonnegative on $\R^d$ and harmonic in a ball $B(x_0,r)$, 
$$
\sup_{x \in B(x_0,r/2)} h(x) \le a_2 \inf_{x \in B(x_0,r/2)} h(x).
$$
\end{definition}

Let $D \subset \R^d$ be an open, nonempty set. If $X$ is not transient we assume additionally that $D$ is bounded. We define a {\it{killed process}} $X_t^D$ by $X_t^D = X_t$ if $t < \tau_D$ and $X_t^D = \partial$ otherwise, where $\partial$ is some point adjoined to $D$ (usually called cemetary). The transition density for $X_t^D$ on $D$ is given by
$$
p_D(t,x,y) = p_t(x-y) - E^x(p_{t - \tau_D}(X(\tau_D),y), \, t > \tau_D), \quad x,y \in D, \, t > 0,
$$ 
that is for any Borel set $A \subset \R^d$ we have
$$
P^x(X_t^D \in A) = \int_A p_D(t,x,y) \, dy, \quad x \in D, \, t> 0.
$$
We have $p_D(t,x,y) = p_D(t,y,x)$, $x, y \in D$, $t > 0$.
We define the {\it{Green function}} for $X_t^D$ by 
$$
G_D(x,y) = \int_0^{\infty} p_D(t,x,y) \, dt, \quad x,y \in D,
$$
$G_D(x,y) = 0$ if $x \notin D$ or $y \notin D$. For any Borel set $A \subset \R^d$ we have
$$
E^x \int_0^{\tau_D} 1_A(X_t) \, dt = \int_A G_D(x,y) \, dy, \quad x \in D.
$$
In particular if we set $A=D$ we obtain 
$$
E^x \tau_D  = \int_D G_D(x,y) \, dy, \quad x \in D.
$$
We have $G_D(x,y) = G_D(y,x)$, $x, y \in D$. For a fixed $y \in D$ the function $x \to G_D(x,y)$ is harmonic with respect to $X$ in $D \setminus \{y\}$.  
The estimates of $E^x \tau_D$ when $D$ is a ball play an important role in the paper. Here we record very useful  upper and lower  bounds  in terms of the function $\V$ (see e.g. \cite[Lemmas 2.3, 2.7]{BGR2013_1}, see also \cite{S1998}).
\begin{lemma}\label{Exit_ball} There is an absolute constant $C_1$, and   a constant  $C_2=C_2(d)$ such that for any $r > 0$ we have
$$
E^x \tau_{B(0,r)}  \le C_1 L(r)L(\delta(x)),  \quad x \in B(0,r)
$$
and 
$$
E^x \tau_{B(0,r)}  \ge C_2 L^2(r),  \quad x \in B(0,r/2),
$$
where $\delta(x)=\delta_{B(0,r)}(x)$.
\end{lemma}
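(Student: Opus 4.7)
The \emph{lower bound} reduces quickly to the centered estimate $E^0\tau_{B(0,\rho)} \gtrsim L^2(\rho)$. For $x \in B(0, r/2)$ the inclusion $B(x, r/2) \subset B(0, r)$ yields, by domain monotonicity and translation invariance,
\[
E^x \tau_{B(0,r)} \ge E^x \tau_{B(x,r/2)} = E^0 \tau_{B(0,r/2)}.
\]
It therefore suffices to prove $E^0\tau_{B(0,\rho)} \ge c(d) L^2(\rho)$ for every $\rho>0$. Integrating the density upper bound \eqref{denestimates} over $\{|y| > \rho/2\}$ and applying the standard running-maximum argument available for isotropic unimodal densities gives a Pruitt-type inequality $P^0(\tau_{B(0,\rho)} \le t) \le C(d)\,t/L^2(\rho)$; picking $t = L^2(\rho)/(2C(d))$ yields $P^0(\tau_{B(0,\rho)} > t) \ge 1/2$, hence $E^0\tau_{B(0,\rho)} \ge t/2$. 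The doubling of $L$ in \eqref{psi_star} then replaces $L^2(r/2)$ by $c(d) L^2(r)$, producing the claimed dimensional constant $C_2(d)$.

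For the \emph{upper bound} I would first record the centered version $E^y\tau_{B(y,\rho)} \le C\,L^2(\rho)$ with an absolute constant $C$, a consequence of the general Pruitt estimate $E^y\tau_{B(y,\rho)} \lesssim 1/h(\rho)$ combined with the dimension-free comparability $h(\rho) := \int_{|z|\ge \rho}\nu(z)\,dz + \rho^{-2}\int_{|z|<\rho}|z|^2\nu(z)\,dz \asymp \psi^*(1/\rho) = 1/L^2(\rho)$, the latter following from $\psi(\xi) \gtrsim \int_{|z|>|\xi|^{-1}}\nu(z)\,dz$. Combined with $B(0,r) \subset B(y,2r)$ for $y\in B(0,r)$ and the doubling of $L$, this already yields the crude estimate $E^y\tau_{B(0,r)} \le C L^2(r)$. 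To sharpen it to $C_1 L(r) L(\delta(x))$, I would construct a nonnegative supersolution $h$ on $B(0,r)$, extended by $0$ off the ball, of the form $h(y) = c\,L(r)\,L(\delta_{B(0,r)}(y))$ with $c$ absolute, such that $\calA h \le -1$ on $B(0,r)$, where $\calA$ denotes the generator of $X$. Dynkin's formula then gives $E^x\tau_{B(0,r)} \le h(x) \le c\,L(r) L(\delta(x))$.

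The \emph{main obstacle} is verifying the pointwise inequality $\calA h \le -1$ on $B(0,r)$. The nonlocal integrand
\[
\int_{\R^d}\bigl(h(y+z)-h(y)-\mathbf{1}_{|z|<1}\,z\cdot\nabla h(y)\bigr)\nu(|z|)\,dz \le -1
\]
must be controlled uniformly in $y\in B(0,r)$. I would split into small jumps $|z| < \delta(y)/2$ --- where $\delta_{B(0,r)}(y+z)$ is comparable to $\delta(y)$ and a second-order Taylor expansion together with the regularity of $L$ enforced by (H1) controls the contribution by the centered analogue at scale $\delta(y)$ --- and large jumps $|z| \ge \delta(y)/2$, handled via the L\'evy tail bound $\int_{|z|\ge s}\nu(z)\,dz \lesssim 1/L^2(s)$ together with the uniform bound $h \lesssim L^2(r)$ on $B(0,r)$. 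Balancing the two regimes to produce a negative contribution of order $-1$ is the delicate step, and it is essentially the content of \cite[Lemmas 2.3, 2.7]{BGR2013_1}, refining the earlier stable-process estimates of Sztonyk \cite{S1998}.
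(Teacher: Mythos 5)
The paper does not prove this lemma at all; it simply cites \cite[Lemmas 2.3, 2.7]{BGR2013_1} and \cite{S1998}. Your proposal reconstructs, at the level of strategy, what those references contain, so you are not contradicting the paper's route---you are filling in the route the paper silently points to.

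On the \emph{lower bound}, your argument is complete and correct: domain monotonicity and translation invariance reduce to the centered estimate $E^0\tau_{B(0,\rho)}\gtrsim L^2(\rho)$, which follows from the Pruitt-type bound $P^0(\sup_{s\le t}|X_s|>\rho)\le C\,t\,h(\rho)$ and the comparability $h(\rho)\asymp\psi^*(1/\rho)=1/L^2(\rho)$; the doubling property \eqref{psi_star} then absorbs the factor from replacing $r/2$ by $r$. You do not actually need the heat-kernel estimate \eqref{denestimates} here; the running-maximum bound is a direct consequence of the L\'evy--Khintchine formula and works for any L\'evy process.

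On the \emph{upper bound}, the crude estimate $E^x\tau_{B(0,r)}\le C\,L^2(r)$ via Pruitt is fine, but the refined bound $C_1 L(r)L(\delta(x))$ is where the substance lies, and your proposal stops short of it. You name a candidate barrier $h(y)=c\,L(r)L(\delta_{B(0,r)}(y))$, state that one must check $\calA h\le -1$ on $B(0,r)$, split into small and large jumps, and then declare the verification ``essentially the content of'' the cited lemmas. That declaration is precisely the hard part being deferred, not proved: the second-order Taylor control of the small-jump contribution requires a genuine regularity estimate on the composite $y\mapsto L(\delta(y))$ (in particular, near the boundary where $\delta(y)\to 0$ the concavity-type behaviour of $L$ must be exploited, and the constant must come out \emph{absolute}, not merely dimension-dependent), and it is not obvious that the na\"ive barrier you wrote is the one that closes. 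In fact \cite{BGR2013_1} does not run a clean generator-supersolution computation of this shape; the upper bound there is obtained via a maximum/exit-probability iteration that carefully tracks $\delta(x)$ through a sequence of scales, which is why the cited constant $C_1$ is absolute. So: the lower bound is solid, the upper-bound plan is plausible in spirit but has a real gap exactly at the step you flag as ``delicate,'' and as written the proposal is a sketch, not a proof, of the upper estimate.
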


Let $D \subset \R^d$ be a bounded, open, nonempty set. The distribution $P^x(X(\tau_D) \in \cdot)$ is called the {\it{harmonic measure}} with respect to $X$. The harmonic measure for Borel sets $A \subset (\overline{D})^c$ is given by the Ikeda-Watanabe formula \cite{IW1962},
\begin{equation}
\label{IW}
P^x(X(\tau_D) \in A) = \int_A \int_D G_D(x,y) \nu(y-z) \, dy \, dz, \quad x \in D.
\end{equation} 

When $D \subset \R^d$ is a bounded, open Lipschitz set then we have  \cite{Sztonyk2000}, \cite{Millar1975},
\begin{equation}
\label{Xboundary}
P^x(X(\tau_D) \in \partial D) = 0, \quad x \in D.
\end{equation}
It follows that for such sets $D$ the Ikeda-Watanabe formula (\ref{IW}) holds for any Borel set $A \subset D^c$.
Let $D \subset \R^d$ be a bounded, open, nonempty set. For any $s > 0$, $x \in D$, $z \in (\overline{D})^c$ put 
\begin{equation}
\label{hformula}
h_D(x,s,z) = \int_D p_D(s,x,y) \nu(y-z) \, dy.
\end{equation}
By Ikeda-Watanabe formula \cite{IW1962} for any Borel $A \subset (0,\infty)$, $B \subset D^c$ we have 
\begin{equation}
\label{IW2}
P^x(\tau_D \in A, X(\tau_D) \in B) = \int_A \int_B h_D(x,s,z) \, dz \, ds, \quad x \in D.
\end{equation}

From \cite[Lemma 2.1]{BGR2013_1} we have the following estimate.
\begin{lemma}
\label{UBHP} Let $z \in \R^d$, $s > 0$, $D \subset B(z,s)$ be a bounded, open, nonempty set and $y \in D \cap B(z,s/2)$. There is a constant $c=c(d)$ such that 
$$
P^y(X(\tau_D) \in B^c(z,s)) \le c \frac{E^y(\tau_D)}{\V^2(s)}.
$$
\end{lemma}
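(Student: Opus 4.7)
The plan is to reduce the desired bound to a Pruitt-type exit-probability estimate via a short geometric observation. First, since $y \in B(z, s/2)$, for any $\xi \in B^c(z, s)$ the triangle inequality gives $|\xi - y| \geq |\xi - z| - |z - y| \geq s/2$. Applied with $\xi = X(\tau_D)$ on $\{X(\tau_D) \in B^c(z, s)\}$, this forces $\tau_{B(y, s/2)} \leq \tau_D$, so
\begin{equation*}
P^y(X(\tau_D) \in B^c(z, s)) \leq P^y\bigl(\tau_{B(y, s/2)} \leq \tau_D\bigr).
\end{equation*}

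I would then prove the Pruitt-type inequality $P^y\bigl(\tau_{B(y, r)} \leq \tau_D\bigr) \leq c\,E^y \tau_D / \V^2(r)$ for arbitrary $r > 0$ by Dynkin's formula. Fix a cutoff $\phi \in C_b^2(\R^d)$ with $0 \leq \phi \leq 1$, $\phi(y) = 0$, $\phi \equiv 1$ on $B^c(y, r)$, and $\|\nabla^2 \phi\|_\infty \leq C/r^2$. Since $\nu$ is isotropic, the linear term in the Taylor expansion of $\phi(x + v) - \phi(x)$ cancels upon averaging $v \leftrightarrow -v$, leaving an $O(|v|^2 \|\nabla^2 \phi\|_\infty)$ remainder. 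Denoting by $\calA$ the generator of $X$, this produces the pointwise bound
\begin{equation*}
|\calA\phi(x)| \leq c \int_{\R^d} \Bigl(\tfrac{|v|^2}{r^2}\wedge 1\Bigr) \nu(dv) \leq \frac{c'}{\V^2(r)},
\end{equation*}
where the second step invokes the classical comparison of the Pruitt function with $\psi^*(1/r) = 1/\V^2(r)$ (a consequence of the density bound (\ref{Levymeasureestimates})). Dynkin's formula applied at $T := \tau_{B(y, r)} \wedge \tau_D$, together with $\phi(y) = 0$ and $\phi(X(T)) \geq \I_{\{\tau_{B(y, r)} \leq \tau_D\}}$, then yields the Pruitt-type bound.

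Finally I would specialise $r = s/2$ and absorb $\V^2(s/2)$ into $\V^2(s)$ using the doubling property (\ref{psi_star}), which gives $\V^2(s/2) \geq c_0 \V^2(s)$ for an absolute constant $c_0$. The main obstacle is the pointwise bound $|\calA\phi(x)| \leq c/\V^2(r)$; here the isotropy of the L\'evy measure is crucial, because it removes the otherwise unbounded linear term from the Taylor expansion, leaving only the $|v|^2$ remainder which integrates cleanly against $\nu$.
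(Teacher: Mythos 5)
Your argument is correct and, modulo a small attribution slip, it is essentially the route the paper implicitly relies on: the lemma is imported from \cite[Lemma 2.1]{BGR2013_1} without proof, and the standard proof there is likewise a Dynkin/Pruitt argument with a cutoff function. The geometric reduction in your first paragraph is exactly right: on $\{X(\tau_D)\in B^c(z,s)\}$ the triangle inequality gives $X(\tau_D)\notin B(y,s/2)$, hence $\tau_{B(y,s/2)}\le\tau_D$, and the remaining ingredients (bounding $\calA\phi$ by the Pruitt function, applying Dynkin's identity at $T=\tau_{B(y,r)}\wedge\tau_D$, and finally using the doubling inequality $\V(s)\le\sqrt{10}\,\V(s/2)$ from (\ref{psi_star}) to replace $\V^2(s/2)$ by $\V^2(s)$) all go through.

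One remark on the parenthetical in your second paragraph: the comparison $\int(\tfrac{|v|^2}{r^2}\wedge 1)\,\nu(dv)\le c\,\psi^*(1/r)=c/\V^2(r)$ is \emph{not} a consequence of the density upper bound (\ref{Levymeasureestimates}). If you try to integrate that pointwise bound against $|v|^2/r^2$ over $|v|<r$ you pick up a logarithmically divergent factor $\int_0^r d\rho/\rho$, so the route you gesture at does not close. The correct and classical source of the Pruitt comparison is the L\'evy--Khintchine formula itself: for isotropic $\psi$, averaging $\psi(\xi)=\int(1-\cos\langle\xi,v\rangle)\,\nu(dv)$ over $\xi$ in the sphere of radius $1/r$ and using the elementary bound $1-\text{(spherical average of }\cos\text{)}\ge c(d)\min(|v|^2/r^2,1)$ gives $\psi(1/r)\ge c(d)\int(\tfrac{|v|^2}{r^2}\wedge 1)\,\nu(dv)$, whence $h(r)\le c(d)^{-1}\psi^*(1/r)$. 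This is the estimate recorded in \cite{BGR2013} and \cite{G2013}. With that reference corrected, the proof is complete, and the constant depends only on $d$ as required.
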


Important examples of isotropic unimodal L{\'e}vy processes are subordinate Brownian motions. By $S = (S_t, t\ge 0)$ we denote a {\it{subordinator}} i.e. a nondecreasing L{\'e}vy process starting from $0$. The Laplace transform of $S$ is of the form 
$$
E e^{-\lambda S_t} = e^{- t \phi(\lambda)}, \quad \lambda \ge 0, \, t \ge 0,
$$
where $\phi$ is called the Laplace exponent of $S$. $\phi$ is a Bernstein function and has the following representation 
\begin{equation}
\label{phirep}
\phi(\lambda) = b \lambda + \int_{(0,\infty)} (1 - e^{-\lambda u }) \, \nu_S(du)
\end{equation}
where $b \ge 0$ and $\nu_S$ is a L{\'e}vy measure on $(0, \infty)$ such that $\int_{(0,\infty)} (1 \wedge u) \, \nu_S(du) < \infty$.  

Let $B = (B_t, t \ge 0)$ be a Brownian motion in $\R^d$ (with a generator $\Delta$) and let $S$ be an independent subordinator. We define a new process $X_t = B_{S_t}$ and call it a {\it{subordinate Brownian motion}}. Let us assume that $b = 0$ and $\nu_S(0,\infty) = \infty$ in (\ref{phirep}).  This process is a L{\'e}vy process with the characteristic exponent $\psi(\xi) = \phi(|\xi|^2)$. Moreover $X$ has the L{\'e}vy measure $\nu(dx) = \nu(x) \, dx = \nu(|x|) \, dx$ given by \cite[Theorem 30.1]{Sato1999}
$$
\nu(r) = \int_{(0,\infty)} (4 \pi t)^{-d/2} \exp\left(-\frac{r^2}{4t}\right) \, \nu_S(dt), \quad r > 0.
$$

The next lemma seems to be  known but we could not find any reference so we decided to present its short proof. 
\begin{lemma}
\label{continuity}
Let $X$ be a pure-jump isotropic L{\'e}vy process in $\R^d$. We assume that its L{\'e}vy measure is infinite and has the density $\nu(x) = \nu(|x|)$ which is radially nonincreasing. Then for each $t > 0$ the density function $p_t(x)$ of the process is continuous on $\R^d \setminus \{0\}$.
\end{lemma}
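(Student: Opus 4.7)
My plan is to exploit the semigroup identity $p_t = p_{t/2} \ast p_{t/2}$ together with a decomposition of $p_{t/2}$ into its restrictions to a small ball and to its complement. Fix $x_0 \in \R^d \setminus \{0\}$, set $r = |x_0|$, and pick any $\eps \in (0, r/2)$. I would write $p_{t/2} = f_\eps + g_\eps$ with $f_\eps(y) = p_{t/2}(y)\mathbf{1}_{B(0,\eps)}(y)$ and $g_\eps(y) = p_{t/2}(y)\mathbf{1}_{\R^d \setminus B(0,\eps)}(y)$. The first observation is that $g_\eps \in L^1(\R^d) \cap L^\infty(\R^d)$: by radial non-increasingness, $p_{t/2}(y) \le p_{t/2}(\eps)$ whenever $|y| \ge \eps$, and $p_{t/2}(\eps) \le 1/|B(0,\eps)|$ since $p_{t/2}(\eps)\cdot|B(0,\eps)| \le \int_{B(0,\eps)} p_{t/2}(z)\,dz \le 1$; the $L^1$ bound is trivial. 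Similarly $f_\eps \in L^1(\R^d)$.

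Expanding the semigroup identity yields $p_t = f_\eps \ast f_\eps + 2\, f_\eps \ast g_\eps + g_\eps \ast g_\eps$, and I would treat the three summands separately. The first is harmless on a neighborhood of $x_0$: $f_\eps$ is supported in $B(0,\eps)$, so $f_\eps \ast f_\eps$ is supported in $B(0, 2\eps)$, and this set is disjoint from the open ball $B(x_0, r - 2\eps)$. The remaining two summands are each a convolution of an $L^1$ function with a function in $L^1 \cap L^\infty$, so the standard estimate $|(h \ast k)(x+\eta) - (h \ast k)(x)| \le \|k\|_\infty\,\|\tau_{-\eta} h - h\|_{L^1(\R^d)}$, together with continuity of translation in $L^1$, gives uniform continuity on all of $\R^d$.

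Putting these three facts together, $p_t$ agrees with a continuous function on an open neighborhood of $x_0$ and is therefore continuous at $x_0$. Since $x_0 \in \R^d \setminus \{0\}$ was arbitrary, this yields the claim. The argument has no serious obstacle; the only step calling for a brief verification is the $L^\infty$ bound on $g_\eps$, which is immediate from radial non-increasingness alone. In particular no appeal to the finer estimates of $p_t$ from \cite{BGR2013} is needed, and the proof stays within the minimal hypotheses stated in the lemma.
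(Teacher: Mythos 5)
Your argument is correct and runs on the same engine as the paper's: both exploit the Chapman--Kolmogorov identity $p_t = p_{t/2}\ast p_{t/2}$, the fact that a radial nonincreasing probability density is bounded away from the origin (your bound $p_{t/2}(\eps)\le |B(0,\eps)|^{-1}$), and continuity of a convolution of an $L^1$ function against a bounded function. The difference is in how the contribution from near the origin is handled. The paper splits only the integration variable, writing $f_t=f^{(1)}+f^{(2)}$ with $f^{(1)}(x)=\int_{B(0,\delta)}p_{t/2}(x-y)p_{t/2}(y)\,dy$, and then runs an $\ve$-argument: for $x$ in a fixed neighbourhood of $z\ne 0$, the kernel $p_{t/2}(x-y)$ is bounded by $M$, so choosing $\delta$ with $\int_{B(0,\delta)}p_{t/2}<\ve/(4M)$ makes $f^{(1)}$ uniformly small there; $f^{(2)}$ is handled as a convolution of $L^1$ with a bounded function. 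You instead split $p_{t/2}=f_\eps+g_\eps$ symmetrically and expand the square, which isolates the genuinely singular piece $f_\eps\ast f_\eps$; its support lies in $B(0,2\eps)$, so it vanishes identically on a neighbourhood of $x_0$ and no $\ve$-juggling is required. The other two terms are $L^1\ast(L^1\cap L^\infty)$ and hence continuous everywhere. In effect you prove continuity of the near-origin contribution outright (by showing it is zero near $x_0$) rather than making it small, which is marginally tidier; both proofs otherwise rest on the same two facts. One small point both proofs leave implicit but that is worth keeping in mind: the identity $p_t=p_{t/2}\ast p_{t/2}$ holds a.e.\ between the chosen radial nonincreasing version of $p_t$ and the convolution, and one then observes that a radial nonincreasing function agreeing a.e.\ with a continuous function on an open set away from the origin must itself be continuous there.
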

\begin{proof} Let $p_t$ be the distribution of $X_t$.
It is well known that under above assumptions for each $t > 0$ the measure $p_t$ has a radial, nonincreasing density function $p_t(x)$ on $\R^d \setminus \{0\}$ and $p_t$ has no atom at $\{0\}$. 

Let us denote $f_t(x) = \int_{\R^d} p_{t/2}(x-y) p_{t/2}(y) \, dy$, $t > 0$, $x \in \R^d$. We have $p_t(x) = f_t(x)$ a.s. so it is enough to show that for each $t > 0$ the function $f_t$ is continuous on $\R^d \setminus \{0\}$. 

Fix $t > 0$, $z \in \R^d$, $z \ne 0$ and $\ve > 0$. Let $M = \sup_{y \in B^c(0,|z|/2)} p_{t/2}(y)$. Take $\delta \in (0,|z|/4)$ such that $\int_{B(0,\delta)} p_{t/2}(y) \, dy < \ve/(4M)$. For any $x \in B(z,|z|/4)$ we have
\begin{equation}
\label{smallball}
\int_{B(0,\delta)} p_{t/2}(x-y) p_{t/2}(y) \, dy \le M \int_{B(0,\delta)} p_{t/2}(y) \, dy < \ve/4.
\end{equation}
Denote $f^{(1)}(x) = \int_{B(0,\delta)} p_{t/2}(x-y) p_{t/2}(y) \, dy$ and $f^{(2)}(x) = \int_{B^c(0,\delta)} p_{t/2}(x-y) p_{t/2}(y) \, dy$. (\ref{smallball}) implies that for $x \in B(z,|z|/4)$ we have $|f^{(1)}(x) - f^{(1)}(z)| \le \ve/2$. On the other hand note that $f^{(2)}(x)$ is the convolution of the function $p_{t/2}(y) \in L^1(\R^d)$ and the bounded function $1_{[\delta,\infty)}(|y|)p_{t/2}(y)$. Hence $f^{(2)}(x)$ is continuous on $\R^d$. It follows that $f_t(x) = f^{(1)}(x) + f^{(2)}(x)$ is continuous at $x = z$.
\end{proof}

\begin{lemma}
\label{contunimodal}
For any $\ve \in (0,1]$ let $f_{\ve} \in L^1(\R^d)$ and let $f \in L^1(\R^d)$. Assume that all $f_{\ve}$, $f$ are nonnegative, continuous, radial, radially nonincreasing and $f_{\ve} \to f$ weakly as $ \ve \to 0$ (as measures on $\R^d$). Then the convergence is  pointwise at any $x \neq 0$.
\end{lemma}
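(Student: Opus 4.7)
The plan is to use radial monotonicity to sandwich $f_\ve(r)$ (where $r=|x|$) between averages of $f_\ve$ over thin annuli just inside and just outside the sphere of radius $r$, and then pass to the limit using weak convergence and continuity of the target $f$.

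Fix $x\neq 0$ and set $r=|x|>0$. For $0<s<r<t$ put
$$
A^-(s)=B(0,r)\setminus \overline{B}(0,s),\qquad A^+(t)=B(0,t)\setminus \overline{B}(0,r).
$$
Because $f_\ve$ is radial and radially nonincreasing, $f_\ve(y)\ge f_\ve(r)$ on $A^-(s)$ and $f_\ve(y)\le f_\ve(r)$ on $A^+(t)$; integrating yields the sandwich
$$
\frac{1}{|A^+(t)|}\int_{A^+(t)} f_\ve(y)\,dy\;\le\; f_\ve(r)\;\le\;\frac{1}{|A^-(s)|}\int_{A^-(s)} f_\ve(y)\,dy.
$$
I would first fix $s<r<t$ and let $\ve\to 0$: by weak convergence the outer averages converge to the corresponding averages of $f$. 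Then, exploiting continuity of $f$ at $r$, I would let $s\nearrow r$ and $t\searrow r$ so that both averages of $f$ tend to $f(r)$. A standard two-parameter argument (first choose $s,t$ making the $f$-averages within $\eta$ of $f(r)$, then choose $\ve$ making the $f_\ve$-averages within $\eta$ of the $f$-averages) yields $|f_\ve(r)-f(r)|<2\eta$ for all sufficiently small $\ve$.

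The only nontrivial point is passing from weak convergence (tested against continuous functions) to convergence of integrals over the annuli $A^\pm$, whose indicators are discontinuous. This is the Portmanteau step: since $f$ is continuous and lies in $L^1$, the limiting measure $f(y)\,dy$ is absolutely continuous and puts no mass on the spheres $\partial A^\pm$. Approximating $\mathds{1}_{A^\pm}$ from below and above by continuous compactly supported functions $\phi_k^-\nearrow \mathds{1}_{A^\pm}$ and $\phi_k^+\searrow \mathds{1}_{\overline{A^\pm}}$, weak convergence gives $\int \phi_k^{\pm} f_\ve\,dy \to \int \phi_k^{\pm} f\,dy$, and monotone convergence forces both bounds to coincide with $\int_{A^\pm} f\,dy$. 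This legitimizes the sandwich and completes the proof.
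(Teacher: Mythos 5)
Your proof is correct and follows essentially the same route as the paper: bound $f_\ve(r)$ by averages over thin annuli adjacent to the sphere $|x|=r$ using radial monotonicity, pass to the limit via weak convergence (justified through Portmanteau, since the limit measure is absolutely continuous and charges no spheres), and then shrink the annuli using continuity of $f$. The paper phrases this slightly differently—bounding $\limsup_\ve f_\ve(b)$ and $\liminf_\ve f_\ve(a)$ separately by integrating $f_\ve(r)r^{d-1}$ over $[a,b]$ and then letting $a\nearrow b$ (resp.\ $b\searrow a$)—but this is the same sandwich-by-annuli argument, and your explicit Portmanteau step merely makes visible what the paper leaves implicit.
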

\begin{proof} Let $0<a<b<\infty$. From the weak convergence 
$$\lim_{\ve \to 0}\int_a^{b} f_{\ve}(r) r^{d-1} \, dr =\int_a^{b} f(r) r^{d-1} \, dr. $$
By monotonicity  $\int_a^{b} f(r) r^{d-1} \, dr \le f(a) b^{d-1}(b-a) $ and 
$ f_{\ve}(b) a^{d-1}(b-a) \le \int_a^{b} f_{\ve}(r) r^{d-1} \, dr$. It follows that
$$\limsup_{\ve \to 0}  f_{\ve}(b) a^{d-1}\le f(a) b^{d-1}.$$
Using continuity of $f$ and passing $a\nearrow b$ we obtain 
$\limsup_{\ve \to 0}  f_{\ve}(b) \le f(b)$. By a symmetric argument we have 
$\liminf_{\ve \to 0}  f_{\ve}(a) \ge f(a)$.
\end{proof}

Now we will show Proposition \ref{classes}. 

\begin{proof}[Proof of Proposition \ref{classes}]

First, we show that assumptions (A1) imply (A).  If $d \ge 3$ then \cite[Theorem 1]{G2013} gives (H2). If $d \le 2$ then Theorem \ref{dertransition} and \cite[Corollary 6]{G2013} gives (H2).
 
In the next step we prove that assumptions (A2) imply (A). Recall that conditions (H5), (H6), (H7), (H8) are exactly the same as conditions (A1), (A2), (A3), (1.2) in \cite{KM2012}. (H2) follows from \cite[Theorem 1.2]{KM2012}. Remark \ref{remsubordinated} implies that $\nu(r)$ is nonincreasing, absolutely continuous and $-\nu'(r)/r$ is nonincreasing. Now we will show that $\nu(r) \le a \nu(2r), r \in (0,1]$ for some constant $a$.  It is clear that the scaling property for  $\phi' $ (H7) implies that  $\phi'( r^{-2})\le a' \phi'( (2r)^{-2}), \,  r\in (0,1/\sqrt{4\theta_0}]$. Since  $\nu(r) \approx \phi'(r^{-2}) r^{-d-2}, r \in (0,1]$ (see \cite[Proposition 4.2]{KM2012})) we obtain there is a constant $a''$ such that $\nu(r)\le a'' \nu(2r), r\in (0,1/\sqrt{4\theta_0}]$. Clearly this inequality holds for all $r \in (0,1]$ with (possibly) a different constant. 

Finally, we justify that assumptions (A3) imply (A). This again follows from arguments presented in the paper by P. Kim and A. Mimica \cite{KM2012}. (H4) and (H10) imply (H5) and (H6). (H4) and (H10) imply also (H8), see \cite[Remark 4.3]{KM2012}. So (H4), (H5), (H6), (H7), (H8) hold. Hence we can use \cite[Theorem 1.2]{KM2012} and get (H2). Remark \ref{remsubordinated} implies that $\nu(r)$ is nonincreasing, absolutely continuous and $-\nu'(r)/r$ is nonincreasing. The fact that $\nu(r) \le a \nu(2r), r \in (0,1]$ for some constant $a$ can be shown in the same way as in case (A2).
\end{proof}

\begin{remark}
\label{constants}
All constants appearing in this paper are positive and finite. We write $\kappa = \kappa(a,\ldots,z)$ to emphasize that $\kappa$ depends only on $a,\ldots,z$. We adopt the convention that constants denoted by $c$ (or $c_1$, $c_2$) may change their value from one use to the next.
In the whole paper, unless is explicitly stated otherwise, we understand that constants denoted by $c$ (or $c_1$, $c_2$) depend on $d, a_1, a_2$, where $a_1, a_2$ appear in (H1) and 
Definition \ref{Harnack}, respectively.
In particular, it  applies to  the constant $c$ in (\ref{maineq}). 
\end{remark}

\section{The derivative of the transition density}

We denote the Fourier transform of $f \in L^1(\R^d)$ by $\calF f(y) = \int_{\R^d} e^{-ixy} f(x) \, dx$, $y \in \R^d$ and the inverse Fourier transform of $f \in L^1(\R^d)$ by $\tcalF f(y) = (2 \pi)^{-d} \int_{\R^d} e^{ixy} f(x) \, dx$, $y \in \R^d$. It is well known that for any real, radial $f \in L^1(\R^d)$ we have $\calF f(y) = \calFd f(|y|)$, $y \in \R^d$, $y \ne 0$, where
$$
\calFd f(R) = (2\pi)^{d/2} \int_0^{\infty} \frac{J_{\frac{d-2}{2}}(rR)}{(rR)^{\frac{d-2}{2}}} f(r) r^{d-1} \, dr, \quad R>0.
$$
Here $J_{\alpha}$ is the Bessel function of order $\alpha$. Similarly for any real, radial $f \in L^1(\R^d)$ we have $\tcalF f(y) = \tcalFd f(|y|)$, $y \in \R^d$, $y \ne 0$, where $\tcalFd f(R) = (2 \pi)^{-d} \calFd f(R)$, $R > 0$.

We will use the following result from \cite{GT2013}. Let $f:[0,\infty) \to \R$ be a Borel function satisfying  $\int_0^{\infty} |f(r)| (r^{d-1} + r^{d+1}) dr < \infty$. Then we have
\begin{equation}
\label{dimensiond2}
\frac{d}{dR} (\tcalFd f)(R) = -2 \pi R \tcalFdd f(R), \quad R>0.
\end{equation}

We first prove Proposition \ref{dertransitionold} which is a version of Theorem \ref{dertransition} with slightly changed assumptions. We will use this proposition in the proof of Theorem \ref{dertransition} but it seems that Proposition \ref{dertransitionold} is of independent interest.
\begin{proposition}
\label{dertransitionold}
 Let $X$ be a pure-jump isotropic L{\'e}vy process in $\R^d$ with the characteristic exponent $\psi$ and the transition density $p_t(x) = p_t(|x|)$. We assume that its L{\'e}vy measure  has the density $\nu(x) = \nu(|x|)$. We further assume that $\psi$ satisfies $\lim_{\rho \to \infty} ({\psi(\rho)}/{\log(\rho)}) = \infty$ and $\nu(r)$ is nonincreasing.  Then there exists a L{\'e}vy process $X^{(d+2)}$ in $\R^{d+2}$ with the characteristic exponent $\psi^{(d+2)}(\xi) = \psi(|\xi|)$, $\xi \in \R^{d+2}$ and the transition density $p_{t}^{(d+2)}(x) = p_{t}^{(d+2)}(|x|)$ satisfying
$$
p_t^{(d+2)}(r) = \frac{-1}{2 \pi r} \frac{d}{dr} p_t(r), \quad \quad r > 0.
$$
\end{proposition}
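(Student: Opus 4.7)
The plan is to exploit the Fourier identity \eqref{dimensiond2}, which directly links the radial inverse Fourier transforms in dimensions $d$ and $d+2$, and then verify that the resulting function is indeed a probability density corresponding to a convolution semigroup.

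First I would use the growth hypothesis $\psi(\rho)/\log \rho \to \infty$ to secure the necessary integrability. This gives $\int_0^\infty e^{-t\psi(\rho)}(\rho^{d-1}+\rho^{d+1})\,d\rho < \infty$ for every $t>0$, which both (i) makes $e^{-t\psi(|\xi|)}$ integrable on $\R^d$ so that $p_t(r) = \tcalFd[e^{-t\psi(\cdot)}](r)$ is continuous and $C^1$ (one can differentiate under the integral sign since an additional factor of $\rho$ appears), and (ii) fulfills the integrability hypothesis required by \eqref{dimensiond2} for the function $f(R)=e^{-t\psi(R)}$.

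Next, I would apply \eqref{dimensiond2} with $f(R)=e^{-t\psi(R)}$ to obtain
\begin{equation*}
\frac{d}{dr} p_t(r) \;=\; \frac{d}{dr}\,\tcalFd\bigl[e^{-t\psi(\cdot)}\bigr](r) \;=\; -2\pi r \,\tcalFdd\bigl[e^{-t\psi(\cdot)}\bigr](r),
\end{equation*}
so that the candidate density
\begin{equation*}
p_t^{(d+2)}(r) \;:=\; -\frac{1}{2\pi r}\,\frac{d}{dr} p_t(r) \;=\; \tcalFdd\bigl[e^{-t\psi(\cdot)}\bigr](r)
\end{equation*}
is exactly the $(d+2)$-dimensional radial inverse Fourier transform of $e^{-t\psi(|\xi|)}$. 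The required formula for $p_t^{(d+2)}$ then holds by construction; the remaining task is to show it defines a Lévy process.

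Then I would verify that $p_t^{(d+2)}$ is a probability density on $\R^{d+2}$. Nonnegativity uses the isotropic unimodality of $\nu$ (\cite{W1983}): since the Lévy density $\nu(r)$ is nonincreasing, the transition measure $p_t(dx)$ is isotropic unimodal, so $p_t(r)$ is nonincreasing in $r$, hence $-\frac{d}{dr}p_t(r)\ge 0$. Integrating to $1$ follows from Fourier inversion:
\begin{equation*}
\int_{\R^{d+2}} p_t^{(d+2)}(x)\,dx \;=\; \calF\bigl[p_t^{(d+2)}\bigr](0) \;=\; e^{-t\psi(0)} \;=\; 1.
\end{equation*}
Finally, the family $\mu_t(dx) = p_t^{(d+2)}(x)\,dx$ is a convolution semigroup on $\R^{d+2}$: the semigroup property $\mu_t * \mu_s = \mu_{t+s}$ follows from $e^{-(t+s)\psi(|\xi|)} = e^{-t\psi(|\xi|)} e^{-s\psi(|\xi|)}$ via uniqueness of the Fourier transform, and weak continuity $\mu_t \to \delta_0$ as $t\to 0^+$ follows from pointwise convergence of the characteristic functions and Lévy's continuity theorem. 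This yields a Lévy process $X^{(d+2)}$ in $\R^{d+2}$ with characteristic exponent $\psi(|\xi|)$ and transition density $p_t^{(d+2)}$, as desired.

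The main obstacle I expect is the positivity of $p_t^{(d+2)}$: everything else is a clean manipulation of Fourier integrals, but the fact that the radial derivative of $p_t$ has a definite sign is not automatic from Fourier analysis and relies crucially on the unimodality input from Watanabe's theorem. The integrability conditions needed for \eqref{dimensiond2} are straightforward but must be carefully tied to the $\psi(\rho)/\log\rho \to \infty$ hypothesis, which is the reason this assumption is weakened in the statement of Theorem~\ref{dertransition} where the full assumptions (A) provide more flexible tools.
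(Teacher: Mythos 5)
Your proposal is correct and follows essentially the same route as the paper: establish integrability of $e^{-t\psi(\cdot)}$ against $\rho^{d-1}+\rho^{d+1}$ from the growth hypothesis, apply the radial Fourier identity \eqref{dimensiond2}, obtain nonnegativity from the fact that $\nu(r)$ nonincreasing forces $p_t(r)$ nonincreasing, use $\psi(0)=0$ for total mass one, and then check the semigroup and weak-continuity properties. The only cosmetic difference is that you cite Watanabe and Lévy's continuity theorem explicitly where the paper leaves them implicit.
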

\begin{proof}
Put $s_t(r):=e^{-t\psi(r)}$, $r \ge 0$. By the fact that $\psi$ satisfies $\lim_{\rho \to \infty} ({\psi(\rho)}/{\log(\rho)}) = \infty$ we obtain that $\int_0^{\infty} |s_t(r)| (r^{d-1} + r^{d+1}) dr < \infty$, for any $t > 0$. We have $p_t(x) = (2 \pi)^{-d} \int_{\R^d} e^{ixy} e^{-t \psi(y)} \, dy$, $x \in \R^d$, so $p_t(R) = \tcalFd s_t(R)$, $R > 0$.

Now let us define
$$
p_t^{(d+2)}(x)=(2 \pi)^{-d-2} \int_{\R^{d+2}} e^{ixy} e^{-t \psi(y)} \, dy, \quad x \in \R^{d+2}, \, t > 0.
$$
By (\ref{dimensiond2}) we have
\begin{equation*}
p_t^{(d+2)}(R) = \tcalFdd s_t(R) =  \frac{-1}{2 \pi R} \frac{d}{dR}(\tcalFd s_t)(R)
= \frac{-1}{2 \pi R} \frac{d}{dR} p_t(R).
\end{equation*}
Note that $p_t(R)$ is nonincreasing so $p_t^{(d+2)}(R) \ge 0$, $R > 0$.

It follows that $\calF p_t^{(d+2)}(x) = \calF \tcalF(e^{-t\psi(\cdot)})(x) = e^{-t\psi(x)}$, $x \in \R^{d+2}$, $t > 0$. Since $\psi(0)=0$ we observe that $ p_t^{(d+2)}(x)$ is a probability density. Moreover, this implies that $p_t^{(d+2)} \ast p_s^{(d+2)} = p_{t+s}^{(d+2)}$, $s,t > 0$ and $p_t^{(d+2)}$ tends weakly to $\delta_0$ as $t \to 0$, where $\delta_0$ is the Dirac delta at $0$.

In consequence there exists a L{\'e}vy process $\{X_t^{(d+2)}\}_{t \ge 0}$ in $\R^{d+2}$ with the transition density $p_t^{(d+2)}(x)$ and the L{\'e}vy-Khinchin exponent $\psi$. 

\end{proof}

\begin{proof}[proof of Theorem \ref{dertransition}]

Let us define
$$
\nu^{(d+2)}(R) = -\frac{1}{2 \pi R} \frac{d \nu}{dR}(R), \quad R > 0,
$$
and $\nu^{(d+2)}(x) = \nu^{(d+2)}(|x|)$, $x \in \R^{d+2}$, $x \ne 0$. Let $\nu^{(d+2)}$ be the measure on $\R^{d+2}$ given by $\nu^{(d+2)}(\{0\}) = 0$ and $\nu^{(d+2)}(dx) = \nu^{(d+2)}(x)dx$, $x \in \R^{d+2}$, $x \ne 0$.

Now we will show that $\int_{R^{d+2}} (1 \wedge |x|^2) \, \nu^{(d+2)}(dx) < \infty$.
Clearly, $ (-\nu)'(|x|) (2\pi|x|)^{-1} \ge 0$. Note that we have to show $\int_{\R^{d+2}} (-\nu)'(|x|) (2\pi|x|)^{-1} (1 \wedge |x|^2) \, dx < \infty$. It is enough to prove $\int_{0}^1  (-\nu)'(r) r^{d+2} \, dr < \infty$ and $\int_{1}^{\infty} (-\nu)'(r) r^{d} \, dr < \infty$.  Integrating by parts we get
$$
\int_{\varepsilon}^1 (-\nu)'(r) r^{d+2} \, dr =  - \nu(r) r^{d+2}|_{\varepsilon}^1 + (d+2) \int_{\varepsilon}^1 \nu(r) r^{d+1} \, dr,
$$
where $\varepsilon \in (0,1)$ is arbitrary. Since $\int_{0}^1 \nu(r) r^{d+1} \, dr < \infty$ we must have $\liminf_{r \to 0} \nu(r) r^{d+2} = 0$. It follows that $\int_{0}^1  (-\nu)'(r) r^{d+2} \, dr < \infty$. Again integrating by parts we obtain
\begin{equation*}
\int_{1}^{N}  (-\nu)'(r) r^{d} \, dr = - \nu(r) r^{d}|_{1}^N + d \int_{1}^N \nu(r) r^{d-1} \, dr,
\end{equation*}
where $N > 1$ is arbitrary. It follows that $\int_{1}^{\infty} (-\nu)'(r) r^{d} \, dr < \infty$.

Hence the measure $\nu^{(d+2)}$ satisfies the conditions of a L{\'e}vy measure  in $\R^{d+2}$. Let $X^{(d+2)} = (X^{(d+2)}_t, t \ge 0)$ be a pure-jump Levy process in $\R^{d+2}$ with a L{\'e}vy measure $\nu^{(d+2)}$. One can easily check that $\nu^{(d+2)}(\R^{d+2}) = \infty$. Indeed,
\begin{equation*}
\int_{\epsilon}^{1}  (-\nu)'(r) r^{d} \, dr = - \nu(r) r^{d}|_{\epsilon}^1 + d \int_{\epsilon}^{1} \nu(r) r^{d-1} \, dr\to \infty, \quad \epsilon\searrow 0. 
\end{equation*}

 Note that the Levy measure $\nu^{(d+2)}$ has the density which is radial and radially nonincreasing. Let $p^{(d+2)}_t$ be the distribution of $X^{(d+2)}_t$.
It follows that for each $t > 0$ the measure $p^{(d+2)}_t$ has a radial, nonincreasing bounded density function $p^{(d+2)}_t(x) = p^{(d+2)}_t(|x|)$ on $\R^{d+2}$. 

Let $\psi^{(d+2)}$ be the characteristic exponent for the process $X^{(d+2)}$. Now our aim is to show that $\psi^{(d+2)}(R) = \psi(R)$, $R > 0$. We have 
$$
\psi^{(d+2)}(\xi) = \int_{\R^{d+2}} (1 - \cos(\xi x)) \, \nu^{(d+2)}(x) \, dx, \quad \xi \in \R^{d+2}.
$$

So to prove $\psi^{(d+2)}(R) = \psi(R)$, $R > 0$ it is enough to show that 
$$
\psi(\xi) =  \int_{\R^{d+2}} (1- \cos(\xi x)) \left(-\frac{1}{2 \pi |x|} (\nu)'(|x|)\right) \, dx, \quad \xi \in \R^{d+2}.
$$
Hence it is sufficient to prove for $R >0$,
$$
\psi(R) = \int_0^{\infty} \left(\omega_{d+1} - (2\pi)^{\frac{d+2}{2}}(rR)^{-\frac{d}{2}} J_{\frac{d}{2}}(rR) \right)  \left(-\frac{1}{2 \pi r} \frac{d \nu}{dr}(r)\right) r^{d+1} \, dr,
$$
where $\omega_{d} = 2 \pi^{(d+1)/2}/\Gamma((d+1)/2)$.
Since $\nu$ is the density of the L{\'e}vy measure of $X$ in $\R^d$ we have 
\begin{eqnarray*}
\psi(R) &=& \int_0^{\infty} \left(\omega_{d-1} - (2\pi)^{\frac{d}{2}} (rR)^{-(\frac{d-2}{2})} J_{\frac{d-2}{2}}(rR) \right)  \nu(r) r^{d-1} \, dr\\
&=& \int_0^{\infty} \left(\omega_{d-1} r^{d-1} - (2\pi)^{\frac{d}{2}} (rR)^{\frac{d}{2}} J_{\frac{d-2}{2}}(rR) R^{1-d}  \right)  \nu(r) \, dr.
\end{eqnarray*}
Using the property of Bessel functions $\frac{d}{ds}(s^{\alpha}J_{\alpha}(s)) = s^{\alpha}J_{\alpha-1}(s)$ ($\alpha \in (-1/2,\infty)$, $s > 0$) this is equal to 
$$
I=\int_0^{\infty} \frac{d}{dr}\left(\frac{\omega_{d-1}}{d} r^{d} - (2\pi)^{\frac{d}{2}} (rR)^{\frac{d}{2}} J_{\frac{d}{2}}(rR) R^{-d} \right)  \nu(r) \, dr.
$$
By asymptotics of the Bessel function  $J_{\frac{d}{2}}(r)$  at zero we show that 
$$\frac{\omega_{d-1}}{d} r^{d} - (2\pi)^{\frac{d}{2}} (rR)^{\frac{d}{2}} J_{\frac{d}{2}}(rR) R^{-d}\approx C r^{d+2} R^{2+d/2}. $$ Hence, applying (\ref{0_Levy}),
$$\lim_{r\to 0}\left(\frac{\omega_{d-1}}{d} r^{d} - (2\pi)^{\frac{d}{2}} (rR)^{\frac{d}{2}} J_{\frac{d}{2}}(rR) R^{-d}\right)\nu(r)= \lim_{r\to 0} r^{d+2}\nu(r)=0.$$
Using the fact   the Bessel function  $J_{\frac{d}{2}}(r)$ is bounded  at $\infty$,  we show, applying (\ref{infty_Levy}), that 
$$\lim_{r\to \infty}\left|\frac{\omega_{d-1}}{d} r^{d} - (2\pi)^{\frac{d}{2}} (rR)^{\frac{d}{2}} J_{\frac{d}{2}}(rR) R^{-d}\right|\nu(r)= \lim_{r\to \infty} r^{d}\nu(r)=0.
$$
This justifies that by integrating by parts we obtain 
$$
I=\int_0^{\infty} \left(\omega_{d+1} - (2\pi)^{\frac{d+2}{2}}(rR)^{-\frac{d}{2}} J_{\frac{d}{2}}(rR) \right)  \left(-\frac{1}{2 \pi r} \frac{d \nu}{dr}(r)\right) r^{d+1} \, dr.
$$
So we have finally shown that $X^{(d+2)}$ has the characteristic exponent $\psi$.

Our next aim is to show (\ref{derivativeptr}). 
For any $\ve \in (0,1]$ let $X_{(\ve)} = (X_{(\ve),t}, t \ge 0)$ be the L{\'e}vy process in $\R^d$ with the characteristic exponent $\psi_{\ve}(\xi) = \psi(\xi) + \ve |\xi|$, $\xi \in \R^d$. Let $p_{\ve,t}$ be the distribution of $X_{(\ve),t}$. It follows that for each $t > 0$ the measure $p_{\ve,t}$ has a radial, nonincreasing bounded density function $p_{\ve,t}(x) = p_{\ve,t}(|x|)$ on $\R^{d}$. For any $t > 0$ clearly, $p_{\ve,t} \to p_t$ weakly as $\ve \searrow 0$. All densities $p_{\ve,t}(x)$, $p_t(x)$ are continuous on $\R^{d} \setminus \{0\}$. Hence by Lemma \ref{contunimodal} for any $t > 0$, $x \in \R^d$, $x \ne 0$ we have $p_{\ve,t}(x) \to p_t(x)$  as $\ve \searrow 0$.

By Proposition \ref{dertransitionold} there exists a L{\'e}vy process $X_{(\ve)}^{(d+2)} = (X_{(\ve),t}^{(d+2)}, t \ge 0)$ in $\R^{d+2}$ with the characteristic exponent $\psi_{\ve}^{(d+2)}(\xi) = \psi(|\xi|) + \ve |\xi|$, $\xi \in \R^{d+2}$. Let $p_{\ve,t}^{(d+2)}$ be the distribution of $X_{(\ve),t}^{(d+2)}$. It follows that for each $t > 0$ the measure $p_{\ve,t}^{(d+2)}$ has a radial, nonincreasing bounded density function $p_{\ve,t}^{(d+2)}(x) = p_{\ve,t}^{(d+2)}(|x|)$ on $\R^{d+2}$. For any $t > 0$ clearly, $p_{\ve,t}^{(d+2)} \to p_t^{(d+2)}$ weakly as $\ve \searrow 0$. All densities $p_{\ve,t}^{(d+2)}(x)$, $p_t^{(d+2)}(x)$ are continuous on $\R^{d+2} \setminus \{0\}$. 

Fix $0 < r_1 < r_2 < \infty$. By Proposition \ref{dertransitionold} we have 
$$
p_{\ve,t}(r_2) - p_{\ve,t}(r_1) = \int_{r_1}^{r_2} \frac{\partial}{\partial r} p_{\ve,t}(r) \, dr = 
\int_{r_1}^{r_2} \frac{-1}{2 \pi r} p_{\ve,t}^{(d+2)}(r) \, dr.
$$
Since for any $r > 0$ we have $p_{\ve,t}(r) \to p_t(r)$ as $\ve \searrow 0$ and $p_{\ve,t}^{(d+2)} \to p_t^{(d+2)}$ weakly  as $\ve \searrow 0$ we obtain
$$
p_{t}(r_2) - p_{t}(r_1) = 
\int_{r_1}^{r_2} \frac{-1}{2 \pi r} p_{t}^{(d+2)}(r) \, dr.
$$
  By continuity of $p_{t}^{(d+2)}(r)$ we arrive at (\ref{derivativeptr}).

%Hence
%$$
%\lim_{r_2 \searrow r_1} \frac{p_{t}(r_2) - p_{t}(r_1)}{r_2 - r_1} = 
%\frac{-1}{2 \pi r_1} p_{t}^{(d+2)}(r_1).
%$$
%Similarly one can show 
%$$
%\lim_{r_2 \nearrow r_1} \frac{p_{t}(r_2) - p_{t}(r_1)}{r_2 - r_1} = 
%\frac{-1}{2 \pi r_1} p_{t}^{(d+2)}(r_1).
%$$
%This gives (\ref{derivativeptr}) and finishes the proof.
\end{proof}

\section{The difference process}

Let $X$ be a pure-jump isotropic unimodal L{\'e}vy process in $\R^d$ with  an infinite  L{\'e}vy measure $\nu(dx) = \nu(x) \, dx$. The process has the transition density $p_t(x)$, which as a function of $x$ is also radially nonincreasing. 
We will use the following notation $\hat{x} = (-x_1,x_2,\ldots,x_d)$ for $x = (x_1,x_2,\ldots,x_d)$, $D_+ = \{(x_1,x_2,\ldots,x_d) \in D: \, x_1 > 0\}$, $D_- = \{(x_1,x_2,\ldots,x_d) \in D: \, x_1 < 0\}$ for $D \subset \R^d$. 

The aim of this section is to construct a Markov process $\tilde{X}_t$ on $\R^d_+$ with a sub-Markov transition density $p_{t}(x - y) - p_t(\hat{x} - y)$ and derive its basic properties. We call $\tilde{X}_t$ {\it{the difference process}}. First, we briefly present the construction of this process when $X_t$ is subordinate Brownian motion. In such case  the construction is easy and intuitive. Then we present the construction in the general case. 

First, let us assume that $X_t = B_{S_t}$, where $B_t$ is the Brownian motion in $\R^d$ (with the generator $\Delta$) and $S_t$ is a subordinator independent of $B_t$ with the Laplace exponent $\phi(\lambda)$. Denote by $g_t(x)$ the transition density of $B_t$. The transition density of $X_t$ is given by $p_t(x) = \int_0^{\infty} g_s(x) P(S_t \in ds)$. Let $\tau_{\R^d_+}^B = \inf\{t \ge 0: \, B_t \notin \R^d_+\}$  and $\tilde{B}_t$ be the Brownian motion killed on exiting $\R^d_+$ that is 
\begin{equation*}
\tilde{B}_t = 
\left\{
\begin{array}{ll}
\displaystyle
B_t, & \quad \text{for} \quad t < \tau_{\R^d_+}^B \\
\displaystyle
\partial, & \quad \text{for} \quad t \ge \tau_{\R^d_+}^B.
\end{array}
\right.
\end{equation*}  
Here we augment $\R_+^d$ by 
an extra point $\{\partial\}$ so that $\R^d_+ \cup \{\partial\}$ is a one-point compactification of $\R^d_+$. The sub-Markov transition density of $\tilde{B}_t$ on $\R^d_+$ is given by $g_t(x-y) - g_t(\hat{x}-y)$. Now let us put $\tilde{X}_t = \tilde{B}_{S_t}$. The sub-Markov transition density of $\tilde{X}_t$ on $\R^d_+$ is given by $\int_0^{\infty} (g_s(x-y) - g_s(\hat{x}-y)) P(S_t \in ds) = p_t(x-y) - p_t(\hat{x}-y)$.

Now let us consider the general case i.e. let $X$ be a pure-jump isotropic unimodal  L{\'e}vy process in $\R^d$ with  an infinite L{\'e}vy measure $\nu(dx) = \nu(x) \, dx$ and a transition density $p_t(x)$.

For any $t > 0$, $x,y \in R^d_+$ put
$$
\tilde{p}_t(x,y) = p_{t}(x - y) - p_t(\hat{x} - y).
$$

\begin{lemma}
\label{Chapman}
For any $s, t > 0$, $x,z \in \R^d_+$ we have
\begin{equation}
\label{Chapman1}
\int_{\R^d_+} \tilde{p}_t(x,y) \tilde{p}_s(y,z) \, dy = \tilde{p}_{t+s}(x,z).
\end{equation}
\end{lemma}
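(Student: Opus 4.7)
The strategy is a direct computation exploiting two symmetries: the radial symmetry of $p_t$ (which gives $p_t(\hat u)=p_t(u)$ for all $u\in\R^d$) and the fact that the map $y\mapsto \hat y$ is a measure-preserving bijection between $\R^d_+$ and $\R^d_-$. Together these will let me fold four half-space integrals into two whole-space integrals and then close the computation with the ordinary Chapman--Kolmogorov identity
\[
\int_{\R^d} p_t(x-y)\,p_s(y-z)\,dy = p_{t+s}(x-z),
\]
which holds for the Lévy transition density $p_t$.

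First I would expand the left-hand side of \eqref{Chapman1} as
\[
\int_{\R^d_+}\bigl[p_t(x-y)-p_t(\hat x-y)\bigr]\bigl[p_s(y-z)-p_s(\hat y-z)\bigr]\,dy,
\]
which splits into four integrals over $\R^d_+$. In the two integrals containing $p_s(\hat y-z)$, I would make the change of variables $y\mapsto \hat y$; this is an isometry sending $\R^d_+$ onto $\R^d_-$ (the hyperplane $\{y_1=0\}$ has Lebesgue measure zero and can be discarded). Using the identities $\hat{\hat y}=y$ and $\hat x-\hat y=\widehat{x-y}$ together with $p_t(\hat u)=p_t(u)$, these two integrals become
\[
\int_{\R^d_-} p_t(\hat x-y)\,p_s(y-z)\,dy
\quad\text{and}\quad
\int_{\R^d_-} p_t(x-y)\,p_s(y-z)\,dy,
\]
respectively.

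Next I would pair up the pieces with matching integrands: the $p_t(x-y)p_s(y-z)$ terms over $\R^d_+$ and $\R^d_-$ combine into an integral over all of $\R^d$, and similarly the $p_t(\hat x-y)p_s(y-z)$ terms (with a single overall minus sign coming from the expansion) combine into an integral over $\R^d$. Applying the standard Chapman--Kolmogorov identity for $p_t$ to each of the two whole-space integrals yields
\[
p_{t+s}(x-z)-p_{t+s}(\hat x-z) = \tilde p_{t+s}(x,z),
\]
which is exactly the right-hand side of \eqref{Chapman1}.

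There is no serious obstacle; the only points to verify carefully are the absolute integrability needed to apply Fubini when splitting and recombining the four pieces (which follows from $p_t, p_s\in L^1$ and the nonnegativity of each of the two paired terms after the substitution) and the bookkeeping with the involution $y\mapsto \hat y$ together with the radial symmetry of $p_t$. Once those are in place, the identity falls out by direct substitution.
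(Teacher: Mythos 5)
Your proof is correct and follows essentially the same route as the paper's: expand $\tilde p_t(x,y)\tilde p_s(y,z)$ into four half-space integrals, apply the measure-preserving reflection $y\mapsto\hat y$ together with the radial symmetry of $p_t$ to fold them into whole-space integrals, and close with the Chapman--Kolmogorov identity for $p_t$. The only difference is cosmetic bookkeeping (which two of the four terms you reflect, and whether the hat ends up on $x$ or on $z$ before the final cancellation).
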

\begin{proof}
The left-hand side of (\ref{Chapman1}) equals
\begin{eqnarray*}
&& \int_{\R^d_+} {p}_t(x-y) {p}_s(y-z) \, dy - \int_{\R^d_+} {p}_t(x-y) {p}_s(\hat{y}-z) \, dy - 
\int_{\R^d_+} {p}_t(\hat{x}-y) {p}_s({y}-z) \, dy \\ && + \int_{\R^d_+} {p}_t(\hat{x}-y) {p}_s(\hat{y}-z) \, dy 
= \text{I} - \text{II} - \text{III} + \text{IV}.
\end{eqnarray*}
It is easy to check that 
\begin{eqnarray*}
\text{I} &=& p_{t+s}(x-z) - \int_{\R^d_-} {p}_t(x-y) {p}_s(y-z) \, dy,\\
\text{II} &=& p_{t+s}(\hat{x}-z) - \int_{\R^d_-} {p}_t(x-y) {p}_s(y-\hat{z}) \, dy,\\
\text{III} &=&  \int_{\R^d_-} {p}_t(x-y) {p}_s(y-\hat{z}) \, dy,\\
\text{IV} &=&  \int_{\R^d_-} {p}_t(x-y) {p}_s(y-z) \, dy,
\end{eqnarray*}
which implies the lemma.
\end{proof}

Let $C_0$ be the space of all continuous functions on $\R^d_+$ vanishing at $\partial \R^d_+$ and $\infty$ that is $f \in C_0$ iff $f \in \R^d_+ \to \R$ is continuous and for any $\varepsilon > 0$ there exist $\delta > 0$ and $M > 0$ such that for any $x \in \R^d_+$ if $\dist(x,\partial \R^d_+) < \delta$ or $|x| > M$ then $|f(x)| < \varepsilon$.

For any $f \in C_0$ and $t > 0$ put $\tilde{P}_t f(x) = \int_{\R^d_+} \tilde{p}_t(x,y) f(y) \, dy$, $\tilde{P}_0 f(x) = f(x)$. By Lemma \ref{Chapman} $\tilde{P}_t$ is a semigroup. Extend $f$ by putting $f(x) = -f(\hat{x})$ for $x \in \R_{-}^d$ and $f(x) = 0$ for $x \in \partial \R_+^d$. Note that $\tilde{P}_t f(x) = P_t f(x)$, $x \in R_+^d$ where $P_t f(x) = \int_{\R^d} p_t(x,y) f(y) \, dy$. Using this observation one can show that $\tilde{P}_t C_0 \subset C_0$ and the semigroup $\tilde{P}_t$ is strongly continuous in $t \ge 0$.

Now let us define $\tilde{P}_t(x,A)$, $t \ge 0$, $x \in \R^d_+$, $A \in \calB(\R^d_+)$ by $\tilde{P}_t(x,A) = \int_A \tilde{p}_t(x,y) \, dy$, $t > 0$, and $\tilde{P}_0(x,\cdot) = \delta_x$. By Lemma \ref{Chapman} $\tilde{P}_t(x,A)$ is a sub-Markov transition function on $\R^d_+$.

Let us augment $\R^d_+$ by an extra point $\partial$ so that $\R^d_+ \cup \{\partial\}$ is a one-point compactification of $\R^d_+$. We extend $\tilde{P}_t(x,A)$ to a Markov transition function on $\R^d_+ \cup \{\partial\}$ by setting 
\begin{equation}
\label{cemetary}
\tilde{P}_t(x,A) = 
\left\{
\begin{array}{ll}
\displaystyle
\tilde{P}_t(x,A \cap \R^d_+) + 1_A(\partial) (1 - \tilde{P}_t(x,\R^d_+)), & \quad \text{for} \quad x \in \R^d_+, \\
\displaystyle
1_A(\partial), & \quad \text{for} \quad x = \partial,
\end{array}
\right.
\end{equation} 
for any $A \subset \R^d_+ \cup \{\partial\}$ which is in the $\sigma$-algebra in $\R^d_+ \cup \{\partial\}$ generated by $\calB(\R^d_+)$. Then by standard results (see e.g. \cite[Chapter 1, Theorem 9.4]{BG1968}) there exists a Hunt process $\{\tilde{X}_t\}_{t \ge 0}$ with the state space $\R^d_+$ (augmented by $\{\partial\}$) and the transition function $\tilde{P}_t(x,A)$. We will denote by $\tilde{P}^x$, $\tilde{E}^x$ the probability and the expected value of the process $\tilde{X}_t$ starting from $x$.

Let us note $\tilde{p}_t(x,y) = \tilde{p}_t(y,x)$, $t > 0$, $x,y \in \R^d_+$. Put $\tau_D = \inf\{t > 0: \, \tilde{X}_t \notin D\}$.
\begin{lemma}
\label{regular}
Let $D \subset \R_+^d$ be an open, nonempty set and $z \in \partial D \cap \R^d_+$. If there exists a cone $A$ with vertex $z$ such that $A \cap B(z,r) \subset D^c$ for some $r > 0$ then $\tilde{P}^z(\tau_D = 0) = 1$.
\end{lemma}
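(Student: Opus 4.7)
The plan is to invoke Blumenthal's $0$--$1$ law for the Hunt process $\tilde X$ and reduce the lemma to a quantitative lower bound on $\tilde P^z[\tilde X_t\in D^c\cap\R^d_+]$ for small $t$. That bound in turn comes from directly comparing $\tilde X$ with the underlying isotropic L\'evy process $X$ via the identity
$$
\tilde P^z[\tilde X_t\in U]=P^z[X_t\in U]-P^z[X_t\in \hat U],\qquad \hat U=\{\hat y:y\in U\},
$$
which follows from the definition of $\tilde p_t(z,y)$, the substitution $y\mapsto\hat y$, and the radiality of $p_t$.

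I would first invoke Blumenthal's law on the germ $\sigma$-algebra, so that $\tilde P^z[\tau_D=0]\in\{0,1\}$, and use
$\tilde P^z[\tau_D>t]\le \tilde P^z[\tilde X_t\in D]\le 1-\tilde P^z[\tilde X_t\in D^c\cap\R^d_+]$
to reduce to showing $\liminf_{t\to 0^+}\tilde P^z[\tilde X_t\in D^c\cap\R^d_+]>0$. Since $z\in\R^d_+$ means $z_1>0$, I fix $r'\in(0,r\wedge z_1)$ and put $U:=A\cap B(z,r')$; then $U\subset D^c\cap\R^d_+$ and the display above applies. Writing $A=z+\{sv:s\ge 0,\,v\in C\}$ for a set $C\subset S^{d-1}$ of positive surface measure, polar coordinates together with the facts that $p_t$ is a radial probability density on $\R^d$ and has no atom at $0$ (because $\nu$ is infinite) give
$$
P^z[X_t\in A]=|C|\int_0^\infty p_t(r)r^{d-1}\,dr=\frac{|C|}{|S^{d-1}|}=:p_A>0,
$$
constant in $t$. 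Since $X_t\to z$ in probability, $P^z[X_t\in U]\to p_A$. On the other hand, the elementary identity $|\hat y-z|^2-|y-z|^2=4y_1z_1$ yields $|\hat y-z|\ge 2\sqrt{z_1(z_1-r')}>0$ for every $y\in U$, so $P^z[X_t\in\hat U]\le P^z\bigl[|X_t-z|\ge 2\sqrt{z_1(z_1-r')}\bigr]\to 0$. Combining gives $\liminf_{t\to 0^+}\tilde P^z[\tilde X_t\in U]\ge p_A>0$, which via Blumenthal's law forces $\tilde P^z[\tau_D=0]=1$.

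The only point requiring genuine care is the constancy of $P^z[X_t\in A]$ in $t$: it is crucial that $X$ is isotropic, so that in polar coordinates the angular integral decouples from the radial one and the radial piece collapses to the normalisation constant $1/|S^{d-1}|$. The cone hypothesis enters only to guarantee $|C|>0$; everything else --- the radial decomposition, the right-continuity of $X$ at $t=0$, and the separation of $\hat U$ from $z$ --- is routine.
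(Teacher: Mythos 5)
Your proof is correct and follows essentially the same route as the paper: both reduce to Blumenthal's $0$--$1$ law, decompose the difference-process probability as a difference of two $X$-probabilities (you write $P^z[X_t\in U]-P^z[X_t\in\hat U]$, the paper writes $P^z[X_t\in U]-P^{\hat z}[X_t\in U]$ --- identical by radial symmetry), and then use rotational invariance to see the first term stays bounded below while the second vanishes because the reflected object is at positive distance from $z$. The only cosmetic difference is that you pass through $\tilde P^z[\tau_D>t]\le 1-\tilde P^z[\tilde X_t\in U]$ and let $t\to 0$, whereas the paper uses the inclusion $\{\tau_D=0\}\supset\limsup_n\{\tilde X_{1/n}\in U\}$ with reverse Fatou; both give $\tilde P^z[\tau_D=0]>0$, and you also make explicit the constant $p_A=|C|/|S^{d-1}|$ that the paper leaves as an unnamed $\delta(A)$.
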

\begin{proof}
Since $z \in \partial D \cap \R^d_+$ we may assume that $\dist(A \cap B(z,r),\R^d_-) > 0$. We have under $\tilde{P}^z$, $\{\tau_D = 0\} \supset \limsup_{n \to \infty} \{\tilde{X}_{1/n} \in A \cap B(z,r)\}$. Hence
\begin{equation*}
\tilde{P}^z(\tau_D = 0) \ge
\tilde{P}^z(\limsup_{n \to \infty} \{\tilde{X}_{1/n} \in A \cap B(z,r)\}) \ge
\limsup_{n \to \infty} \tilde{P}^z( \tilde{X}_{1/n} \in A \cap B(z,r)).
\end{equation*}
We have 
$$
\tilde{P}^z( \tilde{X}_{1/n} \in A \cap B(z,r)) =
{P}^z( {X}_{1/n} \in A \cap B(z,r)) - {P}^{\hat{z}}( {X}_{1/n} \in A \cap B(z,r)).
$$
By the rotational invariance and right-continuity of paths of $X$ there exists $\delta = \delta(A) > 0$ such that 
$
\limsup_{n \to \infty} {P}^z( {X}_{1/n} \in A \cap B(z,r)) \ge \delta
$.
Again by right-continuity of paths of $X$ and the fact that $\dist(A \cap B(z,r),\R^d_-) > 0$ we have
$
\limsup_{n \to \infty} {P}^{\hat{z}}( {X}_{1/n} \in A \cap B(z,r)) =0
$.
Hence
$$
\tilde{P}^z(\tau_D = 0) \ge \limsup_{n \to \infty} \tilde{P}^z( \tilde{X}_{1/n} \in A \cap B(z,r)) \ge \delta.
$$
Note that the Blumenthal's zero-or-one law holds for $\tilde{X}$. Hence $\tilde{P}^z(\tau_D = 0) = 1$.
\end{proof}

We say that $D \subset \R^d$ satisfies the {\it{outer cone condition}} if for any $z \in \partial D$ there exist $r > 0$ and a cone $A$ with vertex $z$ such that $A \cap B(z,r) \subset D^c$.

Let $D \subset \R^d_+$ be an open, nonempty set satisfying the outer cone condition. For any $t > 0$, $x,y \in D$ we put 
$$
\tilde{p}_D(t,x,y) = \tilde{p}_t(x,y) - \tilde{E}^x\left(\tilde{p}_{t - \tau_D}(\tilde{X}(\tau_D),y), t > \tau_D\right).
$$
It is easy to note that for any fixed $t > 0$, $x \in D$ the function $y \to \tilde{p}_D(t,x,y)$ is continuous in  $D \setminus \{x\}$. Using standard arguments (see e.g. \cite[Chapter II]{CZ1995}) one can show that for any Borel $A \subset D$, $x \in D$ and $t > 0$
\begin{equation}
\label{PtauD}
\tilde{P}^x(\tilde{X}_t \in A, \tau_D > t) = \int_A \tilde{p}_D(t,x,y) \, dy.
\end{equation}
Again using standard arguments and Lemma \ref{regular} we obtain
\begin{equation}
\label{tnD}
\tilde{P}^x(\tilde{X}_t \in A, \tau_D > t) = 
\lim_{n \to \infty} \tilde{P}^x\left(\tilde{X}_{\frac{t}{n}} \in D, \ldots, \tilde{X}_{\frac{(n-1)t}{n}} \in D, \tilde{X}_{t} \in A\right).
\end{equation}

We say that a set $D \subset \R^d$ is symmetric if for any $x \in D$ we have $\hat{x} \in D$.

\begin{lemma}
\label{Pdifference}
Assume that $D \subset \R^d$ is an open, symmetric, nonempty set satisfying the outer cone condition,  $x \in D_+$, $0 < t_1 < \ldots < t_n$, $n \in \N$, $A \subset D_+$. Then we have
\begin{eqnarray*}
&&\tilde{P}^x\left(\tilde{X}_{t_1} \in D_+, \ldots, \tilde{X}_{t_{n -1}} \in D_+, \tilde{X}_{t_n} \in A\right)\\
&=&  {P}^x\left({X}_{t_1} \in D, \ldots {X}_{t_{n -1}} \in D, {X}_{t_n} \in A\right)
- {P}^{\hat{x}}\left({X}_{t_1} \in D, \ldots, {X}_{t_{n -1}} \in D, {X}_{t_n} \in A\right).
\end{eqnarray*}
\end{lemma}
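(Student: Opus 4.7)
The plan is to proceed by induction on $n$, using the Markov property of $\tilde X$ (which is a Hunt process by the construction preceding the lemma), the radial symmetry of the transition density $p_t$, and the symmetry of $D$.

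\textbf{Base case $n=1$.} This is just the defining formula of the transition density of $\tilde X$: for any Borel $A\subset D_+$,
\[
\tilde P^x(\tilde X_{t_1}\in A)=\int_A \tilde p_{t_1}(x,y)\,dy=\int_A p_{t_1}(x-y)\,dy-\int_A p_{t_1}(\hat x-y)\,dy,
\]
and the two integrals on the right are $P^x(X_{t_1}\in A)$ and $P^{\hat x}(X_{t_1}\in A)$ respectively.

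\textbf{Inductive step.} Assume the formula holds for $n-1$. By the Markov property of $\tilde X$ (together with the fact that once $\tilde X$ hits $\partial$ it stays there, so the integration against $\tilde P_{t_1}(x,\cdot)$ effectively runs over $D_+$ only),
\[
\tilde P^x(\tilde X_{t_1}\in D_+,\dots,\tilde X_{t_n}\in A)=\int_{D_+} \tilde p_{t_1}(x,y)\,\tilde P^y(\tilde X_{s_1}\in D_+,\dots,\tilde X_{s_{n-1}}\in A)\,dy,
\]
where $s_j=t_{j+1}-t_1$. Setting $G(y):=P^y(X_{s_1}\in D,\dots,X_{s_{n-1}}\in A)$ for $y\in D$, the induction hypothesis (applied at $y\in D_+$) gives that the inner factor equals $G(y)-G(\hat y)$. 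Substituting and expanding $\tilde p_{t_1}(x,y)=p_{t_1}(x-y)-p_{t_1}(\hat x-y)$ produces four integrals over $D_+$.

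\textbf{The key change of variables.} In the two integrals that contain $G(\hat y)$, substitute $y\mapsto\hat y$. Since $D$ is symmetric, this maps $D_+$ onto $D_-$; and since $p_{t_1}$ is radial, $p_{t_1}(x-\hat y)=p_{t_1}(\hat x-y)$ and $p_{t_1}(\hat x-\hat y)=p_{t_1}(x-y)$. Hence
\[
\int_{D_+}\bigl[p_{t_1}(x-y)-p_{t_1}(\hat x-y)\bigr]G(\hat y)\,dy=-\int_{D_-}\bigl[p_{t_1}(x-y)-p_{t_1}(\hat x-y)\bigr]G(y)\,dy.
\]
Adding this to the $G(y)$ contribution, the integrals over $D_+$ and $D_-$ combine into a single integral over $D$:
\[
\int_D p_{t_1}(x-y) G(y)\,dy-\int_D p_{t_1}(\hat x-y) G(y)\,dy.
\]
By the Markov property of $X$ applied at time $t_1$, these are exactly $P^x(X_{t_1}\in D,X_{t_2}\in D,\dots,X_{t_n}\in A)$ and $P^{\hat x}(X_{t_1}\in D,X_{t_2}\in D,\dots,X_{t_n}\in A)$, completing the induction.

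\textbf{Main obstacle.} The only nontrivial move is the symmetrisation step: marshalling the four integrals and recognising that the radial identity $p_{t_1}(x-\hat y)=p_{t_1}(\hat x-y)$, combined with the symmetry of $D$, converts an integral of $G(\hat y)$ over $D_+$ into an integral of $G(y)$ over $D_-$ with the opposite sign of the bracket. Once this is in hand, the rest is the Markov property applied forward for $\tilde X$ and backward for $X$.
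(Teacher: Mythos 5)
Your proof is correct and follows essentially the same route as the paper: induction on $n$, the Markov property of $\tilde X$ to reduce to the inductive hypothesis, then the substitution $y\mapsto\hat y$ combined with the radial identities $p_{t_1}(x-\hat y)=p_{t_1}(\hat x-y)$ and $p_{t_1}(\hat x-\hat y)=p_{t_1}(x-y)$ to merge the $G(\hat y)$ integrals over $D_+$ into integrals of $G(y)$ over $D_-$, and finally the Markov property of $X$. The paper records the symmetrisation as two separate integral identities (one for each of the cross terms) rather than your single bracket identity, but the underlying computation is the same.
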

\begin{proof}
We will prove it by induction. For $n=1$ we have
$$
\tilde{P}^x\left(\tilde{X}_{t_1} \in A\right) = \int_A p_{t_1}(x-y) - p_{t_1}(\hat{x}-y) \, dy 
= P^x(X_{t_1} \in A) - P^{\hat{x}}(X_{t_1} \in A).
$$
Assume that the assertion of the lemma holds for $n$, we will show it for $n + 1$. Let $0 < t_1 < \ldots < t_n < t_{n+1}$. By the Markov property for $\tilde{X}_t$ we have
\begin{eqnarray}
\nonumber
&&\tilde{P}^x\left(\tilde{X}_{t_1} \in D_+, \ldots, \tilde{X}_{t_{n}} \in D_+, \tilde{X}_{t_{n+1}} \in A\right)\\
\label{Dtn1}
&=&  \tilde{E}^x\left(\tilde{X}_{t_1} \in D_+,  \, \, \tilde{P}^{\tilde{X}_{t_1}}\left(\tilde{X}_{t_2 - t_1} \in D_+, \ldots, \tilde{X}_{t_{n} - t_1} \in D_+, \tilde{X}_{t_{n+1} - t_1} \in A\right)\right).
\end{eqnarray}
For any $x \in \R^d$ put $f(x) = {P}^{x}({X}_{t_2 - t_1} \in D, \ldots, {X}_{t_{n} - t_1} \in D, {X}_{t_{n+1} - t_1} \in A)$.
By our induction hypothesis (\ref{Dtn1}) equals
\begin{eqnarray}
\label{Dtn1a}
&& \tilde{E}^x\left(\tilde{X}_{t_1} \in D_+, \, \,  f\left(\tilde{X}_{t_1}\right)\right) - \tilde{E}^x\left(\tilde{X}_{t_1} \in D_+, \, \, f\left(\widehat{\tilde{X}_{t_1}}\right)\right) \\
\nonumber
&=& \int_{D_+} (p_{t_1}(x-y) - p_{t_1}(\hat{x}-y)) (f(y) -f(\hat{y})) \, dy \\
\nonumber
&=& \int_{D_+} p_{t_1}(x-y) f(y) \, dy - \int_{D_+} p_{t_1}(x-y) f(\hat{y}) \, dy \\
\nonumber
&& - \int_{D_+} p_{t_1}(\hat{x}-y) f(y) \, dy + \int_{D_+} p_{t_1}(\hat{x}-y) f(\hat{y}) \, dy.
\end{eqnarray}
It is easy to verify that 
$$
\int_{D_+} p_{t_1}(x-y) f(\hat{y}) = \int_{D_-} p_{t_1}(\hat{x}-y) f(y) \, dy,
$$
$$
\int_{D_+} p_{t_1}(\hat{x}-y) f(\hat{y}) \, dy = \int_{D_-} p_{t_1}(x-y) f(y) \, dy.
$$
So (\ref{Dtn1a}) equals
\begin{eqnarray*}
&& \int_{D} p_{t_1}(x-y) f(y) \, dy - \int_{D} p_{t_1}(\hat{x}-y) f({y}) \, dy\\
&=& {E}^x\left({X}_{t_1} \in D,  \, \, f({X}_{t_1})\right) - {E}^{\hat{x}}\left({X}_{t_1} \in D,  \, \, f({X}_{t_1})\right)\\
&=& {P}^x\left({X}_{t_1} \in D, \ldots, {X}_{t_{n}} \in D, {X}_{t_{n+1}} \in A\right)
- {P}^{\hat{x}}\left({X}_{t_1} \in D, \ldots, {X}_{t_{n}} \in D, {X}_{t_{n+1}} \in A\right).
\end{eqnarray*}
\end{proof}

Let $D \subset \R^d$ be an open, nonempty, symmetric set satisfying the outer cone condition . Using the above lemma, (\ref{tnD}), (\ref{PtauD}) and continuity of $y \to \tilde{p}_{D_+}(t,x,y)$ on $D \setminus \{x\}$ we obtain that for any $t > 0$, $x,y \in D_+$, we have
$$
\tilde{p}_{D_+}(t,x,y) = p_D(t,x,y) - p_D(t,\hat{x},y).
$$
It follows that $\tilde{p}_{D_+}(t,x,y) \le p_D(t,x,y)$.

Now let $D \subset \R^d$ be an open, bounded, nonempty, symmetric set. For any $x \in D_+$ we have $\tilde{E}^x(\tau_{D_+}) < \infty$. Indeed, 
$$
\tilde{E}^x(\tau_{D_+}) = \int_0^{\infty} \int_{D_+} \tilde{p}_{D_+}(t,x,y) \, dy \, dt \le \int_0^{\infty} \int_{D} {p}_{D}(t,x,y) \, dy \, dt = {E}^x(\tau_{D}) < \infty.
$$
For $x,y \in D_+$ we define the Green function for $\tilde{X}_t$ and $D_+$ by
$\tilde{G}_{D_+}(x,y) = \int_0^{\infty} \tilde{p}_{D_+}(t,x,y) \, dt$. For any $x,y \in D_+$, $x \ne y$ we have
$$
0 < \tilde{G}_{D_+}(x,y) = G_D(x,y) - G_D(\hat{x},y) < G_D(x,y).
$$
Moreover, by $\tilde{p}_{D_+}(t,x,y)\le \tilde{p}(t,x,y)$, we have a trivial bound 

$$
0 < \tilde{G}_{D_+}(x,y) \le \int_0^{\infty} \tilde{p}(t,x,y) \, dt.
$$
Using standard arguments for any Borel, bounded $f:D_+ \to \R$ we have
$$
\tilde{E}^{x} \int_0^{\tau_{D_+}} f(\tilde{X}_t) \, dt = 
\int_D \tilde{G}_{D_+}(x,y) f(y) \, dy, \quad x \in D_+.
$$
For any $x,y \in D_+$, $x \ne y$ and a Borel set $A \subset \R^d_+$ put
$$
\tilde{\nu}(x,y) = \lim_{t \to 0} \frac{\tilde{p}_t(x,y)}{t} = \nu(x-y) - \nu(\hat{x}-y)
$$
and $\tilde{\nu}(x,A) = \int_A \tilde{\nu}(x,y) \, dy$. We call $\tilde{\nu}(x,A)$ the L{\'e}vy measure for the process $\tilde{X}$.

Let $D \subset \R^d$ be an open, bounded, nonempty, symmetric set, $x \in D_+$ and $A \subset \R^d_+ \setminus \overline{D}$ be a Borel set. Then by \cite[Theorem 1]{IW1962} we have
\begin{equation}
\label{tildeIW}
\tilde{P}^x\left(\tilde{X}(\tau_{D_+}) \in A \right) = 
\int_{D_+} \tilde{G}_{D_+}(x,y) \int_A \tilde{\nu}(y,z) \, dz \, dy.
\end{equation}
If additionally $\dist(D_+,\partial \R_+^d)> 0$ then again by (\ref{cemetary}) and \cite[Theorem 1]{IW1962} we have
\begin{equation}
\label{tildeIWcemetary}
\tilde{P}^x\left(\tilde{X}(\tau_{D_+}) \in \partial \right) = 
\int_{D_+} \tilde{G}_{D_+}(x,y)  \left(\int_{\R^d_-} \nu(y-z) \, dz + \int_{\R^d_+} \nu(\hat{y} - z) \, dz \right) \, dy.
\end{equation}

Now our aim is to show that for sufficiently regular $D$ we have $\tilde{P}^x\left(\tilde{X}(\tau_{D_+}) \in \R^d_+ \cap \partial D_+\right) = 0$ for $x \in D_+$.

We need to define an auxiliary family of stopping times:
\begin{eqnarray*}
T_{-1} &=& 0,\\
T_{2n} &=& \tau_{D_{+}} \circ \theta_{T_{2n-1}} + T_{2n-1}, \quad \quad n \ge 0,\\
T_{2n+1} &=& \tau_{D_{-}} \circ \theta_{T_{2n}} + T_{2n}, \quad \quad \,\,\, n \ge 0,
\end{eqnarray*}
Heuristically, up to time $\tau_D$ we count consecutive jumps from $D_+$ to $D_-$ and from $D_-$ to $D_+$. $T_{0}$ equals the first exit time of the process from $D_+$, if at $T_0$ the process jumps to $D_-$ then $T_1$ is the first exit time after $T_0$ from $D_-$. If at $T_{2n}$ the process jumps to $D_-$ then $T_{2n+1}$ is the first exit time after $T_{2n}$ from $D_-$. If at $T_{2n+1}$ the process jumps to $D_+$ then $T_{2n+2}$ is the first exit time of the process from $D_+$. If at some $T_{k}$ the process jumps to $D^c$ then all $T_m = \tau_D$ for $m \ge k$.

\begin{lemma}
\label{familyTn}
Let $D \subset \R^d$ be an open, bounded, nonempty, symmetric set, $x \in D_+$ and $A_+ \subset D_+$, $A_- \subset D_-$ be Borel sets. Assume that  $P^y(X(\tau_D) \in \partial D) = 0$ for any $y \in D$.  Then for any $n \ge 0$ we have
\begin{eqnarray}
\label{jumpplus}
P^x(X(T_{2n}) \in A_-) &=& \int_{D_+} p_{2n}(x,y) \int_{A_-} \nu(y-z) \, dz \, dy,\\
\label{jumpminus}
P^x(X(T_{2n+1}) \in A_+) &=& \int_{D_-} p_{2n+1}(x,y) \int_{A_+} \nu(y-z) \, dz \, dy,\\
\label{p2n}
E^x\left(\int_{T_{2n-1}}^{T_{2n}} 1_{A_+}(X_t) \, dt\right) &=& \int_{A_+} p_{2n}(x,w) \, dw,\\ 
\label{p2n1}
E^x\left(\int_{T_{2n}}^{T_{2n+1}} 1_{A_-}(X_t) \, dt\right) &=& \int_{A_-} p_{2n+1}(x,w) \, dw, 
\end{eqnarray}
where
\begin{eqnarray}
\nonumber
&& p_0(x,w) = G_{D_+}(x,w),\\
\label{p2n1formula}
&& p_{2n+1}(x,w) = \int_{D_+} p_{2n}(x,y) \int_{D_-} \nu(y-z) G_{D_-}(z,w) \, dz \, dy, \quad w \in D_-, \, n \ge 0, \quad \quad\\
\label{p2nformula}
&& p_{2n}(x,w) = \int_{D_-} p_{2n-1}(x,y) \int_{D_+} \nu(y-z) G_{D_+}(z,w) \, dz \, dy, \quad  w \in D_+, \, n \ge 1. \quad \quad
\end{eqnarray}
\end{lemma}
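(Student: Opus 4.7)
My plan is to prove the four identities by simultaneous induction on $n\ge 0$, where each round of the induction corresponds to one full ``bounce cycle'' of the process between $D_+$ and $D_-$, and each identity is established by combining the strong Markov property of $X$ with either the Green function representation for $D_\pm$ or the Ikeda--Watanabe formula (\ref{IW}).

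For the base case $n=0$ observe that $T_{-1}=0$ and $T_0=\tau_{D_+}$ under $P^x$ with $x\in D_+$, so (\ref{p2n}) is just the Green function identity $E^x\int_0^{\tau_{D_+}} 1_{A_+}(X_t)\,dt=\int_{A_+}G_{D_+}(x,w)\,dw$. For (\ref{jumpplus}) with $n=0$, since $A_-\subset D_-\subset\{x_1<0\}$ while $\overline{D_+}\subset\{x_1\ge 0\}$, one has $A_-\subset(\overline{D_+})^c$; hence (\ref{IW}) applied to the set $D_+$ yields directly $P^x(X(\tau_{D_+})\in A_-)=\int_{D_+}G_{D_+}(x,y)\int_{A_-}\nu(y-z)\,dz\,dy$, which matches the claim since $p_0=G_{D_+}$.

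The inductive step proceeds in two half-steps. \emph{Half-step A:} given (\ref{jumpplus}) at level $n$, I would derive (\ref{p2n1}) and (\ref{jumpminus}) at the same level by applying the strong Markov property at $T_{2n}$. On the event $\{X(T_{2n})\in D_-\}$ — whose distribution on $D_-$ is described by (\ref{jumpplus}) as having ``density'' $z\mapsto\int_{D_+}p_{2n}(x,y)\nu(y-z)\,dy$ — we have $T_{2n+1}-T_{2n}=\tau_{D_-}\circ\theta_{T_{2n}}$, and the shifted process lives in $D_-$ on $[T_{2n},T_{2n+1})$. Conditional expectation of $\int_{T_{2n}}^{T_{2n+1}}1_{A_-}(X_t)\,dt$ thus equals $\int_{A_-}G_{D_-}(X(T_{2n}),w)\,dw$; integrating this against the known distribution of $X(T_{2n})$ on $D_-$ and applying Fubini yields exactly the formula $\int_{A_-}p_{2n+1}(x,w)\,dw$ with $p_{2n+1}$ given by (\ref{p2n1formula}). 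The same conditioning combined with (\ref{IW}) for $D_-$ applied to $A_+\subset D_+\subset(\overline{D_-})^c$ gives (\ref{jumpminus}). \emph{Half-step B:} given (\ref{jumpminus}) at level $n$, the symmetric argument at the stopping time $T_{2n+1}$, with the roles of $D_+$ and $D_-$ reversed, produces (\ref{p2n}) and (\ref{jumpplus}) at level $n+1$, closing the induction.

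The main technical obstacle is keeping the bookkeeping clean on the event $\{X(T_k)\in D^c\}$, where by construction $T_m=T_k=\tau_D$ for all $m\ge k$ and the corresponding integrals/probabilities automatically vanish or are absorbed into the indicator $\{X(T_{k-1})\in D_\pm\}$. The assumption $P^y(X(\tau_D)\in\partial D)=0$, together with the fact that $\nu$ has a density and $D\cap\{x_1=0\}$ is Lebesgue-null, is used to dispose of the null events $\{X(T_k)\in\partial D\}$ and $\{X(T_k)\in D\cap\{x_1=0\}\}$ so that $\{X(T_{2n})\in D_-\}$ and $\{X(T_{2n+1})\in D_+\}$ really do capture all ``bouncing-back'' scenarios at each stage. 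Once these null sets are dismissed, each application of (\ref{IW}) involves a target set that lies in the open complement of the closure of the current domain of confinement, so no further regularity of $D_\pm$ is required.
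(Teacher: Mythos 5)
Your proposal is correct and takes essentially the same route as the paper: induction powered by the strong Markov property at the $T_k$, the Ikeda--Watanabe formula for the jump target, and the Green function representation for occupation times, with the observation $A_-\subset(\overline{D_+})^c$ (and symmetrically) justifying each application of (\ref{IW}). The only difference is cosmetic — you split each inductive round into two half-steps and spell out the base case, whereas the paper assumes all four identities at level $n$, proves (\ref{jumpplus}), (\ref{p2n}) at $n+1$ from (\ref{jumpminus}) at $n$, and declares the other two analogous — but the underlying argument is identical.
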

\begin{proof}
We prove the lemma by induction. The case $n=0$ is left to the Reader. Assume that (\ref{jumpplus}), (\ref{jumpminus}), (\ref{p2n}), (\ref{p2n1}) hold for some $n \ge 0$. We will show it for $n+1$. By the strong Markov property we obtain
\begin{equation}
\label{jumpplus2}
P^x(X(T_{2n+2}) \in A_-) =
E^x \left(P^{X(T_{2n+1})}\left(X(\tau_{D_+}) \in A_-\right), \, X(T_{2n+1}) \in D_+\right).
\end{equation}
Now the Ikeda-Watanabe formula (\ref{IW}) and the induction hypothesis (\ref{jumpminus})  give that (\ref{jumpplus2}) equals
\begin{eqnarray*} 
&& E^x \left( \int_{D_+} G_{D_+}(X(T_{2n+1}),w) \int_{A_-} \nu(w-v) \, dv \, dw , \, X(T_{2n+1}) \in D_+ \right)\\
&=& \int_{D_-} p_{2n+1}(x,y) \int_{D_+} \nu(y-z)
\int_{D_+} G_{D_+}(z,w) \int_{A_-} \nu(w-v) \, dv \, dw \, dz \, dy\\
&=&
\int_{D_+} p_{2n+2}(x,w) \int_{A_-} \nu(w-v) \, dv \, dw,
\end{eqnarray*}
which gives (\ref{jumpplus}) for $n+1$. Again by the strong Markov property we get
\begin{eqnarray*}
E^x\left(\int_{T_{2n+1}}^{T_{2n+2}} 1_{A_+}(X_t) \, dt\right)
&=& E^x\left(\left(\int_{0}^{\tau_{D_+}} 1_{A_+}(X_t) \, dt\right)\circ \theta_{T_{2n+1}}, \, X(T_{2n+1}) \in D_+\right)\\
&=& E^x\left(E^{X(T_{2n+1})}\left(\int_{0}^{\tau_{D_+}} 1_{A_+}(X_t) \, dt\right), \, X(T_{2n+1}) \in D_+\right)\\
&=& E^x\left(\int_{A_+} G_{D_+}(X(T_{2n+1}),w) \, dw, \, X(T_{2n+1}) \in D_+\right).
\end{eqnarray*}
By the induction hypothesis (\ref{jumpminus}) this is equal to
$$
\int_{D_-} p_{2n+1}(x,y) \int_{D_+} \nu(y-z) \int_{A_+} G_{D_+}(z,w) \, dw \, dz \, dy = \int_{A_+} p_{2n+2}(x,w) \, dw.
$$
This shows (\ref{p2n}) for $n+1$. The proof of (\ref{jumpminus}) and (\ref{p2n1}) for $n+1$ is analogous and it is omitted.
\end{proof}

\begin{lemma}
\label{boundarytilde}
Let $D \subset \R^d$ be an open, bounded, nonempty, symmetric set such that  $P^y(X(\tau_D) \in \partial D) = 0$ for any $y \in D$. Then we have
$$
\tilde{P}^x\left(\tilde{X}(\tau_{D_+}) \in \partial D_+\cap\R^d_+\right) = 0, \quad \quad x \in D_+.
$$
\end{lemma}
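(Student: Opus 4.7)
The plan is to show that the two parts of $\tilde X$'s exit distribution from $D_+$ whose masses the Ikeda--Watanabe formalism supplies explicitly---the jumps into $\R^d_+\setminus\overline{D_+}$ and the killings at the cemetery $\partial$---already sum to $1$. Since $\tilde E^x\tau_{D_+}\le E^x\tau_D<\infty$ by boundedness of $D$, one has $\tilde P^x(\tau_{D_+}<\infty)=1$, so the total exit mass is $1$ and the creeping mass $\tilde P^x(\tilde X(\tau_{D_+})\in\partial D_+\cap\R^d_+)$ is forced to vanish as the remainder.

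For the two explicit pieces, note first that symmetry of $D$ gives $\R^d_+\setminus\overline{D_+}=\R^d_+\setminus\overline D$, so (\ref{tildeIW}) handles the jump contribution, while (\ref{tildeIWcemetary})---or, if $D$ meets $\{x_1=0\}$ and $\dist(D_+,\partial\R^d_+)=0$, the cemetery construction (\ref{cemetary}) applied directly, or an exhaustion $D_+^{(\varepsilon)}=D_+\cap\{x_1>\varepsilon\}$ with $\varepsilon\searrow 0$---gives the killing contribution with rate $\kappa(y)=2\nu(y,\R^d_-)$, after collapsing the two summands via $\nu(\hat y-z)=\nu(y-\hat z)$. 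Using $\tilde G_{D_+}(x,y)=G_D(x,y)-G_D(\hat x,y)$ for $y\in D_+$, a short manipulation shows that the combined integrand simplifies, reducing the problem to the Green-function identity
\[
\int_{D_+}\bigl(G_D(x,y)-G_D(\hat x,y)\bigr)\bigl[\nu(y,D^c)+2\nu(y,D_-)\bigr]\,dy=1.
\]

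The first summand is immediate from the hypothesis $P^y(X(\tau_D)\in\partial D)=0$ and the Ikeda--Watanabe formula (\ref{IW}) applied to $X$: writing $E_1=\{X(\tau_D^-)\in D_+\}$ and $E_2=\{X(\tau_D^-)\in D_-\}$, one obtains $\int_{D_+}G_D(x,y)\nu(y,D^c)\,dy=P^x(E_1)$ and, by $G_D(\hat x,y)=G_D(x,\hat y)$ together with $\{\hat y:y\in D_+\}=D_-$, $\int_{D_+}G_D(\hat x,y)\nu(y,D^c)\,dy=P^x(E_2)$, contributing $P^x(E_1)-P^x(E_2)$. The second summand---the combinatorial core of the argument---rests on counting region crossings. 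Put $N_+=\#\{n\ge 0:X(T_{2n})\in D_-\}$ and $N_-=\#\{n\ge 0:X(T_{2n+1})\in D_+\}$; tracking the excursion pattern of $X$ (or, equivalently, telescoping the pathwise identity $1_{D_+}(X_{\tau_D})-1_{D_+}(x)=N_--N_+-1_{E_1}$) yields $N_+-N_-=1_{E_2}$, hence $E^x[N_+]-E^x[N_-]=P^x(E_2)$. Identifying $E^x[N_+]$ and $E^x[N_-]$ with Green-type integrals through (\ref{jumpplus})--(\ref{jumpminus}) and the same symmetry gives $\int_{D_+}(G_D(x,y)-G_D(\hat x,y))\nu(y,D_-)\,dy=P^x(E_2)$. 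Summing, the left-hand side of the displayed identity equals $(P^x(E_1)-P^x(E_2))+2P^x(E_2)=P^x(E_1)+P^x(E_2)=1$, closing the argument.

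The main obstacle I anticipate is bookkeeping: correctly identifying the jumping and killing rates of $\tilde X$ in terms of $\nu$ and, in particular, extending (\ref{tildeIWcemetary}) to the case where $D$ touches $\{x_1=0\}$ so that $\dist(D_+,\partial\R^d_+)=0$. Once the killing rate is pinned down as $\kappa(y)=2\nu(y,\R^d_-)$, the rest is the cycle-counting identity described above.
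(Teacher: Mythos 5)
Your proposal is correct and follows essentially the same route as the paper: the same decomposition of the exit mass into jump and killing pieces via (\ref{tildeIW}) and (\ref{tildeIWcemetary}), the same reliance on Lemma \ref{familyTn} to identify the Green-type sums, and the same exhaustion $D_\varepsilon=D_+\cap\{y_1>\varepsilon\}$ to handle $\dist(D_+,\partial\R^d_+)=0$. Your pathwise crossing count $N_+-N_-=1_{E_2}$ is a tidier way of stating the telescoping the paper performs through (\ref{p2n1formula}) together with $P^z(X(\tau_{D_-})\in(D_-)^c)=1$, but the underlying identity is the same.
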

\begin{proof} First, we prove the lemma under the assumption  $\dist(D_+,\partial \R^d_+) > 0$.
Note that   $P^y(X(\tau_D) \in \partial D) = 0$ for any $y \in D$ yields   $P^y(X(\tau_{D_+}) \in \partial {D_+}) = 0$ for any $y \in D_+$. Moreover,
 our assumptions imply that the Lebesgue measure of both $\partial{D}$ and $\partial{D_+}$ is zero.
By (\ref{tildeIW}) we have
$$
\text{I} = \tilde{P}^x\left(\tilde{X}(\tau_{D_+}) \in (D^c)_+ \setminus \partial D_+\right) = \int_{D_+} \tilde{G}_{D_+}(x,y) \int_{(D^c)_+} \tilde{\nu}(y,z) \, dz \, dy.
$$
By (\ref{tildeIWcemetary}) we have
$$
\text{II} = \tilde{P}^x\left(\tilde{X}(\tau_{D_+}) \in \{\partial\}\right) =
\int_{D_+} \tilde{G}_{D_+}(x,y) \left(\int_{\R^d_-} \nu(y-z) \, dz + \int_{\R^d_+} \nu(\hat{y}-z) \, dz\right)\, dy.
$$
Hence
\begin{eqnarray*}
\text{I} + \text{II} &=& \tilde{P}^x\left(\tilde{X}(\tau_{D_+}) \in \{\partial\} \cup (D^c)_+ \setminus \partial D_+\right)\\
&=& \int_{D_+} \tilde{G}_{D_+}(x,y) \int_{D^c} \nu(y-z) \, dz \, dy \\
&+& \int_{D_+} \tilde{G}_{D_+}(x,y) \left(\int_{D_-} \nu(y-z) \, dz + \int_{D_+} \nu(\hat{y}-z) \, dz \right) \, dy\\
&=& \text{III} + \text{IV}.
\end{eqnarray*}

Note that $\text{I} + \text{II} = 1 - \tilde{P}^x\left(\tilde{X}(\tau_{D_+}) \in \partial D_+\right)$. So it is enough to show that 
\begin{equation}
\label{III+IV}
\text{III} + \text{IV} = 1.
\end{equation}
Let $X(\tau_D)_* = \lim_{t \nearrow \tau_{D}} X(t)$.
We have
\begin{eqnarray}
\nonumber
\text{III} &=&
\int_{D_+} G_{D}(x,y) \int_{D^c} \nu(y-z) \, dz \, dy
- \int_{D_+} G_{D}(\hat{x},y) \int_{D^c} \nu(y-z) \, dz \, dy\\
\nonumber
&=& \int_{D} G_{D}(x,y) \int_{D^c} \nu(y-z) \, dz \, dy 
- \int_{D_-} G_{D}(x,y) \int_{D^c} \nu(y-z) \, dz \, dy \\
&& \quad \quad \quad \quad \quad - \int_{D_-} G_{D}(\hat{x},\hat{y}) \int_{D^c} \nu(\hat{y}-z) \, dz \, dy\\
\nonumber
&=& 1 - 2 \int_{D_-} G_{D}(x,y) \int_{D^c} \nu(y-z) \, dz \, dy\\
\label{IIIformulaA}
&=& 1 - 2 P^x(X(\tau_D)_* \in D_-).
\end{eqnarray}
We also have
\begin{eqnarray*}
\text{IV} &=&
2 \int_{D_+} (G_{D}(x,y)-G_D(\hat{x},y))\int_{D_-} \nu(y-z) \, dz \, dy\\
&=& 2 \left( \int_{D_+} G_{D}(x,y) \int_{D_-} \nu(y-z) \, dz \, dy 
- \int_{D_-} G_{D}(x,y) \int_{D_+} \nu(y-z) \, dz \, dy \right).
\end{eqnarray*}
Note that by Lemma \ref{familyTn} we get
\begin{eqnarray*}
G_D(x,y) &=& \sum_{n = 0}^{\infty} p_{2n}(x,y), \quad \quad \text{for} \, \, y \in D_+,\\
G_D(x,y) &=& \sum_{n = 0}^{\infty} p_{2n+1}(x,y), \quad \quad \text{for} \, \, y \in D_-.
\end{eqnarray*}
It follows that 
\begin{equation}
\label{IVexpansion}
\text{IV} = 2 \left( \sum_{n = 0}^{\infty} \int_{D_+} p_{2n}(x,y) \int_{D_-} \nu(y-z) \, dz \, dy 
- \sum_{n = 0}^{\infty} \int_{D_-} p_{2n+1}(x,y) \int_{D_+} \nu(y-z) \, dz \, dy \right).
\end{equation}

Note that for any $z \in D_-$ we have
$$
P^z(X(\tau_{D_-}) \in (D_-)^c) = \int_{D_-} G_{D_-}(z,w) \int_{(D_-)^c} \nu(w-q) \, dq \, dw = 1.
$$
Using this and (\ref{p2n1formula}) we get
\begin{eqnarray*}
&& \int_{D_+} p_{2n}(x,y) \int_{D_-} \nu(y-z) \, dz \, dy\\
&=& \int_{D_+} p_{2n}(x,y) \int_{D_-} \nu(y-z) \int_{D_-} G_{D_-}(z,w) \int_{(D_-)^c} \nu(w-q) \, dq \, dw \, dz \, dy\\
&=& \int_{D_-}\left[\int_{D_+} p_{2n}(x,y) \int_{D_-} G_{D_-}(z,w) \nu(y-z) \, dz \, dy\right]   \int_{(D_-)^c} \nu(w-q) \, dq \, dw \\
&=& \int_{D_-} p_{2n+1}(x,w) \int_{(D_-)^c} \nu(w-q) \, dq \, dw\\
&=& \int_{D_-} p_{2n+1}(x,w) \int_{D^c} \nu(w-q) \, dq \, dw 
+ \int_{D_-} p_{2n+1}(x,w) \int_{D_+} \nu(w-q) \, dq \, dw.
\end{eqnarray*}
Substituting this to (\ref{IVexpansion}) we get
\begin{eqnarray*}
\text{IV} &=& 2 \left( \sum_{n = 0}^{\infty} \int_{D_-} p_{2n+1}(x,w) \int_{D^c} \nu(w-q) \, dq \, dw \right)\\
&=& 2  \int_{D_-} G_D(x,w) \int_{D^c} \nu(w-q) \, dq \, dw\\
&=& 2 P^x(X(\tau_D)_* \in D_-).
\end{eqnarray*} 
Combining the last equality with (\ref{IIIformulaA}) we obtain (\ref{III+IV}), which completes the proof in the case  $\dist(D_+,\partial \R^d_+) > 0$ .

To remove the above condition, 
for any $\ve > 0$, we consider  $D_{\ve} = \{(y_1,\ldots,y_d) \in D_+: \, y_1 > \ve\}$ and $H_{\ve} = \{(y_1,\ldots,y_d): \, y_1 > \ve\}$. From (\ref{Xboundary}) we infer  that $P^y(X(\tau_{H_{\ve}}) \in \partial H_{\ve}) = 0$ for any $\ve > 0$ and $y \in H_{\ve}$. This implies that for any $\ve > 0$ and $y \in D_{\ve}$ we have $P^y(X(\tau_{D_{\ve}}) \in \partial D_{\ve}) = 0$.

Fix $x \in D_+$. There exists $\ve_1 > 0$ such that $x \in D_{\ve_{1}}$. For any $\ve \in (0,\ve_1]$ we have
$$
\tilde{P}^x\left(\tilde{X}(\tau_{D_+}) \in \partial D_+ \cap \R^d_+ \right) \le
\tilde{P}^x\left(\tilde{X}(\tau_{D_{\ve}}) 
\in \partial D_{\ve} \cup \left(D_+ \setminus \overline{D_{\ve}}\right) \right).
$$ 
By the first part of the proof the last probability is equal to $\tilde{P}^x\left(\tilde{X}(\tau_{D_{\ve}}) 
\in  D_+ \setminus \overline{D_{\ve}} \right)$. 
By (\ref{tildeIW}) we have
\begin{eqnarray*}
\tilde{P}^x\left(\tilde{X}(\tau_{D_{\ve}}) 
\in  D_+ \setminus \overline{D_{\ve}} \right)
&=& \int_{D_{\ve}} \tilde{G}_{D_{\ve}}(x,y) \int_{D_+ \setminus D_{\ve}} \tilde{\nu}(y,z) \, dz \, dy\\
&\le& \int_{D_{\ve}} G_{D_{\ve}}(x,y) \int_{D_+ \setminus D_{\ve}} \nu(y-z) \, dz \, dy\\
&\le& P^x(X(\tau_{H_{\ve}}) \in \R_+^d \setminus H_{\ve}).
\end{eqnarray*}
Clearly  this tends to $0$ as $\ve\searrow0$. Hence $\tilde{P}^x\left(\tilde{X}(\tau_{D_+}) \in \partial D_+ \cap \R^d_+ \right) = 0$.
\end{proof}

As a conlusion of (\ref{Xboundary}) and Lemma \ref{boundarytilde} we obtain 
\begin{corollary}
\label{boundarytilde3}
Let $D \subset \R^d$ be a symmetric, open, nonempty, bounded Lipschitz 
set. Then we have
$$
\tilde{P}^x\left(\tilde{X}(\tau_{D_+}) \in \partial D_+ \cap \R^d_+ \right) = 0, \quad \quad x \in D_+.
$$
\end{corollary}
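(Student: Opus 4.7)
The plan is essentially to observe that this corollary is an immediate consequence of combining the two cited results, so the task is just to verify that the hypotheses of Lemma \ref{boundarytilde} are satisfied under the assumption of the corollary.

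Lemma \ref{boundarytilde} requires $D$ to be open, bounded, nonempty, symmetric, and moreover that $P^y(X(\tau_D) \in \partial D) = 0$ for every $y \in D$. The first four properties are given directly by the hypothesis of the corollary. For the fifth, I would invoke equation (\ref{Xboundary}): since $D$ is a bounded open Lipschitz set, that displayed formula says exactly $P^y(X(\tau_D) \in \partial D) = 0$ for all $y \in D$.

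With the hypothesis of Lemma \ref{boundarytilde} verified, the conclusion $\tilde{P}^x(\tilde{X}(\tau_{D_+}) \in \partial D_+ \cap \R^d_+) = 0$ for $x \in D_+$ follows by direct application of that lemma. There is no substantive obstacle here; the real work was already done in establishing Lemma \ref{boundarytilde} (where the symmetric decomposition of $D$ into $D_+$ and $D_-$ and the bookkeeping of excursions via the stopping times $T_n$ carried the argument). This corollary simply packages the main statement under the most commonly used regularity assumption, namely that $D$ is Lipschitz, so that the hitting-the-boundary condition need not be verified by hand in applications.
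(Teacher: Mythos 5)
Your proof is correct and matches the paper exactly: the corollary is stated immediately after the sentence ``As a conclusion of (\ref{Xboundary}) and Lemma \ref{boundarytilde} we obtain,'' which is precisely the combination you spell out. Nothing more is needed.
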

It follows that under assumptions of the above corollary for a Borel set $A \subset \R^d_+ \setminus {D}$ and $x \in D_+$ we have 

\begin{equation}
\label{domin}
\tilde{P}^x\left(\tilde{X}(\tau_{D_+}) \in A \right) = 
\int_{D_+} \tilde{G}_{D_+}(x,y) \int_A \tilde{\nu}(y,z) \, dz \, dy
\le {P}^x\left({X}(\tau_{D}) \in A \right). 
\end{equation}

\section{Auxiliary estimates of the L{\'e}vy measure and the Green function}

Throughout this section we will assume that the process $X$ satisfies the assumptions (A). In fact it is enough to assume only  (H0) and (H1).

\begin{lemma}
\label{Levyquotient}
For any $r > 0$ we have
$$
\left|\frac{\nu'(r)}{\nu(r)}\right| \le  \left(3(a_1-1)\right)\frac1 {r\wedge1}.
$$

Moreover, for $0<r_1<r_2<\infty$,

$$
\frac{\nu(r_1)}{\nu(r_2)} \le  \left(\frac {r_2}{r_1}\right)^{3(a_1-1)}e^{3(a_1-1) (r_2-r_1)}
$$
and
$$\nu(r_1)-\nu(r_2)\le \frac32(a_1-1)\left(\frac {\nu(r_1)}{1\wedge r_1}\right)(r_2-r_1)\left(1+\frac {r_2}{r_1}\right).$$
\end{lemma}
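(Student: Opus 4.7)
The plan is to prove all three inequalities by combining the hypothesis that $-\nu'(s)/s$ is nonincreasing with the doubling-type bounds from (H1); the first (pointwise) bound is the essential one, and the two subsequent estimates follow from it by elementary integration. The common tool is the following observation: since $-\nu'(s)/s$ is nonincreasing, for $r\le s\le r'$ we have $-\nu'(s)\ge -\nu'(r')\,s/r'$, whence
\[
\nu(r)-\nu(r')=\int_r^{r'}(-\nu'(s))\,ds\ge \frac{-\nu'(r')}{2r'}\bigl({r'}^2-r^2\bigr).
\]

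For the pointwise bound I would split into two ranges of $u$. For $u\in(0,2]$ I set $r=u/2$, $r'=u$ (the doubling assumption $\nu(u/2)\le a_1\nu(u)$ is available because $u/2\le 1$) and obtain
\[
(a_1-1)\nu(u)\ge \nu(u/2)-\nu(u)\ge \frac{-\nu'(u)}{2u}\cdot\frac{3u^2}{4},
\]
which yields $|\nu'(u)|/\nu(u)\le 8(a_1-1)/(3u)$. For $u\ge 2$ I set $r=u-1\ge 1$, $r'=u$ and use the shift assumption $\nu(u-1)\le a_1\nu(u)$ to obtain similarly
\[
(a_1-1)\nu(u)\ge \nu(u-1)-\nu(u)\ge \frac{-\nu'(u)(2u-1)}{2u},
\]
so $|\nu'(u)|/\nu(u)\le 2u(a_1-1)/(2u-1)\le 4(a_1-1)/3$. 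Since $8/3$ and $4/3$ are both smaller than $3$, a direct case check ($u\le 1$, $u\in(1,2]$, $u\ge 2$) shows that each of these bounds is majorized by $3(a_1-1)/(u\wedge 1)$, which proves the first inequality.

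The second inequality is then immediate by integrating $-\nu'/\nu$:
\[
\log\frac{\nu(r_1)}{\nu(r_2)}=\int_{r_1}^{r_2}\frac{-\nu'(s)}{\nu(s)}\,ds\le 3(a_1-1)\int_{r_1}^{r_2}\frac{ds}{s\wedge 1}\le 3(a_1-1)\bigl(\log(r_2/r_1)+(r_2-r_1)\bigr),
\]
where the last step is a three-case check on whether $r_1,r_2$ exceed $1$; exponentiating gives the claim. For the third inequality I would pivot the monotonicity of $-\nu'(s)/s$ at $r_1$ instead of $r_2$: for $s\ge r_1$, $-\nu'(s)\le -\nu'(r_1)\,s/r_1$, hence
\[
\nu(r_1)-\nu(r_2)\le \frac{|\nu'(r_1)|}{r_1}\int_{r_1}^{r_2}s\,ds=\frac{|\nu'(r_1)|(r_2-r_1)(1+r_2/r_1)}{2},
\]
and substituting the pointwise bound $|\nu'(r_1)|\le 3(a_1-1)\nu(r_1)/(r_1\wedge 1)$ produces exactly the stated estimate.

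The argument is essentially elementary calculus. The only small subtlety is choosing the correct pivot in the monotonicity inequality (doubling from $u/2$ versus additive shift from $u-1$) so that the lower bound it produces matches the appropriate part of (H1) in each range of $u$, and then verifying that the resulting numerical constants collapse to the clean form $3(a_1-1)/(r\wedge 1)$ stated in the lemma.
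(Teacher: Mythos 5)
Your proposal is correct and takes essentially the same approach as the paper: you use the identical monotonicity inequality $\nu(u)-\nu(v)\ge\frac{-\nu'(v)}{2v}(v^2-u^2)$ as the key tool, pivot with $u/2$ doubling for small $r$ and the additive shift for large $r$, integrate the logarithmic derivative for the second bound, and reverse the pivot to $r_1$ for the third, arriving at the same constants $8/3$, $4/3$, and $3$. The only cosmetic difference is that the paper indexes by $v=r+1$ with $r\ge 1$ where you index by $u\ge 2$, which is the same estimate after reparametrization.
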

\begin{proof} Let $0<u<v<\infty$. Then by absolute continuity of $\nu(\rho)$ and monotonicity of $-\nu^\prime(\rho)/\rho$ we have 
$$\nu(u)-\nu(v)= \int_u^v \rho\frac{-\nu^\prime (\rho)}\rho d\rho\ge \frac{-\nu^\prime (v)}v\int_u^v \rho d\rho = \frac{-\nu^\prime (v)}{2v}(v^2-u^2).  $$
Hence 
$$|\nu'(v)|\le 2v\frac{\nu(u) - \nu(v)} {v^2-u^2}.$$
Next, we take $v=r, u=r/2$ if $r\le 2$,  to arrive at 
$$\left|{\nu'(r)}\right|\le \frac 8{3r} (\nu(r/2) - \nu(r))\le \frac {8(a_1-1)}{3r} \nu(r).$$
Similarly for   $v=r+1, u=r$,  $r\ge 1 $,

$$\left|\frac{\nu'(r+1)}{\nu(r+1)}\right|\le \frac{4(a_1-1)}3.$$
Combining both estimates we complete the proof of the first assertion. The second one is an easy consequence of the first.

Again let $0<u<v<\infty$, then using monotonicity of $-\nu^\prime(\rho)/\rho$ and  the first claim of the lemma  we obtain 
\begin{eqnarray*}\nu(u)-\nu(v)&=& \int_u^v \rho\frac{-\nu^\prime (\rho)}\rho d\rho\le \frac{-\nu^\prime (u)}u\int_u^v \rho d\rho\\& =& \frac{-\nu^\prime (u)}{2u}(v^2-u^2)\le  \left(3(a_1-1)\right)\frac{\nu (u)} {u\wedge1} \frac{(v^2-u^2)}{2u}. \end{eqnarray*}
\end{proof}

By Lemma \ref{Levyquotient} and (\ref{Levymeasureestimates}) we obtain
\begin{corollary}
\label{ABLevyquotient}
%Let $X$ satisfies assumptions (A). Then we have
 For any $v,z \in \R^d_+$, 

\begin{eqnarray*}
\tilde{\nu}(v,z) &\le&\frac32(a_1-1) |z-\hat{z}| \frac{\nu(v-z)}{1\wedge|v-z|} \left(1+\frac{|v-\hat{z}|}{|v-z|}\right)\\ &\le& c |z-\hat{z}| \frac{|v-\hat{z}|}{|v-z|^{d+1}(1\wedge|v-z|)\V^2(|v-z|)},
\end{eqnarray*}
where $c= (a_1-1)c_1(d)$.

\end{corollary}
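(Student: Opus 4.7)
The plan is to obtain the first inequality by a direct application of the third estimate in Lemma \ref{Levyquotient} to the pair of radii $r_1 = |v-z|$ and $r_2 = |v-\hat z|$, and then to derive the second inequality using the upper bound \eqref{Levymeasureestimates} for $\nu$.

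First I would observe that because $\nu$ is radial, $\tilde\nu(v,z) = \nu(v-z) - \nu(\hat v - z) = \nu(|v-z|) - \nu(|v-\hat z|)$, where in the last equality I used $|\hat v - z| = |v-\hat z|$. Next, since $v,z \in \R^d_+$, i.e. $v_1, z_1 > 0$, a direct computation gives $|v-\hat z|^2 - |v-z|^2 = 4 v_1 z_1 \ge 0$, so $|v-\hat z| \ge |v-z|$ and the difference $\tilde\nu(v,z)$ is nonnegative; in particular the third estimate of Lemma \ref{Levyquotient} applies with $r_1 = |v-z|$ and $r_2 = |v-\hat z|$. Combined with the triangle inequality $|v-\hat z| - |v-z| \le |z-\hat z|$ (i.e.\ $r_2 - r_1 \le |z-\hat z|$), this immediately yields the first claimed bound
\[
\tilde\nu(v,z) \le \tfrac{3}{2}(a_1-1)\,|z-\hat z|\,\frac{\nu(|v-z|)}{1\wedge|v-z|}\left(1+\frac{|v-\hat z|}{|v-z|}\right).
\]

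For the second inequality, since $|v-\hat z| \ge |v-z|$, I would bound
\[
1 + \frac{|v-\hat z|}{|v-z|} \le \frac{2|v-\hat z|}{|v-z|},
\]
and then insert the pointwise estimate \eqref{Levymeasureestimates}, namely $\nu(|v-z|) \le c(d)/(\V^2(|v-z|)\,|v-z|^d)$. Multiplying everything out and absorbing numerical constants into $c_1(d)$ gives the stated bound with constant $c = (a_1-1)c_1(d)$. No step looks delicate; the only mildly non-routine ingredients are the inequality $|v-\hat z|\ge|v-z|$ for points in the upper half-space and the triangle-inequality control of $r_2-r_1$, both of which are elementary.
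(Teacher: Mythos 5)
Your proof is correct and is exactly the argument the paper intends (the paper states the corollary follows from Lemma \ref{Levyquotient} and \eqref{Levymeasureestimates} without writing it out). The key observations you make explicit — that $|\hat v - z| = |v - \hat z| \ge |v-z|$ for $v,z\in\R^d_+$, and $r_2-r_1 \le |z-\hat z|$ by the reverse triangle inequality — are the right ones.
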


By Lemma \ref{Levyquotient} we easily obtain:
\begin{corollary}
\label{boundedness}
%Let $X$ satisfies assumptions (A). 
If a measurable function $f:\R^d \to [0,\infty)$ satisfies $\int_{\R^d} f(y) (\nu(y) \wedge 1) \, dy < \infty$ then for any $x \in \R^d$ we have $\int_{\R^d} f(y) (\nu(x-y) \wedge 1) \, dy < \infty$.
\end{corollary}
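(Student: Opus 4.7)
The plan is to fix $x \in \R^d$, split $\R^d$ into a bounded region $A = B(0, |x|+2)$ and its complement $B$, and dominate $\nu(x-y)\wedge 1$ on each piece by an expression that is integrable against $f(y)\,dy$.

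On $A$, since $\nu$ is radial and nonincreasing, $\nu(y)\wedge 1 \ge \nu(|x|+2)\wedge 1 =: c_1 > 0$, so
$$
\int_A f(y)\,dy \le c_1^{-1} \int_A f(y)(\nu(y)\wedge 1)\,dy < \infty.
$$
The trivial bound $\nu(x-y)\wedge 1 \le 1$ then gives $\int_A f(y)(\nu(x-y)\wedge 1)\,dy < \infty$.

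On $B$, we have $|y| > |x|+2$, so $|x-y| \ge |y|-|x| \ge 2$, and both $|y|,|x-y| \ge 1$. I would establish the pointwise bound $\nu(x-y) \le C_x \nu(y)$ throughout $B$, for some constant $C_x \ge 1$ depending only on $x$ and $a_1$. If $|x-y| \ge |y|$, this follows directly from monotonicity of $\nu$. If $|x-y| < |y|$, Lemma~\ref{Levyquotient} applied with $r_1 = |x-y|$ and $r_2 = |y|$ yields
$$
\frac{\nu(r_1)}{\nu(r_2)} \le \left(\frac{r_2}{r_1}\right)^{3(a_1-1)} e^{3(a_1-1)(r_2-r_1)} \le (1+|x|)^{3(a_1-1)} e^{3(a_1-1)|x|} =: C_x,
$$
where the last inequality uses the reverse triangle inequality $r_2-r_1 \le |x|$ together with $r_2/r_1 \le 1 + |x|/r_1 \le 1+|x|$ (since $r_1 \ge 1$). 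A short case check, depending on whether $\nu(y) \ge 1$ or not, together with $C_x \ge 1$, upgrades $\nu(x-y) \le C_x \nu(y)$ to $\nu(x-y)\wedge 1 \le C_x(\nu(y)\wedge 1)$. Integrating over $B$ then gives $\int_B f(y)(\nu(x-y)\wedge 1)\,dy \le C_x \int_B f(y)(\nu(y)\wedge 1)\,dy < \infty$, and combining both pieces completes the proof.

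The only subtlety is ensuring that the polynomial factor $(r_2/r_1)^{3(a_1-1)}$ in Lemma~\ref{Levyquotient} stays bounded, which is precisely why $A$ is chosen so that its complement keeps $r_1 = |x-y|\wedge|y|$ away from $0$; the upgrade from $\nu$ to $\nu\wedge 1$ is elementary.
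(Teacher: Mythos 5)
Your argument is correct and is precisely the kind of routine deduction from Lemma~\ref{Levyquotient} that the paper leaves implicit: on the bounded ball $B(0,|x|+2)$ you trade $\nu(x-y)\wedge 1\le 1$ against the strict positivity of $\nu(|x|+2)\wedge 1$, and on the complement you invoke the quotient bound of Lemma~\ref{Levyquotient} with $r_2-r_1\le|x|$ and $r_2/r_1\le 1+|x|$, together with monotonicity when $|x-y|\ge|y|$. The upgrade from $\nu$ to $\nu\wedge 1$ via $C_x\ge 1$ is also in order, so the proof matches the intended one-line reduction to Lemma~\ref{Levyquotient}.
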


\begin{lemma}
\label{Lipschitzg}
%Let $X$ satisfies assumptions (A). 
Let $w \in \R^d$, $r \in (0,2]$, put $B = B(w,r)$. Assume that a measurable function $f:\R^d \to [0,\infty)$ satisfies $\int_{\R^d} f(y) (\nu(y) \wedge 1) \, dy < \infty$. For $y \in B$ put $g(y) = \int_{B^c} f(z) \nu(y - z) \, dz$. Then the function $g$ is bounded on $B(w,r/2)$ and satisfies
$$
|g(y) - g(w)| \le c \frac{(g(w) \wedge g(y)) |y - w|}{r}, \quad \quad y \in B(w,r/2).
$$
We also have $g(w) \le c g(y)$ for $y \in B(w,r/2)$.
\end{lemma}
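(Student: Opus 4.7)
The plan is to reduce the whole lemma to a pointwise Lipschitz-type estimate for $\nu(y-z)-\nu(w-z)$ in $y$, uniformly in $z\in B^c$. For $y\in B(w,r/2)$ and $z\in B^c=B^c(w,r)$ set $s_1=|y-z|$, $s_2=|w-z|$ and let $s_{\min}=\min(s_1,s_2)$, $s_{\max}=\max(s_1,s_2)$. Then $s_{\min}\ge r/2$, $s_{\max}-s_{\min}\le|y-w|\le r/2$ and therefore $s_{\max}/s_{\min}\le 2$. Also, since $r\le 2$, one has $1/(1\wedge s_{\min})\le 2/r$.

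Next I would apply the third bound of Lemma \ref{Levyquotient} to $(s_{\min},s_{\max})$ to obtain
$$|\nu(s_1)-\nu(s_2)|=\nu(s_{\min})-\nu(s_{\max})\le \tfrac{9(a_1-1)}{r}\nu(s_{\min})\,|y-w|,$$
and the second bound of Lemma \ref{Levyquotient}, together with $s_{\max}/s_{\min}\le 2$ and $s_{\max}-s_{\min}\le 1$, to absorb $\nu(s_{\min})$: $\nu(s_{\min})\le c_0\nu(s_{\max})\le c_0\min(\nu(s_1),\nu(s_2))$ for some constant $c_0=c_0(a_1)$. Combining,
$$|\nu(y-z)-\nu(w-z)|\le \frac{c\,|y-w|}{r}\,\bigl(\nu(y-z)\wedge\nu(w-z)\bigr),\qquad y\in B(w,r/2),\ z\in B^c.$$
Multiplying by $f(z)\ge0$ and integrating over $B^c$ gives simultaneously $|g(y)-g(w)|\le (c|y-w|/r)\,g(w)$ and $|g(y)-g(w)|\le (c|y-w|/r)\,g(y)$, which is the claimed inequality; and from the first of these, $g(y)\le g(w)(1+c/2)$, $g(w)\le g(y)(1+c/2)$ for $y\in B(w,r/2)$, giving the comparability statement.

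It remains to justify that $g(w)<\infty$ (so that the estimates are not vacuous and boundedness on $B(w,r/2)$ follows by the comparability). For $z\in B^c$ we have $|w-z|\ge r>0$, hence $\nu(w-z)\le\nu(r)<\infty$. Setting $M=\max(\nu(r),1)$, a case split on whether $\nu(w-z)\lessgtr 1$ gives $\nu(w-z)\le M\,(\nu(w-z)\wedge 1)$, so
$$g(w)\le M\int_{\R^d}f(z)\bigl(\nu(w-z)\wedge 1\bigr)\,dz<\infty$$
by Corollary \ref{boundedness}. This step is essentially routine; the only mild subtlety is keeping track of the constants through Lemma \ref{Levyquotient} when $s_{\min}$ may straddle the value $1$, which is handled by the uniform bound $1/(1\wedge s_{\min})\le 2/r$ available because of the standing assumption $r\le 2$. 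I expect no real obstacle beyond this careful bookkeeping.
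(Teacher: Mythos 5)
Your proof is correct and follows essentially the same strategy as the paper: reduce the lemma to a pointwise Lipschitz estimate for $z\mapsto\nu(y-z)-\nu(w-z)$ uniformly over $z\in B^c$, derive it from the second and third bounds of Lemma~\ref{Levyquotient} using the geometric facts $s_{\min}\ge r/2$, $s_{\max}-s_{\min}\le|y-w|$, $s_{\max}/s_{\min}\le 2$, and then integrate against $f$. The only cosmetic difference is that you work directly with the pair $(s_{\min},s_{\max})=(\min,\max)(|y-z|,|w-z|)$ while the paper routes through $\nu(|z-w|-|y-w|)$; the finiteness of $g(w)$ via Corollary~\ref{boundedness} is handled the same way.
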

\begin{proof}
Let $y \in B(w,r/2)$.
By Corollary \ref{boundedness}, $g(y) < \infty$. If   $z \in B^c$, by Lemma \ref{Levyquotient} we have,

\begin{eqnarray*}|\nu(y-z) -\nu(w-z)|&\le& \nu(|z-w|-|y-w|) -\nu(|z-w|)\\
&\le& 12(a_1-1)\frac {\nu\left(|z-w|-|y-w|\right)} {r} |y-w|\\
&\le& \frac {\nu(|z-w|)} {r} |y-w| 12 (a_1-1) 2^{3(a_1-1)}e^{3(a_1-1)r/2}.
\end{eqnarray*}
Similarly, 
\begin{eqnarray*}|\nu(y-z) -\nu(w-z)|
&\le& \frac {\nu(|z-y|)} r |y-w| 9(a_1-1) 3^{3(a_1-1)}e^{3(a_1-1)r}.
\end{eqnarray*}
Combining both estimates we obtain
\begin{eqnarray*}|\nu(y-z) -\nu(w-z)|
&\le& \frac c {r}\left(\nu(|z-y|)\wedge \nu(|w-z|)\right) |y-w|,
\end{eqnarray*}
where $c=12(a_1-1) 3^{3(a_1-1)}e^{3(a_1-1)r}$.

It follows that 
\begin{eqnarray*}
|g(y) - g(w)| &\le& \int_{B^c} f(z) |\nu(y-z) -\nu(w-z)| \, dz\\
&\le& c \frac{ |y - w|}{r} \int_{B^c} f(z) (\nu(w-z) \wedge \nu(y-z)) \, dz\\
&\le& c
\frac{ (g(w) \wedge g(y)) |y - w|}{r}.
\end{eqnarray*}
\end{proof}

In what follows by $\{e_1, \dots, e_d\} $ we denote the standard orthonormal basis in $\R^d$.

\begin{proposition}
\label{Greenhatest}
 Let $r > 0$, $x_1 \in (0,r)$, put $B = B(0,r)$, $x = x_1 e_1$. Let $y \in B_+$ such that $|y| \ge 4 |x|$. Then we have
$$
0 < \tilde{G}_{B_+}(x,y) \le c |x - \hat{x}| | y| U^{(d+2)}\left(\frac{|y|}{2}\right) \le c |x - \hat{x}| \frac{\V^2(|y|)}{|y|^{d+1}},
$$ 
where $c = c(d)$.
\end{proposition}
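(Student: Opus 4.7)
The plan is to combine the trivial bound
$$\tilde G_{B_+}(x,y)\;\le\;\int_0^\infty \bigl(p_t(x-y)-p_t(\hat x-y)\bigr)\,dt,$$
recorded in Section~4, with the pointwise identity from Theorem \ref{dertransition}, in order to rewrite this integral as a one-dimensional integral against the potential kernel $U^{(d+2)}$ of the auxiliary process $X^{(d+2)}$.

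Since $x=x_1e_1$ we have $\hat x=-x$ and $|x-\hat x|=2|x|$. For $y=(y_1,\dots,y_d)\in B_+$ the algebraic identity
$$|\hat x-y|^2-|x-y|^2 \;=\; 4 x_1 y_1$$
holds, and the hypothesis $|y|\ge 4|x|$ forces both $|x-y|$ and $|\hat x-y|$ to lie in $[3|y|/4,\,5|y|/4]$, so the radial variable stays comparable to $|y|$ on the segment $[|x-y|,\,|\hat x-y|]$. By Theorem \ref{dertransition}, for each fixed $t>0$,
$$p_t(x-y)-p_t(\hat x-y) \;=\; -\int_{|x-y|}^{|\hat x-y|} p_t'(r)\,dr \;=\; 2\pi\int_{|x-y|}^{|\hat x-y|} r\, p_t^{(d+2)}(r)\,dr,$$
and Tonelli (the integrand being nonnegative) produces
$$\int_0^\infty \bigl(p_t(x-y)-p_t(\hat x-y)\bigr)\,dt \;=\; 2\pi\int_{|x-y|}^{|\hat x-y|} r\,U^{(d+2)}(r)\,dr.$$
Since $p_t^{(d+2)}$ is radially nonincreasing (Theorem \ref{dertransition}), so is $U^{(d+2)}$; pulling $U^{(d+2)}(|x-y|)$ out of the last integral and computing $\int_{|x-y|}^{|\hat x-y|} r\,dr = 2x_1 y_1 \le |x-\hat x||y|$, and then using $U^{(d+2)}(|x-y|)\le U^{(d+2)}(|y|/2)$ (valid because $|x-y|\ge 3|y|/4>|y|/2$), yields the first claimed inequality, with an absolute constant.

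For the second inequality I would invoke the standard potential-kernel bound $U^{(d+2)}(r)\le c(d)\V^2(r)/r^{d+2}$. This applies because, by Theorem \ref{dertransition}, $X^{(d+2)}$ is an isotropic unimodal pure-jump L\'evy process with infinite L\'evy measure, sharing the characteristic exponent of $X$ and living in dimension $d+2\ge 3$, where it is automatically transient; see \cite{G2013}. Applied at $r=|y|/2$ and combined with the doubling property (\ref{psi_star}) of $\V$ (which absorbs the constants coming from $\V(|y|/2)$ and $(|y|/2)^{d+2}$), this converts the factor $|y|U^{(d+2)}(|y|/2)$ from the first inequality into $\V^2(|y|)/|y|^{d+1}$ up to a constant. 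The main point requiring external justification is the potential-kernel bound for $X^{(d+2)}$; once it is granted, the whole argument amounts to a one-line application of Theorem \ref{dertransition}.
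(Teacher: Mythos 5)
Your proof is correct and takes essentially the same route as the paper: the paper applies the Lagrange mean value theorem to $p_t(\cdot - y)$ on the segment $[\hat x,x]$ and then uses Theorem \ref{dertransition} to identify the radial derivative with $-2\pi r\,p_t^{(d+2)}(r)$, while you use the equivalent fundamental-theorem-of-calculus form $p_t(x-y)-p_t(\hat x-y)=2\pi\int_{|x-y|}^{|\hat x-y|} r\,p_t^{(d+2)}(r)\,dr$ together with the exact computation $\int_{|x-y|}^{|\hat x-y|} r\,dr = 2x_1y_1$. After integrating in $t$ by Tonelli, both arguments come down to the same ingredients — radial monotonicity of $U^{(d+2)}$, the comparability $|x-y|\asymp|\hat x-y|\asymp|y|$ forced by $|y|\ge 4|x|$, and the potential-kernel bound $U^{(d+2)}(r)\le c(d)\V^2(r)/r^{d+2}$ from \cite[Theorem~3]{G2013} — so the two proofs are the same in substance.
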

\begin{proof}

By the Lagrange theorem there is a point $\xi$  between $\hat{x}$ and $x$ ($\xi$ depends on $t$, $x$, $y$ and the process $X$) such that

$$p_t(x-y) - p_t(\hat{x} - y)=|x - \hat{x}| \frac{\partial}{\partial \xi_1}p_t(\xi-y).$$
 By Theorem \ref{dertransition} this equals
\begin{equation}
\label{intder}
-2\pi|x - \hat{x}|  (\xi_1 - y_1) p_t^{(d+2)}(\xi-y).
\end{equation}
Note also that $\frac{1}{2} |y|\le |\xi - y|  \le \frac{3}{2} |y|$. 
%$$
%|y - x| \le |y| + |x| \le \frac{5}{4} |y|,
%$$
%$$
%|\xi - y| \ge |y| - |\xi| \ge |y| - |x| \ge \frac{1}{2} |y| ,
%$$
%$$
%|\xi - y| \le |\xi| + |y| \le |x| + |y| \le \frac{3}{2} |y|.
%$$
 By Theorem \ref{dertransition}, $p_t^{(d+2)}$ is radial and radially nonincreasing. Hence by (\ref{intder}) we have
 $$|p_t(x-y) - p_t(\hat{x} - y)|\le  3 \pi |x - \hat{x}| | y|  p_t^{(d+2)}\left(\frac{|y|}{2}\right).$$

Next,\begin{eqnarray*}
\tilde{G}_{B+}(x,y) &\le&
\int_0^{\infty} |p_t(x-y) - p_t(\hat{x} - y)| \, dt \\
&\le& 3 \pi |x - \hat{x}| |y| \int_0^{\infty} p_t^{(d+2)}\left(\frac{|y|}{2}\right) \, dt \\&=&  3\pi |x - \hat{x}| |y| U^{(d+2)}\left(\frac{|y|}{2}\right).
\end{eqnarray*}
By \cite[Theorem 3]{G2013} the last expression  is bounded from above by 
$$
\frac{c |x - \hat{x}| \V^2\left(\frac{|x - y|}{2}\right)}{|x - y|^{d+1}} \le 
\frac{c |x - \hat{x}| \V^2(|x-y|)}{|x - y|^{d+1}},
$$
where $c = c(d)$.
\end{proof}

\begin{lemma}
\label{Greenintegral}
For any $r >0$, $h \in (0,r/16)$, $x = h e_1$, $B = B(0,r)$ we have
$$
\int_{B_+} \tilde{G}_{B_+}(x,y) |y| \, dy \le c |x| \int_{B(0,r/4)} G_B(x,y) \, dy.
$$
\end{lemma}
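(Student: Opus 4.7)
The approach is to split the integration domain at $|y| = 4|x|$ and treat the two pieces separately. The hypothesis $|x| = h < r/16$ yields $4|x| < r/4$, so the near piece $B_+ \cap B(0,4|x|)$ sits inside $B(0,r/4)$, while for the far piece $B_+ \cap B^c(0,4|x|)$ the pointwise bound of Proposition~\ref{Greenhatest} applies. For the near piece, the trivial estimate $\tilde{G}_{B_+}(x,y) \le G_B(x,y)$ (Section 4) together with $|y| \le 4|x|$ directly gives
\begin{equation*}
\int_{B_+ \cap B(0,4|x|)} \tilde{G}_{B_+}(x,y)\,|y|\, dy \;\le\; 4|x| \int_{B(0,r/4)} G_B(x,y)\, dy,
\end{equation*}
which already matches the desired right-hand side.

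For the far piece, Proposition~\ref{Greenhatest} together with $|x - \hat{x}| = 2|x|$ gives the pointwise bound $\tilde{G}_{B_+}(x,y) \le c|x|\cdot |y|\, U^{(d+2)}(|y|/2)$, hence
\begin{equation*}
\int_{B_+ \cap B^c(0,4|x|)} \tilde{G}_{B_+}(x,y)\,|y|\, dy \;\le\; c|x| \int_B |y|^2\, U^{(d+2)}(|y|/2)\, dy.
\end{equation*}
Substituting $u = y/2$ and comparing spherical integrals in $\R^d$ and $\R^{d+2}$, the right-hand integral equals (up to a dimensional constant) the potential integral $I := \int_{\{|u|<r/2\}\subset\R^{d+2}} U^{(d+2)}(u)\, du$, i.e., the expected total time that $X^{(d+2)}$ spends in the ball of radius $r/2$ in $\R^{d+2}$ when started from the origin.

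The key step is the bound $I \le c\V^2(r)$. I would obtain it by splitting at the exit time $\tau := \tau^{(d+2)}_{\{|u|<r\}}$ via the strong Markov property of $X^{(d+2)}$. The pre-$\tau$ contribution is $\le E^0 \tau \le C_1 \V^2(r)$ by Lemma~\ref{Exit_ball}, which applies to $X^{(d+2)}$ as well since its characteristic exponent $\psi(|\cdot|)$ has the same maximal function $\psi^*$ and hence the same $\V$. For the post-$\tau$ contribution: the process lands at some point $w$ with $|w| \ge r$, so for $|u| < r/2$ we have $|u-w| \ge r/2$ and thus $U^{(d+2)}(u-w) \le U^{(d+2)}(r/2)$ by radial monotonicity; combining the pointwise bound $U^{(d+2)}(r/2) \le c\V^2(r)/r^{d+2}$ from \cite[Theorem~3]{G2013} with the volume of the ball yields $\le c\V^2(r)$ as well.

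Finally, the factor $\V^2(r)$ is absorbed into the target: since $|x| < r/16$ we have $x \in B(0,r/8)$, so Lemma~\ref{Exit_ball} and the inclusion $B(0,r/4) \subset B$ give $\int_{B(0,r/4)} G_B(x,y)\, dy \ge E^x \tau_{B(0,r/4)} \ge C_2 \V^2(r/4) \ge c\V^2(r)$, using the scaling \eqref{psi_star}. Summing the near and far bounds completes the proof. The main obstacle is the estimate $I \le c\V^2(r)$: it is not a direct Dynkin formula since $U^{(d+2)}$ is the full potential rather than a Green function, but the strong-Markov decomposition above handles the excursions of $X^{(d+2)}$ outside $\{|u|<r\}$ using only the pointwise potential-kernel bound together with Lemma~\ref{Exit_ball}.
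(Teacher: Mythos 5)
Your proof is correct, and the overall decomposition is the same as the paper's: split at $|y|=4|x|$, use $\tilde G_{B_+}\le G_B$ on the near piece, use Proposition~\ref{Greenhatest} on the far piece to reduce to the radial potential integral $\int_0^r\rho^{d+1}U^{(d+2)}(\rho)\,d\rho$, and close with Lemma~\ref{Exit_ball}. The one place you diverge is the key intermediate bound $\int_0^r\rho^{d+1}U^{(d+2)}(\rho)\,d\rho\le c\,\V^2(r)$: the paper simply cites \cite[Proposition~2]{G2013}, whereas you prove it directly by interpreting it as an expected occupation time of a ball for $X^{(d+2)}$ and splitting at the exit time $\tau$ of $B^*(0,r)\subset\R^{d+2}$, estimating the pre-$\tau$ piece by $E^0\tau\le C_1\V^2(r)$ (Lemma~\ref{Exit_ball} in dimension $d+2$) and the post-$\tau$ piece via the strong Markov property, radial monotonicity of $U^{(d+2)}$, and the pointwise bound $U^{(d+2)}(\rho)\le c\V^2(\rho)/\rho^{d+2}$ from \cite[Theorem~3]{G2013}. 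Your version is self-contained and makes transparent why the bound has the form it does; the paper's is shorter by leaning on the reference. Both are valid.
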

\begin{proof} It is obvious that
$$
\int_{ B(0,4h)_+} \tilde{G}_{B_+}(x,y) |y| \, dy \le 4 |x| \int_{B(0,r/4)} G_B(x,y) \, dy.
$$
Hence it is enough to estimate the integral over $(B \setminus B(0,4h))_+$.
 For any $y \in (B \setminus B(0,4h))_+$ we have $|y| \ge 2 |x|$. By Proposition \ref{Greenhatest} we get
\begin{eqnarray*}
\int_{(B \setminus B(0,4h))_+} \tilde{G}_{B_+}(x,y) |y| \, dy &\le& c |x - \hat{x}| \int_{(B \setminus B(0,4h))_+}   |y|^2 U^{(d+2)}(|y|/2)dy\\
&\le& c |x - \hat{x}| \int_{0}^r   \rho^{d+1} U^{(d+2)}( \rho)d\rho\\
&\le& c |x - \hat{x}|\V^2(r), 
\end{eqnarray*}
where the last inequality follows from 
\cite[Proposition  2]{G2013}.
 Finally, by Lemma \ref{Exit_ball} we obtain the conclusion. 
\end{proof}

\begin{lemma}
\label{ring}
For any   $r>0$  and any $y \in B(0,r) \setminus \overline{B(0,3r/4)}$ we have
$$
P^y(X_{\tau_R} \in B(0,r) \setminus R) \le c \frac{\V(\delta(y))}{\V(r)},
$$
where $R = B(0,r) \setminus \overline{B(0,r/2)}$, $\delta(y) = \delta_{B(0,r)}(y)$ and $c=c(d)$.
\end{lemma}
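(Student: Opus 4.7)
The plan is to apply Lemma \ref{UBHP} not to $R$ itself but to the smaller auxiliary domain $D = R \cap B(y, s)$, where $s = |y| - r/2$ is exactly the distance from $y$ to $\overline{B(0, r/2)}$. Noting that $B(0, r) \setminus R = \overline{B(0, r/2)}$, the claim reduces to bounding $P^y(X_{\tau_R} \in \overline{B(0, r/2)})$. The intuition is that any path exiting $R$ into $\overline{B(0, r/2)}$ must in particular leave $B(y, s)$, so Lemma \ref{UBHP} with $z = y$ supplies the right kind of control.

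First I would verify the hypotheses of Lemma \ref{UBHP}. The assumption $3r/4 < |y| < r$ forces $r/4 < s < r/2$, and trivially $D \subset B(y, s)$ with $y \in D \cap B(y, s/2)$. For any $v \in \overline{B(0, r/2)}$, the triangle inequality gives $|y - v| \ge |y| - |v| \ge s$, so $\overline{B(0, r/2)} \subset B^c(y, s)$.

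Next I would establish the set inclusion $\{X_{\tau_R} \in \overline{B(0, r/2)}\} \subset \{X_{\tau_D} \in B^c(y, s)\}$ by a two-case analysis. If $\tau_D = \tau_R$, then $X_{\tau_D} = X_{\tau_R} \in \overline{B(0, r/2)} \subset B^c(y, s)$. If $\tau_D < \tau_R$, then at time $\tau_D$ the process has left $D$ but is still in $R$, so $X_{\tau_D} \in R \setminus D \subset B^c(y, s)$. Lemma \ref{UBHP} therefore yields
$$
P^y\left(X_{\tau_R} \in \overline{B(0, r/2)}\right) \le P^y\left(X_{\tau_D} \in B^c(y, s)\right) \le c\,\frac{E^y(\tau_D)}{\V^2(s)}.
$$

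Finally, since $D \subset B(0, r)$ we have $\tau_D \le \tau_{B(0, r)}$, and Lemma \ref{Exit_ball} gives $E^y(\tau_D) \le C_1 \V(r)\V(\delta(y))$. The scaling estimate (\ref{psi_star}) applied with ratio $4$ produces $\V(r) \le \sqrt{34}\,\V(r/4) \le \sqrt{34}\,\V(s)$, so $\V^2(s) \ge \V^2(r)/34$. Combining everything,
$$
P^y\left(X_{\tau_R} \in B(0, r) \setminus R\right) \le c\,\frac{\V(r)\V(\delta(y))}{\V^2(r)} = c\,\frac{\V(\delta(y))}{\V(r)}.
$$
I do not anticipate a serious obstacle: the whole argument reduces to selecting the auxiliary domain $D$ correctly, after which it is a direct application of Lemma \ref{UBHP}, Lemma \ref{Exit_ball}, and the scaling of $\V$.
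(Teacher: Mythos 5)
Your proof is correct and follows essentially the same strategy as the paper's: restrict to a smaller auxiliary domain $D \subset R$, apply Lemma \ref{UBHP} to bound the exit probability, then use Lemma \ref{Exit_ball} together with the scaling property (\ref{psi_star}) of $\V$. The only difference is cosmetic — you center the auxiliary ball at $y$ with radius $s = |y| - r/2$, while the paper takes $D = R \cap B(re_1, r/2)$ (after rotating so $y = qe_1$) and applies Lemma \ref{UBHP} with that boundary point as center; both choices satisfy the hypotheses and lead to the same bound.
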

\begin{proof}
We may assume that $y = q e_1$ for some $q \in (3r/4,r)$. Put $z = r e_1$ and $D = B(z,r/2) \cap R$. Clearly, $y \in B(z,r/4)$ and 
\begin{equation*}
P^y(X(\tau_R) \in B(0,r) \setminus R) \le
P^y(X(\tau_D) \in B(0,r) \setminus D) 
\le P^y(X(\tau_D) \in B^c(z,r/2)).
\end{equation*} 
By Lemma \ref{UBHP} and then   by    Lemma \ref{Exit_ball} we obtain
$$
P^y(X(\tau_D) \in B^c(z,r/2)) \le c \frac{E^y(\tau_D)}{\V^2(r/2)} \le c \frac{E^y(\tau_{B(0,r)})}{\V^2(r)}\le c\frac{\V(\delta(y))}{\V(r)} ,
$$
where $c = c(d)$. 
\end{proof}

\begin{lemma}
\label{tildeGreen} For any $0<r\le 1$, $h \in (0,r/16)$, $x = h e_1$, $B = B(0,r)$ and $y \in B_+ \setminus B(0,r/4)_+$ we have
$$
 \tilde{G}_{B_+}(x,y) \le \frac{c h \V(\delta(y)) \V(r)}{r^{d+1}},
$$
where $\delta(y) = \delta_B(y)$.
\end{lemma}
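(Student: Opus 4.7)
The plan is to split into two cases based on the size of $|y|$ and combine Proposition \ref{Greenhatest} with the ring estimate Lemma \ref{ring}.

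For $|y| \le 3r/4$, we have $\delta(y) = r - |y| \ge r/4$, so by the scaling property (\ref{psi_star}) of $\V$, $\V(\delta(y)) \ge c_1 \V(r)$. Since $h < r/16$, the hypothesis $|y| \ge 4h$ of Proposition \ref{Greenhatest} is met, and we obtain
\[
\tilde{G}_{B_+}(x,y) \le c |x-\hat{x}| \frac{\V^2(|y|)}{|y|^{d+1}} \le c' h \frac{\V^2(r)}{r^{d+1}} \le c'' h \frac{\V(\delta(y)) \V(r)}{r^{d+1}},
\]
which is the required bound.

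For $|y| > 3r/4$, I would apply the strong Markov property of $\tilde X$ at $\tau := \tau_{R_+}$, where $R := B(0,r) \setminus \overline{B(0,r/2)}$. Because $x \notin R_+$, the function $\tilde G_{B_+}(x,\cdot)$ is harmonic for $\tilde X$ on $R_+$. Using that $R_+$ is Lipschitz (Corollary \ref{boundarytilde3}) and that $\tilde G_{B_+}(x,\partial)=0$, this gives $\tilde G_{B_+}(x,y) = \tilde E^y[\tilde G_{B_+}(x,\tilde X_\tau);\, \tilde X_\tau \in B(0,r/2)_+]$. Dominating by $X$ via (\ref{domin}) and then applying the Ikeda-Watanabe formula (\ref{IW}) bounds this by
\[
\int_{B(0,r/2)_+}\!\left( \int_R G_R(y,w)\,\nu(w-z)\, dw \right)\tilde G_{B_+}(x,z)\, dz.
\]
I would then split the $z$-integration at $|z| = r/4$. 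On the region $r/4 < |z| < r/2$, Proposition \ref{Greenhatest} applies (since $|z| \ge r/4 > 4h$) and gives $\tilde G_{B_+}(x,z) \le c h \V^2(r)/r^{d+1}$ pointwise; the total mass of the landing-density over this region is dominated by $P^y(X_{\tau_R} \in B(0,r/2)) \le c \V(\delta(y))/\V(r)$ thanks to Lemma \ref{ring}, producing the claimed bound for this part.

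The main obstacle I foresee is the inner region $|z| \le r/4$. There $|w-z| \ge r/4$ for $w \in R$, so (\ref{Levymeasureestimates}) and Lemma \ref{Exit_ball} give the uniform density bound $c \V(\delta(y))/(\V(r) r^d)$; pairing this with the crude bound $\int_{B(0,r/4)_+} \tilde G_{B_+}(x,z)\,dz \le E^x \tau_B \le c \V^2(r)$ yields only $c \V(\delta(y)) \V(r)/r^d$, which is short by a factor $h/r$. To recover this factor one must use the $|z|$-weighted integral bound from Lemma \ref{Greenintegral} — exploiting that on the inner region the integrand comes with an extra geometric factor of order $|z| \le r/4$ from the interplay between $\nu(w-z)$ and $\tilde G_{B_+}(x,z) = G_B(x,z) - G_B(\hat x,z)$, the latter carrying the factor $|x - \hat x| = 2h$ from the Lagrange/Theorem \ref{dertransition} representation.
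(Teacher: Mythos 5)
Your decomposition mirrors the paper's: both treat the annulus $r/4\le|y|\le 3r/4$ directly via Proposition \ref{Greenhatest}, and both handle $|y|>3r/4$ by harmonicity of $\tilde G_{B_+}(x,\cdot)$ with respect to $\tilde X$, first exiting the ring $R_+$ and then splitting the landing set at $|z|=r/4$. Your first case and your outer sub-case ($r/4<|z|<r/2$, using Proposition \ref{Greenhatest} plus Lemma \ref{ring}) are fine and match the paper's term ``I''.

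The gap is in the inner sub-case $|z|\le r/4$, and the fix you sketch is built on a misdiagnosis. Once you dominate the $\tilde X$-harmonic measure by the $X$-harmonic measure via (\ref{domin}) and pass to the $X$-Ikeda--Watanabe density $\int_R G_R(y,w)\nu(w-z)\,dw$, the factor of order $|z|$ is irretrievably gone: that density is essentially flat for $z\in B(0,r/4)_+$, and $\nu(w-z)$ has no special cancellation against $\tilde G_{B_+}(x,z)$. The $|z|$-factor does \emph{not} come from an ``interplay'' between $\nu$ and $\tilde G_{B_+}$; it comes from the difference kernel itself. The paper instead keeps the $\tilde X$-Ikeda--Watanabe formula (\ref{tildeIW}), whose kernel is $\tilde\nu(v,z)=\nu(v-z)-\nu(\hat v-z)$, and then applies Corollary \ref{ABLevyquotient} to get $\tilde\nu(v,z)\le c|z|/(r^{d+1}\V^2(r))$ for $v\in R_+$, $z\in B(0,r/4)_+$. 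That is precisely the $|z|$-weight that makes Lemma \ref{Greenintegral} applicable and produces the $h$ from $\int_{B_+}|z|\tilde G_{B_+}(x,z)\,dz\le c|x|E^x\tau_B$. So you should not replace $\tilde\nu$ by $\nu$ on the inner region; do that replacement only for the Green-function factor $\tilde G_{R_+}(y,v)\le G_R(y,v)$, keep $\tilde\nu$, bound it by $c|z|/(r^{d+1}\V^2(r))$, and then Lemma \ref{Greenintegral} together with $E^y\tau_R\le c\V(\delta(y))\V(r)$ and $E^x\tau_B\le c\V^2(r)$ gives the required $ch\V(\delta(y))\V(r)/r^{d+1}$.
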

\begin{proof}
Let us denote $R = B \setminus \overline{B(0,r/2)}$. By Proposition \ref{Greenhatest} we get
$$
\tilde{G}_{B_+}(x,y) \le \frac{c h \V(\delta(y)) \V(r)}{r^{d+1}},\quad y\in \overline{B(0,3r/4)} \setminus \overline{B(0,r/4)}.
$$
Hence may assume that $y \in B_+ \setminus B(0,3r/4)_+$.  Since $\tilde{G}_{B_+}(x,\cdot)$ is harmonic in $B_+\setminus\{x\}$ with respect to $\tilde{X}$  we have
\begin{eqnarray*}
\tilde{G}_{B_+}(x,y) &=&
\tilde{E}^y\left(\tilde{G}_{B_+}\left(x,\tilde{X}(\tau_{R_+})\right)\right) \\
&=&
\tilde{E}^y\left(\tilde{G}_{B_+}\left(x,\tilde{X}(\tau_{R_+})\right), \tilde{X}(\tau_{R_+}) \in B_+ \setminus (R_+ \cup B(0,r/4)_+)\right) \\
&& + \tilde{E}^y\left(\tilde{G}_{B_+}\left(x,\tilde{X}(\tau_{R_+})\right), \tilde{X}(\tau_{R_+}) \in B(0,r/4)_+\right) = \text{I} + \text{II}.
\end{eqnarray*}
 Since $ B_+ \setminus (R_+ \cup B(0,r/4)_+$ satisfies the assumptions of Corollary \ref{boundarytilde3} we can  apply (\ref{domin}) to obtain 
\begin{eqnarray*}
\text{I} &\le& \sup_{z \in B_+ \setminus (R_+ \cup B(0,r/4)_+)} \tilde{G}_{B_+}(x,z) \tilde{P}^y\left( \tilde{X}(\tau_{R_+}) \in B_+ \setminus (R_+ \cup B(0,r/4)_+)\right) \\
&\le& \sup_{z \in B_+ \setminus (R_+ \cup B(0,r/4)_+)} \tilde{G}_{B_+}(x,z) P^y\left( {X}_{\tau_{R}} \in B \setminus R\right).
\end{eqnarray*}
By Proposition  \ref{Greenhatest} and Lemma \ref{ring} this is bounded from above by
$
c h \V(\delta(y)) \V(r) r^{-d-1}
$.
By the Ikeda-Watanabe formula for $\tilde{X}$ (\ref{tildeIW})  we get
\begin{equation*}
\label{IIring}
\text{II} = \int_{R_+} \tilde{G}_{R_+}(y,v) \int_{B(0,r/4)_+} \tilde{\nu}(v,z) \tilde{G}_{B_+}(x,z) \, dz \, dv. 
\end{equation*}
Furthermore, by Corollary \ref{ABLevyquotient}  we have for $v \in R_+$, $z \in B(0,r/4)_+$,
$$
\tilde{\nu}(v,z) \le c |z-\hat{z}| \frac{1}{|v-z|^d(1\wedge|v-z|)\V^2(|v-z|)} \le \frac{c |z|}{r^{d+1} \V^2(r)}.
$$
 This combined with Lemma \ref{Greenintegral} and the   estimates of $E^x(\tau_B)$ from Lemma \ref{Exit_ball}  yields  %(\ref{IIring}) is bounded from above by
\begin{eqnarray*}
\text{II}&\le& \frac{c}{r^{d+1} \V^2(r)} \int_R G_R(y,v) \, dv \int_{B_+} |z| \tilde{G}_{B_+}(x,z) \, dz \\
&\le& \frac{c h}{r^{d+1} \V^2(r)} E^y(\tau_B) E^x(\tau_B)
\le \frac{c h \V(\delta(y)) \V(r)}{r^{d+1}}.
\end{eqnarray*}
\end{proof}

\section{Proof of the main theorem}
Throughout this section we will assume that the process $X$ satisfies the assumptions (A).
The following proposition is the key step in proving gradient estimates of harmonic functions for L{\'e}vy processes.
\begin{proposition}
\label{Lipschitz}
 Let $0<r <1/4$, $h \in (0,r/16)$, $x = h e_1$. Assume that $f:\R^d \to [0,\infty)$ is harmonic in $B(0,4r)$ with respect to $X$. Then we have
$$
f(x) - f(\hat{x}) \le c \frac{h f(0)}{r}.
$$
\end{proposition}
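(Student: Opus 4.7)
The plan is to derive from the harmonicity of $f$ on the ball $B := B(0, r) \subset B(0, 4r)$ an integral representation
\begin{equation*}
f(x) - f(\hat x) = \int_{B_+} \tilde G_{B_+}(x, y)\, [g(y) - g(\hat y)]\, dy, \qquad g(y) := \int_{B^c} f(z)\,\nu(y - z)\,dz,
\end{equation*}
and then estimate the two factors separately: $\tilde G_{B_+}(x, y)$ is of order $h = |x - \hat x|/2$ by Proposition~\ref{Greenhatest} and Lemma~\ref{tildeGreen}, while $g(y) - g(\hat y)$ is Lipschitz-controlled via Lemma~\ref{Lipschitzg}. Since $B$ is a Lipschitz set, the Ikeda--Watanabe formula gives $f(x) = \int_{B^c} f(z) \int_B G_B(x, y)\, \nu(y - z)\, dy\, dz$, and analogously for $f(\hat x)$. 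Subtracting, splitting the $y$-integral into $B_+$ and $B_-$, and using the reflection symmetries $G_B(\hat x, \hat y) = G_B(x, y)$ and $\nu(\hat y - z) = \nu(y - \hat z)$ together with a change of variables $z \mapsto \hat z$, yields the displayed identity (the $B_-$-portion of the $y$-integration recombines with the $z$-integration to produce the mirror image $g(\hat y)$).

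Bound $f(x) - f(\hat x) \le \int_{B_+} \tilde G_{B_+}(x, y)\, |g(y) - g(\hat y)|\, dy$ and split this integral into $A_1 := B_+ \cap B(0, r/4)$ and $A_2 := B_+ \setminus A_1$. On $A_1 \subset B(0, r/2)$, applying Lemma~\ref{Lipschitzg} at the center $0$ and using the triangle inequality gives $|g(y) - g(\hat y)| \le c\, g(0)\, |y|/r$; combined with Lemma~\ref{Greenintegral} (which yields $\int_{B_+} \tilde G_{B_+}(x, y)\, |y|\, dy \le c\, h\, \V^2(r)$), this bounds the $A_1$-contribution by $c\, h\, g(0)\, \V^2(r)/r$. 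To replace $g(0)$ by $f(0)$, use the harmonic representation $f(0) = \int_B G_B(0, y)\, g(y)\, dy$, the Harnack-type lower bound $g(y) \ge c^{-1} g(0)$ on $B(0, r/2)$ from Lemma~\ref{Lipschitzg}, and $\int_{B(0, r/2)} G_B(0, y)\, dy \ge E^0 \tau_{B(0, r/2)} \ge c\, \V^2(r)$ from Lemma~\ref{Exit_ball}; this gives $g(0) \le c\, f(0)/\V^2(r)$ and hence an $A_1$-contribution of at most $c\, h\, f(0)/r$.

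For $A_2$, the estimate $\tilde G_{B_+}(x, y) \le c\, h\, \V(\delta_B(y))\, \V(r)/r^{d+1}$ from Lemma~\ref{tildeGreen} is combined with a matching lower bound $G_B(0, y) \ge c\, \V(\delta_B(y))\, \V(r)/r^{d}$ for $y \in A_2$ (available from standard Green-function estimates for isotropic unimodal Lévy processes on balls, via the scale-invariant Harnack inequality (H2)); together these produce the pointwise comparison $\tilde G_{B_+}(x, y) \le c\,(h/r)\, G_B(0, y)$ on $A_2$. Using $|g(y) - g(\hat y)| \le g(y) + g(\hat y)$ together with the symmetry $y \mapsto \hat y$,
\begin{equation*}
\int_{A_2} \tilde G_{B_+}(x, y)\, |g(y) - g(\hat y)|\, dy \le \frac{c\, h}{r} \int_B G_B(0, y)\, g(y)\, dy = \frac{c\, h}{r}\, f(0),
\end{equation*}
the final equality being the harmonic representation at $0$. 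Summing the $A_1$ and $A_2$ contributions completes the proof. The main technical obstacle is establishing the lower bound on $G_B(0, y)$ on the annular region $A_2$ with the correct dependence on $\delta_B(y)$; once this is in place, via the Harnack inequality together with the potential-theoretic estimates developed in the paper and its references, the estimates combine cleanly.
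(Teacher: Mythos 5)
Your integral representation $f(x)-f(\hat x)=\int_{B_+}\tilde G_{B_+}(x,y)\,(g(y)-g(\hat y))\,dy$ and the treatment of the inner region $A_1=B(0,r/4)_+$ match the paper's term $\mathrm{I}$: Lemma~\ref{Lipschitzg} for $|g(y)-g(\hat y)|\le c\,g(0)|y|/r$, Lemma~\ref{Greenintegral} for $\int_{B_+}\tilde G_{B_+}(x,y)|y|\,dy$, and a Harnack/exit-time argument to trade $g(0)$ for $f(0)$. That part is sound (the paper does the same, just keeping $g$ under the $G_B(x,\cdot)$-integral rather than pulling out $g(0)\lesssim f(0)/\V^2(r)$, which is equivalent).

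The gap is in your treatment of the outer annulus $A_2=B_+\setminus B(0,r/4)_+$. You invoke a pointwise lower bound $G_B(0,y)\ge c\,\V(\delta_B(y))\V(r)/r^d$ for $y$ near $\partial B$ and describe it as ``available from standard Green-function estimates \ldots via the scale-invariant Harnack inequality.'' That bound is \emph{not} a consequence of (H2): the scale-invariant Harnack inequality controls ratios of a harmonic function at comparable interior points, but degenerates near the boundary, so it cannot propagate the lower bound $G_B(0,y_0)\gtrsim\V^2(r)/r^d$ (valid at some interior $y_0$) out to $\delta_B(y)\to 0$. What would be needed is a boundary Harnack principle or a matching lower bound on $E^y\tau_B$ of the form $\V(\delta_B(y))\V(r)$; the paper's Lemma~\ref{Exit_ball} gives only the \emph{upper} bound $E^y\tau_B\le C_1\V(\delta_B(y))\V(r)$ and a lower bound only for $y\in B(0,r/2)$. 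Under the general assumptions (A) (which include, e.g., geometric stable processes, via (A3)) no such Green-function lower bound is established in this paper or its cited references, so your comparison $\tilde G_{B_+}(x,y)\le c(h/r)G_B(0,y)$ on $A_2$ is unjustified. The paper avoids this entirely by splitting the $z$-integral in $g(y)-g(\hat y)$ over $A_2$ into the tail part $z\in B^c(0,2r)$ (treated like $\mathrm{I}$, applying Lemma~\ref{Lipschitzg} to $g_1(y)=\int_{B(0,2r)^c}f\,\nu(y-\cdot)$) and the near part $z\in B(0,2r)\setminus B$ (term $\mathrm{III}$, treated via Harnack on $f$, the crude bound $\int_{B(0,2r)\setminus B}\nu(y-z)\,dz\le R(\delta_B(y))$, Lemma~\ref{tildeGreen}, and the integral inequality $\int_0^r\V(\rho)R(\rho)\,d\rho\le Cr/\V(r)$ from \cite[Proposition~3.5]{BGR2013_1}). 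That last inequality is the substitute for the Green-function lower bound you are implicitly assuming, and it is the missing ingredient in your argument.
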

\begin{proof}
Put $B = B(0,r)$. For $y \in B$ put $g(y) = \int_{B^c} f(z) \nu(y-z) \, dz$. By harmonicity of $f$ and the Ikeda-Watanabe formula (\ref{IW}) we have $f(x) = \int_B G_B(x,y) g(y) \, dy$. Observe $g(y) < \infty$ a.e. on $B$. We have 
\begin{eqnarray*}
f(x) - f(\hat{x}) 
&=& \int_{B_+} (G_B(x,y) - G_B(\hat{x},y)) g(y) \, dy 
+ \int_{B_-} (G_B(x,y) - G_B(\hat{x},y)) g(y) \, dy \\
&=& \int_{B_+} (G_B(x,y) - G_B(\hat{x},y)) g(y) \, dy
+ \int_{B_+} (G_B(x,\hat{y}) - G_B(\hat{x},\hat{y})) g(\hat{y}) \, dy \\
&=& \int_{B_+} \tilde{G}_{B_+}(x,y) (g(y) - g(\hat{y})) \, dy.
\end{eqnarray*}
Hence $f(x) - f(\hat{x})$ is equal to
\begin{eqnarray*} 
&& 
 \int_{B(0,r/4)_+ } \tilde{G}_{B_+}(x,y) (g(y) - g(\hat{y})) \, dy \\
&& + \int_{B_+ \setminus B(0,r/4)_+} \tilde{G}_{B_+}(x,y) \int_{B^c(0,2r)} f(z) (\nu(y-z) - \nu(\hat{y}-z))\, dz \, dy \\
&& + \int_{B_+ \setminus B(0,r/4)_+} \tilde{G}_{B_+}(x,y) \int_{B(0,2r)\setminus B} f(z) (\nu(y-z) - \nu(\hat{y}-z)) \, dz \, dy \\
&=& \text{I} + \text{II} + \text{III}. 
\end{eqnarray*}

{By Lemma \ref{Lipschitzg} for $y \in B(0,r/4)_+$ we obtain $$|g(y) - g(\hat{y})| \le |g(y) - g(0)|+| g(\hat{y})-g(0)|\le c|y| r^{-1} g(0).$$} Lemma \ref{Greenintegral} and the above inequality yield
$$
\text{I} \le \frac{c g(0)}{r} \int_{B(0,r/4)_+} \tilde{G}_{B_+}(x,y)|y|\, dy \le \frac{c|x| g(0)}{r}\int_{B(0,r/4)} {G}_{B}(x,y)\, dy.
$$
Moreover, using again Lemma \ref{Lipschitzg}, we have $g(0)\le cg(y),\, y\in B(0,r/4)$, hence

$$
\text{I}\le \frac{c|x| }{r}\int_{B(0,r/4)} {G}_{B}(x,y)g(y)\, dy\le \frac{c|x|f(x) }{r}.
$$

$\text{II}$ will be estimated similarly like $\text{I}$. For $y \in B$ put $g_1(y) = \int_{B(0,2r)^c} f(z) \nu(y-z) \, dz\le g(y)$. By Lemma \ref{Lipschitzg} applied to $g_1$ we obtain 
\begin{equation*}
\label{IIformula}
\text{II} \le \frac{c g(0)}{r}\int_{B_+ \setminus B(0,r/4)_+ } \tilde{G}_{B_+}(x,y) |y| \, dy.
\end{equation*}
Repeating the same steps as used to estimate $\text{I}$ we obtain
\begin{equation*}
\text{II} \le \frac{c|x|f(x) }{r}.
\end{equation*}

%Note that for $y \in B_+ \setminus B(0,r/4)_+$ we have $|y| \le 2 |y - x|$.

Finally we estimate  $\text{III}$.  By the assumed  Harnack inequality  we obtain
\begin{eqnarray}
\nonumber
\text{III} &\le& c f(x) \int_{B_+ \setminus B(0,r/4)_+} \tilde{G}_{B_+}(x,y) \int_{B(0,2r)\setminus B}  \nu(y-z) \, dz \, dy \\
\label{IV1}
&\le& c f(x) \int_{B_+ \setminus B(0,r/4)_+} \tilde{G}_{B_+}(x,y) \int_{B^c(y,\delta(y))}  \nu(y-z) \, dz \, dy,
\end{eqnarray}
where $\delta(y) = \delta_B(y)$. Denote $R(\rho)= \int_{B^c(0,\rho)} \nu(x)dx$. 

By Lemma \ref{tildeGreen} we obtain that (\ref{IV1}) is bounded from above by 
\begin{eqnarray*}
%\label{IV2}
&&\frac{c h f(x) \V(r)}{r^{d+1}} \int_{B_+ \setminus B(0,r/4)_+} \V(\delta(y)) R(\delta(y))\, dy\\
&=& \frac{c h f(x) \V(r)}{r^{d+1}} \int_{r/4}^{r} \rho^{d-1} \V(r-\rho) R(r-\rho) \, d\rho\nonumber\\
&\le&\frac{c h f(x) \V(r)}{r^{2}} \int_{0}^{r}  \V(\rho) R(\rho) \, d\rho\nonumber\\
&\le&\frac{c h f(x) }{r}\nonumber
\end{eqnarray*}
Here in the last step we used the estimate    $\int_{0}^{r}  \V(\rho) R(\rho) \, d\rho\le C \frac{r }{\V(r)}$ from \cite[Proposition 3.5]{BGR2013_1} .
 This gives that $\text{III}$ is bounded from above by $c h f(x)/r$.

Finally we obtain $\text{I} + \text{II} + \text{III}  \le  c h f(x)/r$. Using again the Harnack inequality we get $f(x) \le cf(0)$.

\end{proof}

\begin{lemma}\label{derivative_1} Let $|x|<r<|y|$. Then 
\begin{equation*}
\left|\frac{\partial}{\partial x_1} p_{t}(x-y)\right|\le 6 \pi
\left( r p^{(d+2)}_{t}(r-|x|)+ |y|p^{(d+2)}_{t}(|y|/2) \right) .
\end{equation*}
\end{lemma}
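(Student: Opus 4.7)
The plan is to differentiate the radial transition density via the identity from Theorem~\ref{dertransition}, bound $|x_1-y_1|$ crudely by $|x|+|y|$, and then split the two resulting terms using the radial monotonicity of $p_t^{(d+2)}$ together with the hypotheses $|x|<r<|y|$.

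First, since $p_t(x-y)=p_t(|x-y|)$, the chain rule gives
\begin{equation*}
\frac{\partial}{\partial x_1}p_t(x-y)=\frac{d}{d\rho}p_t(\rho)\Big|_{\rho=|x-y|}\cdot\frac{x_1-y_1}{|x-y|},
\end{equation*}
and the identity (\ref{derivativeptr}) from Theorem~\ref{dertransition} rewrites $\tfrac{d}{d\rho}p_t(\rho)=-2\pi\rho\, p_t^{(d+2)}(\rho)$. Combining these,
\begin{equation*}
\left|\frac{\partial}{\partial x_1}p_t(x-y)\right|=2\pi|x_1-y_1|\,p_t^{(d+2)}(|x-y|)\le 2\pi(|x|+|y|)\,p_t^{(d+2)}(|x-y|).
\end{equation*}

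Next I would handle the $|x|$-piece uniformly. Since $|y|>r$ we have $|x-y|\ge |y|-|x|\ge r-|x|>0$, and because $p_t^{(d+2)}$ is radially nonincreasing (Theorem~\ref{dertransition}), together with $|x|<r$, this yields
\begin{equation*}
|x|\,p_t^{(d+2)}(|x-y|)\le r\,p_t^{(d+2)}(r-|x|).
\end{equation*}

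For the $|y|$-piece I would split into two cases. If $|x|\le|y|/2$, then $|x-y|\ge|y|/2$ and monotonicity immediately gives $|y|\,p_t^{(d+2)}(|x-y|)\le|y|\,p_t^{(d+2)}(|y|/2)$. If instead $|x|>|y|/2$, then $|y|<2|x|<2r$ while still $|x-y|\ge r-|x|$, hence $|y|\,p_t^{(d+2)}(|x-y|)\le 2r\,p_t^{(d+2)}(r-|x|)$. In either case
\begin{equation*}
|y|\,p_t^{(d+2)}(|x-y|)\le 2r\,p_t^{(d+2)}(r-|x|)+|y|\,p_t^{(d+2)}(|y|/2).
\end{equation*}

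Adding the two bounds and multiplying by $2\pi$ produces
\begin{equation*}
\left|\frac{\partial}{\partial x_1}p_t(x-y)\right|\le 6\pi\, r\, p_t^{(d+2)}(r-|x|)+2\pi|y|\,p_t^{(d+2)}(|y|/2),
\end{equation*}
which is dominated by the claimed right-hand side since $2\pi\le 6\pi$. There is no real obstacle here: the entire argument is a direct application of (\ref{derivativeptr}) and monotonicity, the only mildly delicate point being the case split to control $|y|\,p_t^{(d+2)}(|x-y|)$ when $x$ is comparable to $y$, for which the constraint $r<|y|$ is essential so that $r-|x|$ remains a legitimate (positive) argument of $p_t^{(d+2)}$.
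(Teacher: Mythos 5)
Your proof is correct and uses exactly the same ingredients as the paper's: the identity from Theorem~\ref{dertransition} to express the derivative via $p_t^{(d+2)}$, radial monotonicity of $p_t^{(d+2)}$, and an elementary case analysis; the only cosmetic difference is that you bound $|x_1-y_1|\le|x|+|y|$ and split on $|x|\lessgtr|y|/2$, whereas the paper bounds $|x_1-y_1|\le|x-y|$ and splits on $|y|\lessgtr 2r$, which is equivalent bookkeeping.
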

\begin{proof} Since $r-|x|\le  |x-y|\le 3r$ for $|y|\le2r$, and  $|y|/2\le  |x-y|\le 2|y|$ for $|y|>2r $,  by Theorem \ref{dertransition} and radial monotonicity of $p_t^{(d+2)}$, 
we obtain
$$
\left| \frac{\partial}{\partial x_1} p_{t}(x-y) \right| = 
2 \pi |x_1 - y_1| p_t^{(d+2)}(|x-y|) \le 6 \pi r p_t^{(d+2)}(r-|x|)+ 4 \pi |y| p^{(d+2)}_{t}(|y|/2).
$$

 \end{proof}
 
 We define  $$r_{B}(t,x,y)= E^x(p_{t - \tau_D}(X(\tau_D),y), \, t > \tau_D), \quad x,y \in D, \, t > 0.$$
 Recall that $ p_D(t,x,y) = p_t(x-y) - r_{B}(t,x,y)$.
  
\begin{lemma}
\label{rB}
 For any $r \in (0,1]$, $B = B(0,r)$, $t > 0$, $x,y \in B$ we have
$$
\left|\frac{\partial}{\partial x_1} r_{B}(t,x,y)\right| \le f_t(\delta(x),y),
$$
where $f_t:(0,r] \times B \to (0,\infty)$ is a Borel function and $\delta(x)=\delta_B(x)$. For each fixed $t > 0$, $y \in B$ we have $f_t(a,y) \nearrow$ when $a \searrow$ and for each $a \in (0,r]$, $y \in B$ we have $\int_0^{\infty} f_t(a,y) \, dt < \infty$. For each fixed $t > 0$, $ a \in (0,r]$ we have $\int_B f_t(a,y) \, dy < \infty$. For each fixed $a \in (0,r]$ we have $\int_0^{\infty} \int_B f_t(a,y) \, dy \, dt < \infty$.
\end{lemma}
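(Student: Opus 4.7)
The key identity is the symmetry $r_B(t,x,y)=r_B(t,y,x)$, which follows from $p_B(t,x,y)=p_B(t,y,x)$ and the radiality of $p_t$. Combining it with \eqref{IW2} and \eqref{hformula} (using \eqref{Xboundary}, valid since $B=B(0,r)$ is Lipschitz) gives the representation
\[
r_B(t,x,y)\;=\;\int_0^{t}\!\!\int_{B}\!\!\int_{B^c} p_{t-s}(z-x)\,p_B(s,y,w)\,\nu(w-z)\,dz\,dw\,ds,
\]
in which all $x$-dependence sits in $p_{t-s}(z-x)$. For $x\in B$ and a.e.\ $z\in B^c$ one has $|x|<r<|z|$, and Theorem \ref{dertransition} makes $p_{t-s}$ differentiable in $x_1$ with $\partial_{x_1}p_{t-s}(z-x)=-2\pi(z_1-x_1)p_{t-s}^{(d+2)}(|z-x|)$.

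\textbf{Differentiating and defining $f_t$.} I would differentiate under the triple integral, justifying the interchange by dominated convergence: for $x'$ in the set $\{|x'-x|<\delta(x)/2\}$ one has $r-|x'|\ge\delta(x)/2$, and Lemma \ref{derivative_1} then provides an $x'$-independent dominator. Applying Lemma \ref{derivative_1} with $r-|x|=\delta(x)=a$ suggests the definition
\[
f_t(a,y)\;:=\;6\pi\int_0^{t}\!\!\int_{B}\!\!\int_{B^c}\bigl[\,r\,p_{t-s}^{(d+2)}(a)\,+\,|z|\,p_{t-s}^{(d+2)}(|z|/2)\,\bigr]\,p_B(s,y,w)\,\nu(w-z)\,dz\,dw\,ds,
\]
so that $|\partial_{x_1}r_B(t,x,y)|\le f_t(\delta(x),y)$. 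Monotonicity $f_t(a,y)\nearrow$ as $a\searrow 0$ is immediate from the radial monotonicity of $p^{(d+2)}$ supplied by Theorem \ref{dertransition}.

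\textbf{Integrability.} The three remaining finiteness claims all reduce to the Ikeda--Watanabe identity
$
\int_{B} G_B(u,w)\int_{B^c}\nu(w-z)\,dz\,dw=P^u(X(\tau_B)\in B^c)\le 1
$
coming from \eqref{IW}. For $\int_0^\infty f_t(a,y)\,dt$, the swap $\int_0^\infty dt\int_0^t ds=\int_0^\infty ds\int_s^\infty dt$ with $u=t-s$ replaces $\int_0^{t}p_{t-s}^{(d+2)}(\rho)\,ds$ by $U^{(d+2)}(\rho)$ and $\int_0^\infty p_B(s,y,w)\,ds$ by $G_B(y,w)$, reducing everything to the identity above; for the $|z|\,U^{(d+2)}(|z|/2)$ piece I would additionally use $\rho U^{(d+2)}(\rho/2)\le c\V^2(\rho)/\rho^{d+1}$ (as in the proof of Proposition \ref{Greenhatest}) together with the at-most-quadratic growth of $\V^2$ from \eqref{psi_star}, which makes $\rho U^{(d+2)}(\rho/2)$ bounded on $[r,\infty)$. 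For $\int_{B}f_t(a,y)\,dy$ at fixed $t$, I would use the uniform bound $p_u^{(d+2)}(\rho)\le c/\rho^{d+2}$ (from $\int p_u^{(d+2)}\le 1$ and radial monotonicity) to remove the $s$-dependence in the bracket, then integrate $y$ out via $\int_{B}p_B(s,y,w)\,dy=P^w(\tau_B>s)\le 1$, and finally integrate $s$ using $\int_0^t P^w(\tau_B>s)\,ds\le E^w\tau_B=\int_{B}G_B(u,w)\,du$ to invoke Ikeda--Watanabe with an auxiliary variable $u\in B$. The joint integral $\int_0^\infty\!\!\int_{B}f_t\,dy\,dt$ is handled by the same arguments combined.

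\textbf{Main obstacle.} The subtle part is the integrability, not the representation or the differentiation. A naive estimate produces $\int_B R(\delta(w))\,dw$ with $R(\rho)=\int_{|u|\ge\rho}\nu(u)\,du$, which diverges for many L\'evy measures (e.g.\ $\alpha$-stable with $\alpha\ge 1$). The cure is always to keep the tail integral $\int_{B^c}\nu(w-z)\,dz$ paired with a Green-function factor $G_B(u,w)$ so that the Ikeda--Watanabe identity collapses the pair to a probability at most $1$.
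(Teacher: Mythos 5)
Your proposal is correct and follows essentially the same route as the paper: you exploit the symmetry $r_B(t,x,y)=r_B(t,y,x)$ to place all $x$-dependence inside the kernel $p_{t-s}(\cdot-x)$, differentiate using Theorem~\ref{dertransition} via Lemma~\ref{derivative_1}, define $f_t$ as the resulting dominator, and reduce the $t$-integrability to the potential kernel $U^{(d+2)}$. The only cosmetic difference is bookkeeping: the paper keeps the $E^y$-expectation form and bounds $\rho\,U^{(d+2)}(\rho)$ directly via \cite[Theorem~16]{G2013} and the scaling \eqref{psi_star}, whereas you unfold it through the Ikeda--Watanabe triple integral and collapse the Green-function--L\'evy-tail pair to a probability bounded by $1$; both close the same gap.
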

\begin{proof}
We have
\begin{equation}
\label{derivativerB}
\left|\frac{\partial}{\partial x_1} r_{B}(t,x,y)\right|
= \left|\frac{\partial}{\partial x_1} E^y\left[p_{t - \tau_B}(x - X(\tau_B)), t > \tau_B\right]\right|.
\end{equation}
Applying Lemma \ref{derivative_1} we obtain
\begin{eqnarray*}\left|\frac{\partial}{\partial x_1} \left[p_{t - \tau_B}(x - X(\tau_B))\right]\right|&\le& 
 6\pi r p^{(d+2)}_{t - \tau_B}(r-|x|)+  6\pi|X(\tau_B)|p^{(d+2)}_{t- \tau_B}(|X(\tau_B)|/2)).
 \end{eqnarray*}
 Moreover by Corollary \ref{derestimates},
 \begin{eqnarray*}\left|\frac{\partial}{\partial x_1} \left[p_{t - \tau_B}(x - X(\tau_B))\right]\right|&\le& \frac c{(r-|x|)^{d+1}}.\end{eqnarray*}
It follows that we can change the order of $\frac{\partial}{\partial x_1}$ and $E^y$ in (\ref{derivativerB}). We have also shown that  
$$
\left|\frac{\partial}{\partial x_1} r_{B}(t,x,y)\right| \le f_t(\delta(x),y),
$$
where
\begin{eqnarray*}
f_t(a,y) &=& 6\pi E^y\left[ r p_{t - \tau_B}^{(d+2)}(a), t > \tau_B\right] 
 + 6 \pi  E^y\left[ |X(\tau_B)| p_{t - \tau_B}^{(d+2)}(|X(\tau_B)|/2), t > \tau_B\right].
\end{eqnarray*}
Of course, $f_t:(0,r] \times B \to (0,\infty)$, $f_t$ is a Borel function, for each fixed $t > 0$, $y \in B$ we have $f_t(a,y) \nearrow$ when $a \searrow$. We also have
\begin{eqnarray*}
\int_0^{\infty} f_t(a,y) \, dt &=& 8\pi r E^y\left[ \int_{\tau_B}^{\infty} p_{t - \tau_B}^{(d+2)}(a) \, dt \right] \\
&& + 4 \pi  E^y\left[  |X(\tau_B)| \int_{\tau_B}^{\infty} p_{t - \tau_B}^{(d+2)}(|X(\tau_B)|/2) \, dt \right]\\
&=& 6\pi r U^{(d+2)}(a) +
6 \pi  E^y\left[ |X(\tau_B)| U^{(d+2)}(|X(\tau_B)|/2)\right]\\
&\le& 8\pi r U^{(d+2)}(a)+ 8\pi \sup_{\rho>r} \rho U^{(d+2)}(\rho).
\end{eqnarray*}

By \cite[Theorem 16]{G2013} and then by (\ref{psi_star}) this is bounded from above by 
$$
\frac{c r L^2(a)}{a^{d+2}} + \sup_{\rho>r}
 \frac{c L^2(\rho)}{\rho ^{d+1} } < \frac{c r L^2(a)}{a^{d+2}} + \sup_{\rho>r}
 \frac{c \frac{\rho^2}{r^2} L^2(r)}{\rho ^{d+1} } \le \frac{c r L^2(a)}{a^{d+2}} + 
 \frac{c  L^2(r)}{r ^{d+1} },
$$
where $c = c(d)$.  

 It follows that for each fixed $a \in (0,r]$ we have $\int_0^{\infty} \int_B f_t(a,y) \, dy \, dt < \infty$.
\end{proof}

By saying that $\frac{\partial f}{\partial x_i} (x)$ exists we understand that $\lim_{h \to 0} \frac{f(x + h e_i) - f(x)}{h}$ exists and is finite. 

\begin{lemma}
\label{Greenderivative}
 For any $r \in (0,1]$, $B = B(0,r)$, $x,y \in B$, $x \ne y$ there exists
$$
\frac{\partial}{\partial x_1} G_B(x,y)
$$
and
$$
\frac{\partial}{\partial x_1} G_B(x,y) = \int_0^{\infty} \frac{\partial}{\partial x_1} p_B(t,x,y) \, dt.
$$
\end{lemma}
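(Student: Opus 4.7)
The plan is to differentiate under the integral sign in the representation
$$ G_B(x,y) = \int_0^\infty p_B(t,x,y)\, dt = \int_0^\infty \bigl[p_t(x-y) - r_B(t,x,y)\bigr]\, dt, $$
handling the two pieces separately via dominated convergence applied to the difference quotient. Fix $x,y\in B$ with $x\ne y$, and let $\rho = |x-y|\wedge\delta(x) > 0$. For $|h|<\rho/4$, the mean value theorem gives $\theta_t \in (0,h)$ (measurable in $t$) with
$$ \frac{p_B(t,x+he_1,y)-p_B(t,x,y)}{h} = \frac{\partial}{\partial x_1}p_t\bigl(x+\theta_t e_1 -y\bigr) - \frac{\partial}{\partial x_1} r_B\bigl(t, x+\theta_t e_1, y\bigr). $$
The task is to produce $t$-integrable upper bounds for each term, uniform in $|h|<\rho/4$.

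For the free part, Theorem \ref{dertransition} gives
$$ \left|\frac{\partial}{\partial x_1} p_t(x+\theta_t e_1 - y)\right| = 2\pi|x_1+\theta_t - y_1|\,p_t^{(d+2)}\bigl(|x+\theta_t e_1 - y|\bigr) \le 2\pi\bigl(|x-y|+\rho/4\bigr)\,p_t^{(d+2)}(\rho/2), $$
where radial monotonicity of $p_t^{(d+2)}$ is used together with $|x+\theta_t e_1 - y|\ge \rho/2$. Since $X^{(d+2)}$ lives in $\R^{d+2}$ with $d+2\ge 3$ it is transient, and $\int_0^\infty p_t^{(d+2)}(\rho/2)\, dt = U^{(d+2)}(\rho/2) < \infty$ by Theorem \ref{dertransition} and the standing estimates recalled in Section 2. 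Hence the bound is integrable in $t$.

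For the killing part, Lemma \ref{rB} yields $|\tfrac{\partial}{\partial x_1}r_B(t,x',y)|\le f_t(\delta(x'),y)$ with $f_t(a,y)$ nonincreasing in $a$. For $|h|<\rho/4$ one has $\delta(x+\theta_t e_1)\ge \delta(x)-\rho/4\ge 3\delta(x)/4$, and therefore
$$ \left|\frac{\partial}{\partial x_1}r_B(t, x+\theta_t e_1, y)\right| \le f_t\bigl(3\delta(x)/4, y\bigr). $$
By Lemma \ref{rB} this dominating function is integrable in $t$. Combining, the integrand of the difference quotient is uniformly bounded in $|h|<\rho/4$ by a $t$-integrable function. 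As $h\to 0$, the integrand converges pointwise to $\tfrac{\partial}{\partial x_1}p_B(t,x,y)$ (the derivatives of $p_t(x-y)$ and $r_B(t,x,y)$ both existing in the classical sense by Theorem \ref{dertransition} and Lemma \ref{rB}). Dominated convergence then yields both existence of $\tfrac{\partial}{\partial x_1}G_B(x,y)$ and the claimed interchange of differentiation and integration.

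The step requiring the most care is the uniform bound: one must separately confirm $t$-integrability of the two dominating functions, which is precisely what Lemma \ref{rB} and the transience of $X^{(d+2)}$ in $\R^{d+2}$ provide. No further obstacle is expected, since all analytic inputs have been prepared in the preceding sections.
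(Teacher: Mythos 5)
Your proof is correct and follows essentially the same route as the paper: bound the two parts of $\frac{\partial}{\partial x_1}p_B(t,\cdot,y)$ by $t$-integrable majorants on a small ball around $x$, the free part via Theorem \ref{dertransition} and transience of $X^{(d+2)}$ (giving $U^{(d+2)}<\infty$), the killed part via the monotone dominating family $f_t(a,y)$ from Lemma \ref{rB}, and then apply dominated convergence to the difference quotient. The only cosmetic difference is your explicit choice $\rho=|x-y|\wedge\delta(x)$ with radius $\rho/4$ versus the paper's $s=\tfrac{|x-y|}{2}\wedge\tfrac{\delta(x)}{2}$; the argument is identical in substance.
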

\begin{proof}
Fix $t > 0$, $x,y \in B$, $x \ne y$ and put $s = \frac{|x-y|}{2} \wedge \frac{\delta(x)}{2}$. We will estimate $\frac{\partial}{\partial z_1} p_B(t,z,y)$ for $z \in B(x,s)$. For $z \in B(x,s)$ we have
$$
\frac{\partial}{\partial z_1} p_B(t,z,y) =
\frac{\partial}{\partial z_1} p_{t}(z-y) - \frac{\partial}{\partial z_1} r_B(t,z,y).
$$
We have
$$
\left| \frac{\partial}{\partial z_1} p_{t}(z-y) \right| = 
2 \pi |z_1 - y_1| p_t^{(d+2)}(|z-y|) \le 4 \pi r p_t^{(d+2)}(s)
$$
and
$$
\left|\frac{\partial}{\partial z_1} r_B(t,z,y) \right| \le
f_t(\delta(z),y) \le f_t(s,y),
$$
where $f_t$ is a function defined in Lemma \ref{rB}. We have $\int_0^{\infty}( p_t^{(d+2)}(s) + f_t(s,y)) \, dt < \infty$. This justifies the chage of the derivative and integral in $\frac{\partial}{\partial x_1} \int_0^{\infty} p_B(t,x,y) \, dt$ and implies the assertion of the lemma.
\end{proof}

\begin{proposition}
\label{derGreen}
 For any $r \in (0,1]$, $B = B(0,r)$, $x,y \in B$, $x \ne y$ we  have
$$
\frac{\partial}{\partial x_1} G_B(x,y) \le c \frac{G_B(x,y)}{|x-y| \wedge \delta(x)},
$$
where $\delta(y) = \delta_B(y)$.
\end{proposition}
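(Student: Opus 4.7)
The plan is to reduce the estimate to a single application of the Lipschitz-type bound of Proposition~\ref{Lipschitz}, after recentering that result at $x$ by means of the translation invariance of $X$.

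Fix $x, y \in B$ with $x \ne y$, and set $\rho := (|x - y| \wedge \delta(x))/c_0$ with $c_0 \ge 16$. Since $\delta(x) \le r \le 1$, this ensures $\rho \le 1/16 < 1/4$, and the choice of $c_0$ also guarantees $B(x, 4\rho) \subset B \setminus \{y\}$. Let $f(z) := G_B(x + z, y)$. Then $f \ge 0$, and because the law of $X$ is translation invariant, harmonicity of $G_B(\cdot, y)$ with respect to $X$ in $B(x, 4\rho)$ transfers to harmonicity of $f$ with respect to $X$ in $B(0, 4\rho)$.

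Now Proposition~\ref{Lipschitz}, applied to $f$ with the parameter $r$ replaced by $\rho$, gives for every $h \in (0, \rho/16)$
$$G_B(x + h e_1, y) - G_B(x - h e_1, y) \;=\; f(h e_1) - f(-h e_1) \;\le\; c\,\frac{h\, f(0)}{\rho} \;=\; c\,\frac{h\, G_B(x, y)}{\rho}.$$
Dividing by $2h$ yields a symmetric difference quotient, and by Lemma~\ref{Greenderivative} (which ensures that $\partial_{x_1} G_B(x, y)$ exists), this quotient converges to $\partial_{x_1} G_B(x, y)$ as $h \to 0^+$. Therefore
$$\frac{\partial}{\partial x_1} G_B(x, y) \;\le\; c\,\frac{G_B(x, y)}{2\rho} \;=\; c'\,\frac{G_B(x, y)}{|x - y| \wedge \delta(x)},$$
which is the desired bound.

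The only care required is in verifying the geometric preconditions ($\rho < 1/4$ and $B(x, 4\rho) \subset B \setminus \{y\}$, both secured by taking $c_0 \ge 16$) and in noting that translation invariance of the L{\'e}vy process $X$ preserves harmonicity of $G_B(\cdot,y)$ under the shift by $x$; neither of these is a substantive obstacle. All the genuine work---obtaining the one-sided Lipschitz control of a nonnegative harmonic function via the difference process, the Green-function estimate of Proposition~\ref{Greenhatest}, and the Harnack inequality---is already encapsulated in Proposition~\ref{Lipschitz}, so the remaining proof is a direct limiting argument.
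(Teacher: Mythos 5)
Your proof is correct and follows essentially the same route as the paper: both reduce the matter to a single application of Proposition~\ref{Lipschitz} via translation invariance and then pass to the limit using the existence of the derivative from Lemma~\ref{Greenderivative}. The one place you depart from the paper is a welcome small simplification: the paper centers the translated function at $x+\tfrac{h}{2}e_1$ and bounds the one-sided quotient $\bigl(G_B(x+he_1,y)-G_B(x,y)\bigr)/h$, which forces an extra continuity argument (choosing $\varepsilon$ so that $G_B(x+\tfrac{h}{2}e_1,y)\le 2G_B(x,y)$ for $h\le\varepsilon$); by centering at $x$ and using the symmetric quotient $\bigl(f(he_1)-f(-he_1)\bigr)/2h$, you get $f(0)=G_B(x,y)$ directly and avoid that step entirely. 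Your parameter bookkeeping (choice of $\rho$ and $c_0\ge 16$) is correct and in fact cleaner than the range of $h$ stated in the paper's proof, and the appeal to Lemma~\ref{Greenderivative} to upgrade the symmetric difference quotient to the derivative is exactly what is needed.
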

\begin{proof}
Fix $x,y \in B$, $x \ne y$. Let $s = |x - y| \wedge \delta(x)$. The function $z \to G_B(z,y)$ is harmonic (with respect to the process $X$) for $z \in B(x,s/2)$. By continuity of $z \to G_B(z,y)$, $z \ne y$ we obtain that there exists $\varepsilon > 0$ such that for $|x - z| < \varepsilon$ we have $G_B(z,y) \le 2 G_B(x,y)$. Let $h \in (0,\varepsilon \wedge (s/4))$. By Proposition \ref{Lipschitz} we have
\begin{equation}
\label{quotient}
\left|\frac{G_B(x + h e_1,y) - G_B(x,y)}{h}\right| \le c \frac{G_B\left(x + \frac{1}{2} h e_1,y\right)}{s}.
\end{equation}
Since $h \le \varepsilon$ we obtain $G_B\left(x + \frac{1}{2} h e_1,y\right) \le 2 G_B(x,y)$. Using this, (\ref{quotient}) and Lemma \ref{Greenderivative} we obtain the assertion of the lemma.
\end{proof}

\begin{lemma}
\label{dermeanexit}
 For any $R \in (0,1]$, $B = B(0,R)$, $x\in B$ the partial derivative 
$
\frac{\partial}{\partial x_1} E^x(\tau_B)
$
exists. Moreover it equals to $0$ for $x=0$. 
\end{lemma}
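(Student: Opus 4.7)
The plan is to differentiate $E^x(\tau_B)=\int_B G_B(x,y)\,dy$ in $x_1$ by interchanging with the integration in $y$. Pointwise, $\partial_{x_1}G_B(x,y)$ exists for $y\ne x$ by Lemma \ref{Greenderivative}, and Proposition \ref{derGreen} provides the estimate
$$
|\partial_{x_1}G_B(x,y)| \le c\,\frac{G_B(x,y)}{|x-y|\wedge\delta(x)}.
$$
Writing the difference quotient via the mean value theorem, I will dominate it uniformly for small $h$ by splitting the $y$-integration into a far region $B\setminus B(x,\delta(x)/4)$ and a near region $B(x,\delta(x)/4)$.

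On the far region, for $\xi$ close enough to $x$ both $|\xi-y|$ and $\delta(\xi)$ are of order $\delta(x)$; Proposition \ref{derGreen} gives $|\partial_{x_1}G_B(\xi,y)|\le c\,G_B(\xi,y)/\delta(x)$, and the Harnack inequality (H2), applied to the $X$-harmonic function $z\mapsto G_B(z,y)$ in a small ball around $x$ disjoint from $y$, yields $G_B(\xi,y)\le C\,G_B(x,y)$. Since $\int_B G_B(x,y)\,dy=E^x(\tau_B)\le C_1\V(R)\V(\delta(x))$ by Lemma \ref{Exit_ball}, this piece is controlled by an integrable dominating function and dominated convergence applies.

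For the near region, where the direct bound $c\,G_B(x,y)/|x-y|$ from Proposition \ref{derGreen} need not be $L^1(dy)$, I will use the strong Markov property at $\tau_{B(x,\sigma)}$ with $\sigma=\delta(x)/4$ to rewrite
$$
E^x(\tau_B) \;=\; E^0(\tau_{B(0,\sigma)}) \;+\; \int_{|w-x|>\sigma}g(w)\,k(w-x)\,dw,
$$
where $g(w):=E^w(\tau_B)$ is bounded on $\R^d$ (Lemma \ref{Exit_ball}) and $k(u)=\int_{B(0,\sigma)}G_{B(0,\sigma)}(0,v)\nu(|v-u|)\,dv$ is a fixed radial function independent of $x$ (Ikeda-Watanabe plus translation invariance). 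The first summand is $x$-independent; the second is differentiable in $x_1$ by Reynolds-type transport, with bulk contribution $-\int_{|w-x|>\sigma}g(w)(\partial_1 k)(w-x)\,dw$ and a finite surface contribution over $\partial B(x,\sigma)$. The pointwise bound $|\nu'(r)/\nu(r)|\le c/(r\wedge 1)$ from Lemma \ref{Levyquotient}, combined with the integrability of $G_{B(0,\sigma)}(0,\cdot)$, furnishes the estimates needed to make this rigorous.

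For the value at $x=0$, the isotropy of $X$ and rotational symmetry of $B=B(0,R)$ force $x\mapsto E^x(\tau_B)$ to be radial, so $E^{x_1e_1}(\tau_B)$ is an even function of $x_1$; since the partial derivative at $0$ exists by the above, it must coincide with its own negative and hence vanish. The main obstacle is controlling the near-region integration uniformly for $\xi$ near $x$; the pointwise bound from Proposition \ref{derGreen} is in general too crude there, and the strong-Markov decomposition above, which replaces the problematic local integration against the singular Green function by a smooth integral against the fixed radial kernel $k$, is the essential ingredient that resolves this.
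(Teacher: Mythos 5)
Your far-region treatment and the symmetry argument forcing the derivative to vanish at $x=0$ are both sound and parallel the paper. The problem is entirely in the near-region resolution, and the strong-Markov decomposition does not actually supply it.

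A minor point first: as written, $\sigma=\delta(x)/4$ depends on $x$, so neither $E^0(\tau_{B(0,\sigma)})$ nor $k$ is $x$-independent; this is repairable by freezing $\sigma$ at the fixed point $z$ where you differentiate. The substantive gap is that the kernel
$k(u)=\int_{B(0,\sigma)}G_{B(0,\sigma)}(0,v)\nu(|u-v|)\,dv$, $|u|>\sigma$, is the Poisson kernel of $B(0,\sigma)$ seen from its center and is in general \emph{not} bounded, let alone smooth, up to the sphere $\{|u|=\sigma\}$. For the $\alpha$-stable process it blows up like $(|u|-\sigma)^{-\alpha/2}$, and $|\partial_1 k(u)|$ blows up like $(|u|-\sigma)^{-1-\alpha/2}$, which is not locally integrable near $\{|u|=\sigma\}$. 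Thus the ``bulk'' term $-\int_{|w-x|>\sigma}g(w)\,\partial_1 k(w-x)\,dw$ is absolutely divergent, and the ``surface'' term requires evaluating $k$ on its singular set, so it too is infinite; the Reynolds-type decomposition reduces the difference quotient to an ill-defined $\infty-\infty$. The bound $|\nu'(r)|\le c\,\nu(r)/(r\wedge 1)$ from Lemma \ref{Levyquotient} does not cure this: fed through $\int_{B(0,\sigma)}G_{B(0,\sigma)}(0,v)|\nu'(|u-v|)|\,dv$ and combined with the boundary decay of $G_{B(0,\sigma)}(0,\cdot)$, it precisely reproduces the $(|u|-\sigma)^{-1-\alpha/2}$ singularity rather than giving an integrable dominating function. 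So while the identity $E^x(\tau_B)=E^0(\tau_{B(0,\sigma)})+\int_{|w-x|>\sigma}g(w)k(w-x)\,dw$ is correct, it does not make the near region differentiable under the integral sign; you have replaced the singular Green function by a kernel that is just as singular at the moving boundary.

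The paper's proof sidesteps this by a structurally different near-region decomposition. It fixes a ball $B(z,2r)$ (not centered at the moving $x$) and writes $\int_{B(z,2r)}G_B(x,y)\,dy = \int_0^\infty(g_t(x)-h_t(x))\,dt$ with $g_t(x)=\int_{B(z,2r)}p_t(x-y)\,dy$. The crucial step is the identity $\partial_{x_1}p_t(x-y)=-2\pi(x_1-y_1)p_t^{(d+2)}(x-y)$ from Theorem \ref{dertransition}: by the radial symmetry of $p_t^{(d+2)}$, the integral of $(x_1-y_1)p_t^{(d+2)}(x-y)$ over any ball $B(x,r)$ centered at $x$ vanishes identically, so $\partial_{x_1}g_t(x)$ reduces to an integral over $B(z,2r)\setminus B(x,r)$ that stays a distance $\ge r$ from the diagonal. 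This symmetry cancellation, for which your approach has no analogue, is exactly what produces the uniform integrable domination in the near region (together with Lemma \ref{rB} for $h_t$, and an approximation step adding a small independent Cauchy component to cover unbounded $p_t^{(d+2)}$). Without such a cancellation mechanism, the near-region exchange of limit and integral is not justified.
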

\begin{proof}  For the   first part of the proof consider the case when $p_t^{(d+2)}(0)= \|p_t^{(d+2)}\|_{\infty}<\infty$, where $\|\cdot\|_{\infty}$ denotes the supremum norm.
Fix $z \in B$ and put $4r = \delta_B(z)$. We will show that for any $x \in B(z,r)$, $\frac{\partial}{\partial x_1} E^x(\tau_B)$ exists. 

For any $t > 0$, $x \in B(z,r)$ put $g_t(x) = \int_{B(z,2r)} p_t(x-y) \, dy$. For any $t > 0$, $x \in B(z,r)$, $y \in B(z,2r)$ we have
$$
\left| \frac{\partial}{\partial x_1} p_t(x-y)\right| = 
2 \pi \left|(x_1 - y_1) p_t^{(d+2)}(x-y)\right| \le 6\pi r \|p_t^{(d+2)}\|_{\infty} < \infty.
$$
It follows that for any $t > 0$, $x \in B(z,r)$ we have
$$
\frac{\partial g_t}{\partial x_1}(x) = 
\int_{B(z,2r)} \frac{\partial p_t}{\partial x_1}(x-y) \, dy
= -2 \pi \int_{B(z,2r)} (x_1 - y_1) p_t^{(d+2)}(x-y) \, dy.
$$
In particular, $\frac{\partial g_t}{\partial x_1}(x)$ exists for any $x \in B(z,2r)$. 

For any $t > 0$, $x \in B(z,r)$ we also have
$$
\frac{\partial g_t}{\partial x_1}(x) 
= -2 \pi \int_{B(x,r)} (x_1 - y_1) p_t^{(d+2)}(x-y) \, dy
-2 \pi \int_{B(z,2r) \setminus B(x,r)} (x_1 - y_1) p_t^{(d+2)}(x-y) \, dy.
$$
By radial symmetry of $p_t^{(d+2)}$ the first integral vanishes, hence finally 
\begin{equation}\label{derivative_g}
\frac{\partial g_t}{\partial x_1}(x) = 
-2 \pi \int_{B(z,2r) \setminus B(x,r)} (x_1 - y_1) p_t^{(d+2)}(x-y) \, dy.
\end{equation}

To remove the assumption that $p_t^{(d+2)}(0)= \|p_t^{(d+2)}\|_{\infty}<\infty$ we consider   the process with the symbol $\psi(\xi)+\epsilon| \xi|, \, \epsilon>0$, that is we add to $X$ an independent Cauchy process multiplied by $\epsilon>0$. This new process satisfies all the assumptions needed in Theorem \ref{dertransition}  to construct its $d+2$-dimensional corresponding variant. Moreover this $d+2$ - dimensional process  has uniformly bounded transition densities for each $t$. Hence we can repeat all the above steps and then pass with $\epsilon\to 0$ to arrive at (\ref{derivative_g}) in this case. The passage is easily justyfied by observing that the integrand in (\ref{derivative_g}) is bounded by $3r p_t^{(d+2)}(r)$. We leave the details to the Reader.

Hence for any 
$t > 0$, $x \in B(z,r)$ we have
\begin{eqnarray*}
\left|\frac{\partial g_t}{\partial x_1}(x)\right| &=&
2 \pi \left| \int_{B(z,2r) \setminus B(x,r)} (x_1 - y_1) p_t^{(d+2)}(x-y) \, dy \right| \\
&\le& 6 \pi   \int_{B(x,3r) \setminus B(x,r)} |x-y| p_t^{(d+2)}(x-y) \, dy\\
%&=& 6 \pi r \omega_{d-1} \int_r^{3r} \rho^{d-1} p_t^{(d+2)}(\rho) \, d\rho\\
&\le& \frac{c}{r}  \int_r^{3r} \rho^{d+1} p_t^{(d+2)}(\rho) \, d\rho\\
&=& \frac{c}{r} \int_{B^{*}(0,3r) \setminus B^{*}(0,r)} p_t^{(d+2)}(y) \, dy,
\end{eqnarray*}
where $c = c(d)$ and $B^{*}(0,u) = \{y \in \R^{d+2}:\, |y| < u\}$, $u > 0$.

Now for any $t > 0$, $x \in B(z,r)$ put $h_t(x) = \int_{B(z,2r)} r_B(t,x,y) \, dy$. By Lemma \ref{rB} for any $t > 0$, $x \in B(z,r)$, $y \in B(z,2r)$ we have
$$
\left| \frac{\partial}{\partial x_1} r_B(t,x,y)\right| \le f_t(\delta(x),y) \le f_t(3r,y).
$$
For any $t > 0$ Lemma \ref{rB} also gives $\int_B f_t(3r,y) \, dy < \infty$. Hence for any $t > 0$, $x \in B(z,r)$ we have
$$
\frac{\partial h_t}{\partial x_1}(x) = 
\int_{B(z,2r)} \frac{\partial}{\partial x_1} r_B(t,x,y) \, dy
$$
and
$$
\left| \frac{\partial h_t}{\partial x_1}(x) \right| \le 
\int_{B(z,2r)} f_t(3r,y) \, dy < \infty.
$$
Now for any $x \in B(z,r)$ put
\begin{equation}
\label{Arep}
A(x) = \int_{B(z,2r)} G_B(x,y) \, dy
= \int_0^{\infty} (g_t(x) - h_t(x)) \, dt.
\end{equation}
Now for any $t > 0$, $x \in B(z,r)$ we have
$$
\left| \int_0^{\infty} \left(\frac{\partial g_t}{\partial x_1}(x) - \frac{\partial h_t}{\partial x_1}(x)\right) \, dt \right|
\le \frac{c}{r} \int_{B^{*}(0,3r) \setminus B^{*}(0,r)} p_t^{(d+2)}(y) \, dy
+ \int_{B(z,2r)} f_t(3r,y) \, dy.
$$
Next, %By \cite[Proposition 13]{G2013}
$$
\frac{c}{r} \int_0^{\infty} \int_{B^{*}(0,3r) \setminus B^{*}(0,r)} p_t^{(d+2)}(y) \, dy \, dt =\frac{c}{r}\int_{B^{*}(0,3r) \setminus B^{*}(0,r)}U^{(d+2)}(y)\, dy <\infty.
$$
By Lemma \ref{rB} we have
$$
\int_0^{\infty} \int_{B(z,2r)} f_t(3r,y) \, dy \, dt < \infty.
$$
Using this and (\ref{Arep}) we obtain that $\frac{\partial}{\partial x_1} A(x)$ exists for any $x \in B(z,r)$.

Now for any $x \in B(z,r)$ put 
$$
B(x) = \int_{B \setminus B(z,2r)} G_B(x,y) \, dy.
$$
By Proposition \ref{derGreen} for any $x \in B(z,r)$, $y \in B \setminus B(z,2r)$  we get
$$
\left| \frac{\partial}{\partial x_1} G_B(x,y)\right| \le c \frac{G_B(x,y)}{r} \le c \frac{G_B(0,y)}{r},
$$
where the last step follows by applying the Harnack inequality to $G_B(\cdot,y)$.    Since $G_B(0,y)$ is integrable we obtain that 
 $\frac{\partial}{\partial x_1} B(x)$ exists for any $x \in B(z,r)$.  Finally the function $x\to E^x(\tau_B)$ is symmetric, so its partial derivative exists  at  $x=0$ and must be equal to $0$.  The proof is completed. 
\end{proof}

\begin{proposition}
\label{existence}
  Let $f:\R^d \to [0,\infty)$ be harmonic in an open nonempty set $D \subset \R^d$. Then $\frac{\partial f}{\partial x_i}(x)$ exists for any $i = 1,\ldots,d$ and $x \in D$.
\end{proposition}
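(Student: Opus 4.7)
The plan is to prove the existence of $\partial f/\partial x_1$ at a single fixed point, and then to invoke translation invariance of $X$ (together with its isotropy, which makes all directions equivalent) to conclude existence at every $x \in D$ and every $i \in \{1,\dots,d\}$. Fix the point to be $0 \in D$, choose $R \in (0, 1/4)$ with $B := B(0, 4R) \subset D$, and use the harmonicity of $f$ together with the Ikeda--Watanabe formula (\ref{IW}) on the Lipschitz ball $B$ (via (\ref{Xboundary})) to obtain the representation
\begin{equation*}
f(x) = \int_B G_B(x, y) g(y)\, dy, \qquad g(y) := \int_{B^c} f(z)\, \nu(y - z)\, dz, \qquad x \in B.
\end{equation*}
As in the proof of Proposition \ref{Lipschitz}, Lemma \ref{Lipschitzg} yields that $g$ is bounded on $B(0, 2R)$ and satisfies the Lipschitz bound $|g(y) - g(0)| \le c\, g(0)\, |y|/R$ for $y \in B(0, R)$. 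I split $f(x) = I(x) + J(x)$, where $I(x) = \int_{B(0, 2R)} G_B(x, y) g(y)\, dy$ and $J(x) = \int_{B \setminus B(0, 2R)} G_B(x, y) g(y)\, dy$, and show that both $\partial I/\partial x_1(0)$ and $\partial J/\partial x_1(0)$ exist.

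The derivative of $J$ at $0$ is obtained by dominated convergence. For $x \in B(0, R/2)$ and $y \in B \setminus B(0, 2R)$ one has $|x - y| \wedge \delta_B(x) \ge R$, so Proposition \ref{derGreen} gives $|\partial_{x_1} G_B(x, y)| \le c\, G_B(x, y)/R$, and the Harnack inequality (H2) applied to the $X$-harmonic function $x \mapsto G_B(x, y)$ on $B(0, R)$ then gives the uniform bound $|\partial_{x_1} G_B(x, y)| \le c\, G_B(0, y)/R$. Since $\int_B G_B(0, y) g(y)\, dy = f(0) < \infty$ serves as an integrable dominating function, combining this with Lemma \ref{Greenderivative} yields the existence of $\partial J/\partial x_1(0)$.

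The derivative of $I$ is the delicate part, and the plan is to adapt the proof of Lemma \ref{dermeanexit} with the constant integrand $1$ replaced by the bounded Lipschitz weight $g \cdot 1_{B(0, 2R)}$. Writing
\begin{equation*}
I(x) = \int_0^\infty [\tilde g_t(x) - \tilde h_t(x)]\, dt, \quad \tilde g_t(x) := \int_{B(0, 2R)} p_t(x - y) g(y)\, dy, \quad \tilde h_t(x) := \int_{B(0, 2R)} r_B(t, x, y) g(y)\, dy,
\end{equation*}
the $\tilde h_t$-contribution is controlled exactly as in Lemma \ref{dermeanexit} via Lemma \ref{rB} (using $\|g\|_\infty < \infty$ on $B(0, 2R)$). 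For $\tilde g_t$, Theorem \ref{dertransition} gives $\partial_{x_1} p_t(x - y) = -2\pi(x_1 - y_1) p_t^{(d+2)}(|x - y|)$, and the key ``radial cancellation'' trick extends to a non-radial weight as follows: for $x \in B(0, R)$ we have $B(x, R) \subset B(0, 2R)$, and writing $g(y) = g(x) + (g(y) - g(x))$ on $B(x, R)$, the $g(x)$-part contributes zero by odd symmetry in $y_1 - x_1$, while the Lipschitz remainder $|g(y) - g(x)| \le c|y - x|$ gives a $t$-integrable bound
\begin{equation*}
\int_0^\infty \int_0^R \rho^{d+1} p_t^{(d+2)}(\rho)\, d\rho\, dt = \int_0^R \rho^{d+1} U^{(d+2)}(\rho)\, d\rho \le c \int_0^R \frac{L^2(\rho)}{\rho}\, d\rho < \infty,
\end{equation*}
where the last step uses \cite[Theorem 16]{G2013} together with the vanishing $L^2(\rho)/\rho^2 \to 0$ as $\rho \to 0$ from \cite[Lemma 2.5]{BGR2013_1}. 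The complementary piece over $B(0, 2R) \setminus B(x, R)$ is handled as in Lemma \ref{dermeanexit} since $|x - y| \ge R$ there avoids the singularity of $p_t^{(d+2)}$. This supplies the integrable dominating bound needed to differentiate under the integral, yielding $\partial I/\partial x_1(0)$. The principal obstacle is precisely this extension of the radial-cancellation argument from the radial integrand $1$ of Lemma \ref{dermeanexit} to the non-radial weight $g$; the Lipschitz estimate from Lemma \ref{Lipschitzg} supplies the extra factor of $|y - x|$ needed to tame the near-origin singularity of $p_t^{(d+2)}$ after integrating in $t$.
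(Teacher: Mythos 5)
Your decomposition differs structurally from the paper's: you split the Green representation into near/far parts in the $y$-variable ($B(0,2R)$ vs.\ $B\setminus B(0,2R)$) and re-derive a \emph{weighted} version of Lemma \ref{dermeanexit} for the near part, with the cancellation carried out at the moving center $x$. The paper instead writes $g = g(z)\cdot 1_B + u$ (a constant plus the Lipschitz remainder $u=g-g(z)$), invokes Lemma \ref{dermeanexit} verbatim for the constant part, and treats $G_B u$ purely at the Green-function level (small ball $B(z,2|h|)$ with the Lipschitz bound on $u$, complement via Proposition \ref{derGreen} and dominated convergence). The paper's route is cleaner precisely because it never needs a fresh dominating estimate for $\int_{B(x,R)}|x-y|^2 p_t^{(d+2)}(|x-y|)\,dy$ after $t$-integration.

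That estimate is where your argument has a concrete error. You justify $\int_0^R \rho^{d+1}U^{(d+2)}(\rho)\,d\rho<\infty$ via the chain
$\int_0^R \rho^{d+1} U^{(d+2)}(\rho)\, d\rho \le c\int_0^R L^2(\rho)/\rho\,d\rho<\infty$, citing ``$L^2(\rho)/\rho^2\to 0$'' from \cite[Lemma 2.5]{BGR2013_1}. That lemma (and the paper's display (\ref{0_Levy})) actually says $\rho^2/\V^2(\rho)\to 0$, i.e.\ $L^2(\rho)/\rho^2\to\infty$ --- you have the quotient inverted. Moreover $\int_0^R L^2(\rho)/\rho\,d\rho$ is genuinely \emph{divergent} for some processes covered by assumptions (A): for the geometric stable process ($\phi(\lambda)=\log(1+\lambda^{\beta/2})$, see Example 7.2) one has $L^2(\rho)\asymp 1/\log(1/\rho)$, so $\int_0 L^2(\rho)/\rho\,d\rho = \int_0 d\rho/(\rho\log(1/\rho))=\infty$. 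So your dominating bound as written fails. The quantity you actually need, $\int_0^R\rho^{d+1}U^{(d+2)}(\rho)\,d\rho$, is indeed finite, but for a different reason: $U^{(d+2)}$ is the potential kernel of a transient process in $\R^{d+2}$ (with $d+2\ge 3$), hence locally integrable; equivalently \cite[Proposition 2]{G2013} as invoked in Lemma \ref{Greenintegral}. The one-sided bound $U^{(d+2)}(\rho)\le cL^2(\rho)/\rho^{d+2}$ from \cite[Theorem 16]{G2013} is simply too lossy to pass through $L^2(\rho)/\rho$. Two further points you gloss over: the Lipschitz estimate $|g(y)-g(x)|\le c|y-x|$ at a moving base point $x\ne 0$ is not a direct quotation of Lemma \ref{Lipschitzg} (which compares to the ball's center) and needs its proof rerun with $x$ in place of $w$; and the justification of differentiation under the $y$-integral in $\tilde g_t$ requires the boundedness/approximation step of Lemma \ref{dermeanexit} (adding an $\varepsilon$-Cauchy component to ensure $p_t^{(d+2)}(0)<\infty$, then passing to the limit), which your sketch omits.
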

\begin{proof}
The proof resembles to some extent the proof of Lemma 4.3 in \cite{K2013}. We may assume that $i = 1$. 
Fix $z \in D$ and choose $r \in (0,1]$ such that $B(z,3r) \subset D$. Put $B = B(z,r)$. Then we have 
$$
f(x) = \int_B G_B(x,y) \int_{B^c} f(w) \nu(y-w) \, dw \, dy, \quad \quad x \in B.
$$
Put 
$$
g(y) = \int_{B^c} f(w) \nu(y - w) \, dw, \quad \quad y \in B
$$
and $u(y) = g(y) - g(z)$, $y \in B$. We have $f(x) = G_Bg(x)$. By Lemma \ref{Lipschitzg} we obtain
\begin{equation}
\label{uest}
|u(y)| \le \frac cr |y - z| g(z), \quad \quad y \in B(z,r/2).
\end{equation}
 Let $h \in (-r/8,r/8)$. We have
\begin{eqnarray*}
G_Bg(z+h e_1) - G_Bg(z) &=& (G_B 1_B(z + h e_1) - G_B 1_B(z)) g(z)\\
&& + G_B u(z+ h e_1) - G_B u(z).
\end{eqnarray*}
By  Lemma \ref{dermeanexit} we get
$$
\lim_{h \to 0} \frac{1}{h} (G_B 1_B(z + h e_1) - G_B 1_B(z)) g(z) = 
g(z) \frac{\partial}{\partial z_1} G_B 1_B(z) = 0.
$$
We also have 
\begin{eqnarray*}
&& \frac{1}{h} (G_B u(z+ h e_1) - G_B u(z))
= \int_B \frac{1}{h} (G_B (z+ h e_1,y) - G_B (z,y))u(y) \, dy\\
&& = \int_{B(z,2|h|)} + \int_{B \setminus B(z,2|h|)}  = \text{I} + \text{II}.
\end{eqnarray*}
By (\ref{uest}) and next by the bounded convergence theorem
\begin{eqnarray*}
|\text{I}| &\le& c g(z) \int_{B(z,2|h|)} \left(G_B(z+ h e_1,y) + G_B(z, y)\right) \, dy.\\
&\le& c g(z) \int_{B(0,3|h|)} G_{B(0,2r)}(0,y) \, dy\to 0,\quad h \to 0.
\end{eqnarray*}

 Applying Proposition \ref{derGreen}   we have for any $y \in B \setminus B(z,2|h|)$,
\begin{eqnarray*} \left|\frac{1}{h} (G_B u(z+ h e_1) - G_B u(z))\right|&=& \left|\frac{\partial G_B}{\partial z_1}(z + h \theta e_1,y)\right|\\&\le& c \frac{G_B(z + h \theta e_1,y)}{|z-y| \wedge r}\\
&\le& c \frac{G_B(z ,y)}{|z-y| \wedge r},\end{eqnarray*}
where $0\le \theta\le 1$ and the last inequality follows from the Harnack principle. Next we show that  $ \frac{G_B(z ,y)}{|z-y| \wedge r}|u(y)|$ is intgrable over $B$. By (\ref{uest}), for $y \in B(z,r/2)$, we have 
$$ \frac{G_B(z ,y)}{|z-y| \wedge r}|u(y)|\le  c\frac{G_B(z ,y)}{r|z-y| }g(z) |z-y|= c \frac{G_B(z ,y)}{r}g(z),$$
while for $y \in B\setminus  B(z,r/2)$ we obtain 
$$ \frac{G_B(z ,y)}{|z-y| \wedge r}|u(y)|\le  \frac{G_B(z ,y)}{|z-y| \wedge r}(g(z)+g(y))\le  2\frac{G_B(z ,y)}{r}(g(z)+g(y)).$$

Of course, $y \to  G_B(z,y)g(y) =  G_B(z,y) \int_{B^c} f(w) \nu(y-w) \, dw$ is an integrable function on $B$.
This implies 
$$
\lim_{h \to 0} \text{II} = \int_{B } \frac{\partial}{\partial z_1}G_B(z,y) u(y) \, dy
$$
and finishes the proof of the proposition.
\end{proof}
\section{Examples}

The processes in the first 3 examples are subordinate Brownian motions in $\R^d$, i.e. $X_t = B_{S_t}$ where $B$ is the Brownian motion in $\R^d$ (with a generator $\Delta$) and $S$ is an independent subordinator with the Laplace exponent $\phi$.
 
\begin{example} We assume that the Levy measure of the subordinator $S$  is infinite, $\phi$ is a complete Bernstein function and it satisfies
$$
c_1 \lambda^{\alpha/2} \ell(\lambda) \le \phi(\lambda) \le c_2 \lambda^{\alpha/2} \ell(\lambda), \quad \quad \lambda \ge 1,
$$
where $0 < \alpha < 2$, $\ell$ varies slowly at infinity, i.e. $\forall x > 0$ $\lim_{\lambda \to \infty} \frac{\ell(\lambda x)}{\ell(\lambda)} = 1$.
The process $X$ satisfies assumptions (A).

In particular, one of the processes satisfying the above conditions is {\it{the relativistic process}} in $\R^d$ with the Laplace exponent $\phi(\lambda) = \sqrt{\lambda + m^2} - m$, $m > 0$ and a generator $m - \sqrt{m^2 - \Delta}$, (see \cite{C1989}, \cite{R2002}, \cite{KS2006}). The generator of this process is called the relativistic Hamiltonian and it is used in some models of mathematical physics (see e.g. \cite{LS2010}).
\end{example}
\begin{proof}
It is clear that assumptions (H4), (H10) are satisfied. The fact that (H7) holds it is stated in Example 4 in \cite{KM2012}. Hence assumptions (A3) are satisfied.
\end{proof}

\begin{example}
Let $\phi(\lambda) = \log(1 + \lambda^{\beta/2})$, $\beta \in (0,2]$. The process $X$ is called {\it{the geometric stable processes}} and it satisfies assumptions (A).
\end{example}
\begin{proof}
One can directly check that (H10) is satisfied (see also Example 1 in \cite{KM2012}), the fact that (H7) holds is well known, (H4) is obvious. Hence assumptions (A3) are satisfied.
\end{proof}

\begin{example}
Let $\phi(\lambda) = \frac{\lambda}{\log(1+\lambda)} - 1$. The process $X$ is sometimes called {\it{the conjugate to the variance gamma process}} and it satisfies assumptions (A).
\end{example}
\begin{proof} (H0) is clear, (H2) follows from \cite{M2013} (for $d \ge 3$) and \cite[Example 3]{G2013} ($d \ge 1$). (H1) is implied by two conditions which hold for the density of the L\'evy measure of the subordinator $\nu_S$ (see the proof of Proposition 3.5 \cite{KimSongVondracek2012}),
\begin{itemize}
\item[(a)] For any $K>0$ there is $c=c(K)$ such that 
 $$\nu_S(r)\le c \nu_S(2r), \ 0<r<K.$$
\item[(b)] There exists $C$ such that 
$$\nu_S(r)\le C \nu_S(r+1), \ r\ge 1.$$
\end{itemize}
From the estimates of $\nu_S(r)$ obtained in \cite{M2013} we infer that (a) holds, while (b) is implied by the fact that $\phi(\lambda)$ is a complete Bernstein function \cite[Lemma 2.1 ]{KimSongVondracek2012}.

\end{proof}

The process in the next example is not a subordinate Brownian motion.
\begin{example} Let $\{X_t\}$ be the pure-jump isotropic L{\'e}vy process in $\R^d$ with the L{\'e}vy measure $\nu(dx) = \nu(|x|) \, dx$ given by the formula
\[ \nu(r) = \left\{              
\begin{array}{ll}  \mathcal{A}_{d,\alpha} r^{-d-\alpha}& \text{for} \quad r \in (0,1]\\
c_1 e^{-c_2 r} & \text{for} \quad r \in (1,\infty)          \end{array}       
\right. \]
where $\mathcal{A}_{d,\alpha} r^{-d-\alpha}$ is the L{\'e}vy density  for the symmetric $\alpha$-stable process in $\R^d$ with the characteristic exponent $\psi(x) = |x|^{\alpha}$, $\alpha \in (0,2)$ and $c_1 = \mathcal{A}_{d,\alpha} e^{d+\alpha} > 0$, $c_2= d + \alpha >0$ are chosen so that $\nu(r) \in C^1(0,\infty)$. $X$ satisfies assumptions (A).
\end{example}
\begin{proof}
(H0) is obvious and (H1) is easy to check. (WLSC) holds for $\psi(\xi) = \int_{\R^d} (1 - \cos\langle \xi,x \rangle) \, \nu(dx)$ because the characteristic exponent $\psi$ for $X$ behaves for large $\xi$ like the characteristic exponent for the symmetric $\alpha$-stable process. Hence (H3) holds, so assumptions (A1) are satisfied.
\end{proof}

Now we show an example of a harmonic function for some L{\'e}vy process for which the gradient does not exist at some point. The process is a pure-jump, isotropic unimodal L{\'evy} process, which L{\'e}vy measure does not satisfy the assumption that $-\nu'(r)/r$ is nonincreasing (cf. (H1)).
\begin{example}
Let $X$ be a pure-jump, L{\'evy} process in $\R$ which L{\'e}vy measure $\nu(dx) = \nu(x) \, dx$ has the density given by the formula
\[ \nu(x) = \left\{              
\begin{array}{ll}  \calA_{\alpha} |x|^{-1-\alpha} & \text{for} \quad |x| \in (0,1], \\
\calA_{\alpha} (1 - (|x| - 1)^{\gamma}) & \text{for} \quad |x| \in (1,2], \\
0 & \text{for} \quad |x| \in (2,\infty),          \end{array}       
\right. \] 
where $\alpha \in (0,1/2)$, $\gamma \in (1/2,1)$,  $\alpha + \gamma <1$, $\calA_{\alpha} |x|^{-1-\alpha}$ is the density of the L{\'e}vy measure for  $\alpha$-stable process in $\R$ with the characteristic exponent $\psi(x) = |x|^{\alpha}$.

Note that $\nu(x)$ satisfies $\nu(-x) = \nu(x)$, it is continuous and nonincreasing on $(0,\infty)$. It follows that the process $X$ is isotropic unimodal. 

Let $B = (-1/2,1/2)$ and let us define the function $f$ by
\[ f(z) = \left\{              
\begin{array}{ll} 
(z - 1)^{-\beta}  & \text{for} \quad z \in (1,2), \\
0 & \text{for} \quad z \notin (B \cup (1,2)), \\
E^z(f(X(\tau_B))) & \text{for} \quad z \in B,          \end{array}       
\right. \]
where $\beta \in (0,1)$, $\alpha - \beta + \gamma < 0$.

Then $f$ is harmonic on $B$ with respect to the process $X$ but $f'(0)$ does not exist.
\end{example}
\begin{proof}
Note that for any $y \in B_+$ we have $\tilde{P}^y(\tilde{X}(\tau_{B_+}) \in [1/2,5/2]) = 1$.
By the arguments used in the proof of Proposition \ref{Lipschitz} we get
\begin{equation}
\label{fdifference}
f(x) - f(-x) = \int_{B_+} \tilde{G}_{B_+}(x,y) (g(y) - g(-y)) \, dy,
\end{equation}
where $B_+ = (0,1/2)$, $x \in B_+$, $g(y) = \int_{B^c} \nu(y-z) f(z) \, dz$, $y \in \R$.
For $y \in B_+$ we have
\begin{eqnarray*}
g(y) &=&
\calA_{\alpha} \int_1^{1+y} (z-y)^{-1-\alpha} (z-1)^{-\beta} \, dz
+ \calA_{\alpha} \int_{1+y}^2 (1 - (z - y - 1)^{\gamma}) (z-1)^{-\beta} \, dz \\
&\ge& \calA_{\alpha} \int_1^{2} (z-1)^{-\beta} \, dz
- \calA_{\alpha} \int_{1+y}^2 (z - y - 1)^{\gamma} (z-1)^{-\beta} \, dz
\end{eqnarray*}
and
\begin{equation*}
g(-y) = \calA_{\alpha} \int_1^{2} (z-1)^{-\beta} \, dz
- \calA_{\alpha} \int_{1}^2 (z + y - 1)^{\gamma} (z-1)^{-\beta} \, dz.
\end{equation*}
Hence for $y \in B_+$ we have
\begin{equation}
\label{gdifference}
g(y) - g(-y) \ge \calA_{\alpha} \int_{1}^{1+y} (z + y - 1)^{\gamma} (z-1)^{-\beta} \, dz \ge c y^{1 - \beta + \gamma},
\end{equation}
where $c = c(\alpha,\beta,\gamma)$.

Now we need to use the inequality, which we justify later: 
\begin{equation}
\label{Greentildeineq}
\tilde{G}_{B_+}(x,y) \ge c_1 x y^{\alpha - 2} - c_2 x, \quad \quad x \in (0,1/16), \, \, y \in (2x,1/4),
\end{equation}
where $c_1 = c_1(\alpha, \gamma)$, $c_2 = c_2(\alpha, \gamma)$.
Using (\ref{fdifference}), (\ref{gdifference}) and (\ref{Greentildeineq}) we get for $x \in (0,1/16)$
$$
f(x) - f(-x) \ge c_1 x \int_{2x}^{1/4} y^{\alpha - \beta + \gamma - 1} \, dy - c_2 x
\ge c_1 x^{1 + \alpha - \beta + \gamma} - c_2 x,
$$
where $c_1 = c_1(\alpha,\beta,\gamma)$, $c_2 = c_2(\alpha,\beta,\gamma)$. By our assumptions on $\alpha$, $\beta$, $\gamma$ we obtain $1 + \alpha - \beta + \gamma \in (0,1)$. It follows that $f'(0)$ does not exist.

What remains is to prove (\ref{Greentildeineq}). By the definition of $\tilde{G}_{B_+}(x,y)$ we have
\begin{eqnarray}
\label{tildeGreen011}
\tilde{G}_{B_+}(x,y)
&\ge& \int_0^1 \tilde{p}_{B_+}(t,x,y) \, dt \\
\label{tildeGreen012}
&=& \int_0^1 \tilde{p}(t,x,y) - \tilde{E}^y\left( \tilde{p}(t-\tau_{B_+},x,\tilde{X}(\tau_{B_+})), \, \tau_{B_+} < t \right) \, dt. 
\end{eqnarray}
Let $\psi$ be the characteristic exponent of $X$. By the formula for $\nu$ we obtain that $\psi$ satisfies WLSC, so we can use Proposition \ref{dertransitionold}. By this proposition there exists a L{\'e}vy process in $\R^3$ with the characteristic exponent $\psi^{(3)}(\xi) = \psi(|\xi|)$, $\xi \in \R^3$ and the continuous transition density $p_t^{(3)}(x) = p_t^{(3)}(|x|)$, $x \in \R^3$, $t > 0$ satisfying $p_t^{(3)}(r) = (-1/(2\pi r)) p_t'(r)$, $r > 0$. It follows that
\begin{equation}
\label{tildeptxy}
\tilde{p}(t,x,y) = -2 x p'_t(y + \xi) = 4 \pi x (y + \xi)p_t^{(3)}(y + \xi),
\end{equation}
\begin{equation}
\label{tildeptxytau}
\tilde{p}(t-\tau_{B_+},x,\tilde{X}(\tau_{B_+})) = -2 x p'_{t - \tau_{B_+}}(\tilde{X}(\tau_{B_+}) + \xi) = 4 \pi x (\tilde{X}(\tau_{B_+}) + \xi) p_{t-\tau_{B_+}}^{(3)}(\tilde{X}(\tau_{B_+}) + \xi),
\end{equation}
where $\xi \in(-x,x)$.

Let $D \subset \R$ be a bounded, open, nonempty, symmetric ($D = -D$) set. For any $s > 0$, $y \in D_+$, $z \in \left((\overline{D})^c\right)_+$ put 
\begin{equation}
\label{hformula1}
h_{D_+}(y,s,z) = \int_{D_+} \tilde{p}_{D_+}(s,y,w) \tilde{\nu}(w-z) \, dw.
\end{equation}
By (\ref{tildeIW}) and standard arguments (see e.g. Proposition 2.5 in \cite{KS2006}) $h_{D_+}(y,s,z)$ provides the distribution of $(\tau_{D_+}, \tilde{X}(\tau_{D_+}))$ if the process $\tilde{X}$ starts from $y \in D_+$.

Note that 
\begin{equation*}
h_{D_+}(y,s,z) \le \sup_{w\in D_+}{\nu}(w-z)  \int_{D_+} \tilde{p}_{D_+}(s,y,w)  \, dy\le \sup_{w\in D_+}{\nu}(w-z),
\end{equation*}

\begin{eqnarray}
\label{new1}
&&\tilde{E}^y\left( \tilde{p}(t-\tau_{B_+},x,\tilde{X}(\tau_{B_+})), \, \tau_{B_+} < t, \, \tilde{X}(\tau_{B_+})>3/4 \right)\\ 
\label{new2}
&=& \int_0^t\int_{3/4}^{5/2} h_{D_+}(y,s,z)\tilde{p}(t-s,x,z) dzds \\
\label{new3}
& \le& c \int_0^t\int_{3/4}^{5/2}\tilde{p}(s,x,z) dzds =  c \int_0^t\int_{3/4}^{5/2}\int_{-x}^{x}\frac d{dw}{p}(s,w-z)dw dzds\\
\label{new4}
&=& c \int_0^t\int_{-x}^{x}\int_{3/4}^{5/2} (z-w){p}^{(3)}(s,w-z)dzdwds \le ctx,
\end{eqnarray}
where $c = c(\alpha,\gamma)$.

Let $Y$ be the symmetric $\alpha$-stable process in $\R^3$, with the L{\'e}vy measure $\nu^{(3)}_Y(dx) = \nu^{(3)}_Y(|x|)\, dx$ and the transition density $q_t^{(3)}(x)$. Now we need to use the inequality, which we justify later: 
\begin{equation}
\label{stable}
|p_t^{(3)}(y) - e^{-mt} q_t^{(3)}(y)| \le c, \quad \quad t \in (0,1), \, y \in B(0,13/16),  
\end{equation}
where $c = c(\alpha,\gamma)$, $m = m(\alpha,\gamma) \in (-\infty,\infty)$. It is well known that 
\begin{equation}
\label{stabletransition}
c_1 \min(t^{-3/\alpha},t |y|^{-3-\alpha}) \le q_t^{(3)}(y) \le c_2 \min(t^{-3/\alpha},t |y|^{-3-\alpha}), \quad t > 0, \, y \in \R^3,
\end{equation}
where $c_1 = c_1(\alpha)$, $c_2 = c_2(\alpha)$. 

By (\ref{tildeptxy}) and (\ref{stable}) we get for $x \in (0,1/16)$, $y \in  (2x,1/4)$, $t \in (0,1]$
\begin{equation}
\label{tildeptxy1}
\tilde{p}(t,x,y) \ge c_1 x y q_t^{(3)}(y) - c_2 x,
\end{equation}
where $c_1 = c_1(\alpha,\gamma)$, $c_2 = c_2(\alpha,\gamma)$.

By  (\ref{tildeptxytau}), (\ref{stable}) and (\ref{stabletransition}) we get for $x \in (0,1/16)$, $y \in  (2x,1/4)$, $t \in (0,1]$
\begin{equation}
\label{tildeptxytau1}
\tilde{E}^y\left( \tilde{p}(t-\tau_{B_+},x,\tilde{X}(\tau_{B_+})), \, \tau_{B_+} < t, \, \tilde{X}(\tau_{B_+})\le 3/4 \right) \le c x,
\end{equation}
where $c = c(\alpha,\gamma)$.

By (\ref{tildeGreen011} - \ref{tildeGreen012}), (\ref{tildeptxy1}), (\ref{new1} - \ref{new4}) and (\ref{tildeptxytau1}) we get for $x \in (0,1/16)$, $y \in  (2x,1/4)$
$$
\tilde{G}_{B_+}(x,y)
\ge c_1 x y \int_0^1 q_t^{(3)}(y) \, dt - c_2 x \ge c_1 x y^{\alpha -2} - c_2 x,
$$
where $c_1 = c_1(\alpha,\gamma)$, $c_2 = c_2(\alpha,\gamma)$. This gives (\ref{Greentildeineq}).

What remains is to show (\ref{stable}). Denote by $\nu^{(3)}(dx) = \nu^{(3)}(|x|)$ the L{\'e}vy meausure of $X^{(3)}$. By (\ref{derivativeptr}) for $r \in (0,\infty) \setminus \{1,2\}$ we get 
\begin{equation}
\label{nu3}
\nu^{(3)}(r) =
1_{(0,1)}(r) \nu_Y^{(3)}(y) - 1_{(1,2)}(r) \frac{\nu'(r)}{2 \pi r} = \nu_Y^{(3)}(y) + \mu(r),
\end{equation}
where $\supp(\mu) = [1,\infty)$, $\mu(r)$ has the singularity at $r = 1$ of the type $(r-1)^{\gamma -1}$ and $\mu(r)$ changes the sign on $(1,\infty)$. Put $\mu(x) = \mu(|x|)$, $x \in \R^3$, $m = \int_{\R^3} \mu(x) \, dx$, $M = \int_{\R^3} |\mu(x)| \, dx$. We have $M < \infty$, $m \in (-\infty,\infty)$. From (\ref{nu3}) it follows that for $t >0$
$$
p_t^{(3)} = q_t^{(3)}*\left(e^{-tm} \sum_{n = 0}^{\infty} \frac{t^n \mu^{(*n)}}{n!}\right)
$$
so
\begin{equation}
\label{p3q3}
p_t^{(3)}(x) = e^{-tm} \left(q_t^{(3)}(x) + q_t^{(3)} *  \sum_{n = 1}^{\infty} \frac{t^n \mu^{(*n)}}{n!}(x)\right), \quad x \in \R^3.
\end{equation}
By (\ref{stabletransition}) and the fact that $\supp(\mu) \subset B^c(0,1)$ we get $\sup_{x \in B(0,13/16)} |q_t^{(3)}*\mu(x)| \le M_1 < \infty$. Since $\gamma \in (1/2,1)$ we get that the function $\mu \in L^2(\R^3)$ so $||\mu^{(*2)}||_{\infty} \le M_2 < \infty$. It follows that $||\mu^{(*n)}||_{\infty} \le M_2 M^{n-2}$, $n \ge 2$. Hence for $t \in (0,1]$ and $x \in B(0,1/2)$ we have $\left| q_t^{(3)} *  \sum_{n = 1}^{\infty} \frac{t^n \mu^{(*n)}}{n!}(x) \right| \le M_1 + M_2 + M_2 \sum_{n = 2}^{\infty} \frac{t^n M^{n-2}}{n!} \le M_1 + M_2 + M_2 e^{M}$. This and (\ref{p3q3}) gives (\ref{stable}).
\end{proof}

{\bf{ Acknowledgements.}} We would like to thank  Z.-Q.
Chen and T. Grzywny for discussions on the problem
treated in this paper. T. Kulczycki is grateful for the hospitality of the Institute of Mathematics, Polish Academy of Sciences, the branch in Wroc{\l}aw, where part of this paper was written. 

Z.-Q. Chen informed us that he and his student T. Yang obtained independently gradient estimates of harmonic functions for $(-\Delta)^{\alpha/2} + a^{\beta} (-\Delta)^{\beta/2}$, $0 < \beta < \alpha < 2$, $a \in (0,1]$. Their estimates are uniform with respect to $a \in (0,1]$.

\end{document}